\definecolor{linkblue}{rgb}{0.1,0.1,0.8}
\renewcommand*{\backref}[1]{}
\renewcommand*{\backrefalt}[4]{
\ifcase #1 (Not cited.)
\or (Cited on page~#2.)
\else  (Cited on pages~#2.)
\fi
}
\newcommand{\ignore}[1]{}
\newtheorem{theorem}{Theorem}[section]
\newtheorem{lemma}[theorem]{Lemma}
\newtheorem{corollary}[theorem]{Corollary}
\newtheorem{proposition}[theorem]{Proposition}
\theoremstyle{definition}
\newtheorem{definition}[theorem]{Definition}
\newcommand{\secref}[1]{Section~\ref{#1}\xspace}
\newcommand{\thmref}[1]{Theorem~\ref{#1}\xspace}
\newcommand{\lemref}[1]{Lemma~\ref{#1}\xspace}
\newcommand{\propref}[1]{Proposition~\ref{#1}\xspace}
\newcommand{\corref}[1]{Corollary~\ref{#1}\xspace}
\newcommand{\defref}[1]{Definition~\ref{#1}\xspace}
\newcommand{\ie}{that is\xspace}
\newcommand{\EXP}{\mathbb{E}}
\newcommand{\Bin}{\textsc{Bin}}
\newcommand{\Hyp}{\textsc{Hyp}}
\newcommand{\ea}{\ensuremath{\textrm{E1}}}
\newcommand{\eb}{\ensuremath{\textrm{E2}}}
\newcommand{\nea}{\ensuremath{\overline{\textrm{E1}}}}
\newcommand{\neb}{\ensuremath{\overline{\textrm{E2}}}}
\newcommand{\eps}{\varepsilon}
\newcommand{\euler}{\mathrm{e}}
\renewcommand{\phi}{\varphi}
\title{Expanders Are Universal for the Class of All Spanning Trees}
\author{
  Daniel Johannsen$^{(1),}$\thanks{Daniel Johannsen is supported by a fellowship within the Postdoc-Programme of the German Academic Exchange Service (DAAD).}
  \and
  Michael Krivelevich$^{(1),}$\thanks{Research supported in part by a USA-Israel BSF grant and by a grant from the Israel Science Foundation.}
  \and
  Wojciech Samotij$^{(1),(2),}$\thanks{The research of Wojciech Samotij is supported by the ERC Advanced Grant DMMCA.}
  \and \\
  $^{(1)}$: School of Mathematical Sciences\\
  Tel Aviv University \\
  Tel Aviv, 69978\\
  Israel
  \and \\
  $^{(2)}$: Trinity College\\
  Cambridge CB2 1TQ\\
  United Kingdom
}
\date{July 23, 2011}
\begin{document}

\maketitle

\begin{abstract}
Given a class of graphs $\mathcal{F}$, we say that a graph $G$ is \emph{universal for $\mathcal{F}$}, or \emph{$\mathcal{F}$-universal}, if every $H \in \mathcal{F}$ is contained in $G$ as a subgraph. The construction of sparse universal graphs for various families $\mathcal{F}$ has received a considerable amount of attention. One is particularly interested in tight $\mathcal{F}$-universal graphs, i.\,e., graphs whose number of vertices is equal to the largest number of vertices in a graph from $\mathcal{F}$. Arguably, the most studied case is that when $\mathcal{F}$ is some class of trees.

Given integers $n$ and $\Delta$, we denote by $\mathcal{T}(n,\Delta)$ the class of all $n$-vertex trees with maximum degree at most $\Delta$. In this work, we show that every $n$-vertex graph satisfying certain natural expansion properties is~$\mathcal{T}(n,\Delta)$-universal or, in other words, contains every spanning tree of maximum degree at most $\Delta$. Our methods also apply to the case when~$\Delta$ is some function of $n$. The result has a few very interesting implications. Most importantly, since random graphs are known to be good expanders, we obtain that the random graph $G(n,p)$ is asymptotically almost surely (a.a.s.) universal for the class of all bounded degree spanning (i.\,e., $n$-vertex) trees provided that $p\ge c n^{-1/3}\log^2n$ where~$c>0$ is a constant. Moreover, a corresponding result holds for the random regular graph of degree $p n$. In fact, we show that if $\Delta$ satisfies $\log n \leq \Delta \leq n^{1/3}$, then the random graph $G(n,p)$ with $p \ge c\Delta n^{-1/3}\log n$ and the random $r$-regular $n$-vertex graph with $r\ge  c\Delta n^{2/3}\log n$ are a.a.s.~$\mathcal{T}(n,\Delta)$-universal. Another interesting consequence is the existence of locally sparse $n$-vertex $\mathcal{T}(n,\Delta)$-universal graphs. For~$\Delta\in O(1)$, we show that one can (randomly) construct $n$-vertex $\mathcal{T}(n,\Delta)$-universal graphs with clique number at most five. This complements the construction of Bhatt, Chung, Leighton, and Rosenberg (1989), whose $\mathcal{T}(n,\Delta)$-universal graphs with merely $O(n)$ edges contain large cliques of size~$\Omega(\Delta)$. Finally, we show robustness of random graphs with respect to being universal for $\mathcal{T}(n,\Delta)$ in the context of the Maker-Breaker tree-universality game.
\end{abstract}

\newpage
\section{Introduction}
\label{sec:introduction}
A graph~$G$ is \emph{universal} for a class of graphs~$\mathcal{F}$ (equivalently, we say that $G$ is \emph{$\mathcal{F}$-universal}) if a copy of every member of~$\mathcal{F}$ is contained in~$G$. Since $\mathcal{F}$-universality implies that the maximum degree of~$G$ is at least as large as the maximum degrees of all graphs in~$\mathcal{F}$, it is natural to consider only classes with bounded maximum degree. There exists a rich literature on explicit and randomized constructions of universal graphs~\cite{AlAs02,ChGrPi78,BaChErGrSp82,BhChLeRo89,ChGr78,ChGr79,ChGr83,FrPi87,Ro81,Ha01,AlCa07,Ca10,CaKo99,AlCaKoRoRu00}. One of the classes for which universality has been studied extensively is the class of bounded degree trees. For a positive integer~$n$ and a positive real number~$\Delta$, let~$\mathcal{T}(n,\Delta)$ be the class of all $n$-vertex trees with maximum degree at most~$\Delta$. Bhatt, Chung, Leighton, and Rosenberg~\cite{BhChLeRo89} gave an explicit construction of very sparse $n$-vertex $\mathcal{T}(n,\Delta)$-universal graphs of maximum degree bounded by a function in~$\Delta$. For~$\Delta\in O(1)$, their universal graphs have only~$O(n)$ edges.

In this work, instead of constructing specific universal graphs, we are rather interested in determining for which edge densities almost all $n$-vertex graphs become $\mathcal{T}(n,\Delta)$-universal. In particular, we want to know for which edge probabilities~$p$, the binomial random graph~$G(n,p)$ asymptotically almost surely (a.a.s.) becomes universal for the class of all bounded degree spanning trees. Moreover, we want to identify particular pseudo-random properties that guarantee this universality and that are a.a.s.\ satisfied by~$G(n,p)$.

Since every connected component of a graph contains a spanning tree, we know that, for large values of~$c$, the random graph $G(n,c/n)$ a.a.s.\ contains a copy of some tree that covers a significant proportion of the vertices of~$G$. We may now ask whether this is true for every \emph{specific} tree with size linear in~$n$. For paths, \ie, trees with maximum degree two, Ajtai, Koml{\'o}s and Szemer{\'e}di~\cite{AjKoSz81} showed that if~$c>1$ the random graph $G(n,c/n)$ indeed contains a.a.s.\ a path of length linear in~$n$. On the other hand, for fixed~$c$, the random graph $G(n,c/n)$ a.a.s.\ has maximum degree $(1+o(1))\log n/\log\log n$ and therefore we cannot expect to embed trees of larger maximum degree. Thus, a more reasonable question is to ask whether for every bounded degree tree~$T$ with size linear in~$n$, the random graph $G(n,c/n)$ a.a.s.\ contains a copy of~$T$. 

This question was first addressed by Fernandez de la Vega~\cite{Fe88}, who showed that, for fixed~$\Delta\ge 2$ and~$7/8\le\alpha<1$, there exists a constant $c=c(\Delta,\alpha)$ with~$\Delta-1<c\le 8(\Delta-1)$ such that, for every specific tree~$T\in\mathcal{T}((1-\alpha)n,\Delta)$, the random graph~$G(n,c/n)$ a.a.s.\ contains a copy of~$T$. Alon, Krivelevich, and Sudakov~\cite{AlKrSu07} showed for all~$\eps\in(0,1)$ the existence of a constant $c=c(\Delta,\eps)$ such that $G(n,c/n)$ a.a.s.\ contains a copy of all trees in~$\mathcal{T}((1-\eps)n,\Delta)$, thus extending the result Fernandez de la Vega to \emph{almost spanning trees}, \ie, to arbitrary small values of~$\eps$. A better bound for $c(\Delta,\eps)$ and a resilience version of this result were obtained by Balogh, Csaba, Pei, and Samotij~\cite{BaCsPeSa10} and by Balogh, Csaba, and Samotij~\cite{BaCsSa11}, respectively.

Besides being valid for small values of~$\eps$, the results of Alon et~al.\ as well as of Balogh et~al.\ exhibit a substantial difference to that of Fernandez de la Vega. Instead of~(i) showing that a \emph{fixed} tree in~$\mathcal{T}((1-\eps)n,\Delta)$ is a.a.s.\ contained in $G(n,p)$, they show instead that~(ii) $G(n,p)$ is a.a.s.\ \emph{universal} for the whole class~$\mathcal{T}((1-\eps)n,\Delta)$, \ie, contains a copy of every tree in~$\mathcal{T}((1-\eps)n,\Delta)$ simultaneously.  Note that,  for~$\Delta\ge 3$, the size of~$\mathcal{T}((1-\eps)n,\Delta)$ is exponential in~$n$ and therefore the union-bound is not sufficient to derive~(ii) from~(i).

In order to show that $G(n,c/n)$ is a.a.s.\ universal for $\mathcal{T}((1-\varepsilon)n,\Delta)$, both Alon et al.~\cite{AlKrSu07} and Balogh, Csaba, Pei, and Samotij~\cite{BaCsPeSa10} showed that $G(n,c/n)$ exhibits certain pseudo-random properties that imply large expansion of small sets of vertices (after one deletes few vertices with very small degrees). This allows to apply the classical tree-embedding result of Friedman and Pippenger~\cite{FrPi87} (as was done in~\cite{AlKrSu07}) or its somewhat stronger version due to Haxell~\cite{Ha01} (as was done in~\cite{BaCsPeSa10}) to embed every bounded degree tree that covers all but an $\varepsilon$-fraction of the vertices of $G(n,c/n)$. Recently, Sudakov and Vondr{\'a}k~\cite{SuVo10} gave a randomized algorithm to efficiently embed bounded degree almost spanning trees in graphs with certain expansion properties. We discuss the result of Haxell in \secref{sec:expander}.

For~$p n>\log n$, the situation changes drastically. In this regime, the random graph $G(n,p)$ is connected and we may ask for the existence of \emph{spanning} trees. The very specific case of embedding a Hamilton path was resolved by Koml{\'o}s and Szemer{\'e}di~\cite{KoSz83} and, independently, Bollob{\'a}s~\cite{Bo84}, who proved that if $p n \geq\log n + \log\log n + \omega(1)$ (where $\omega(1)$ is any function which tends to infinity as $n\to\infty$), then $G(n,p)$ a.a.s.\ contains a Hamilton cycle. Frieze and Krivelevich~\cite{FrKr02} and Krivelevich and Sudakov~\cite{KrSu02} investigated pseudo-random conditions expressed in terms of the spectral gap of the host graph which guarantee the existence of Hamilton paths.  Hefetz, Krivelevich, and Szab{\'o}~\cite{HeKrSz09} showed \emph{Hamilton-connectedness} (\ie, the existence of a Hamilton path between any two vertices) of graphs with expansion properties similar to those we introduce in the next section. We discuss their result in \secref{sec:expander}.

Addressing the question of embedding a \emph{fixed} tree~$T\in\mathcal{T}(n,\Delta)$, Krivelevich~\cite{Kr10} showed that if $np\ge\frac{40}{\eps}\Delta\log n+n^{\eps}$ for some~$\eps>0$, then the random graph~$G(n,p)$ a.a.s.\ contains a copy of~$T$. Moreover, it is shown in~\cite{Kr10} that this bound on $p$ is asymptotically tight in the order of magnitude if $n^\eps \leq \Delta\leq n/\log n$. Extending and improving a result of Alon, Krivelevich, and Sudakov~\cite{AlKrSu07}, Hefetz, Krivelevich, and Szab{\'o}~\cite{HeKrSz11} showed that if, in addition,~$T$ has a linear number of leaves or contains a bare path (\ie, a path in which all vertices have degree two in~$T$) of length linear in~$n$, then~$G(n,p)$ a.a.s.\ contains a copy of~$T$ already for $p n=(1+o(1))\log n$.

To the best of our knowledge, until now there exist no results directly addressing the question of $\mathcal{T}(n,\Delta)$-universality of~$G(n,p)$ for~$p\in o(1)$. For~$\mathcal{G}(n,\Delta)$, the class of all graphs on~$n$ vertices with maximum degree at most~$\Delta$, Dellamonica, Kohayakawa, R\"{o}dl, and Ruci{\'{n}}ski~\cite{DeKoRoRu08} showed that there exists a constant~$c:=c(\Delta)$ such that, for $p n\ge c n^{1-1/(2\Delta)}\log^{1/\Delta}n$, the random graph $G(n,p)$ is a.a.s.\ $\mathcal{G}(n,\Delta)$-universal, improving an earlier result in~\cite{AlCaKoRoRu00}. As a special case, this result also applies to the subclass $\mathcal{T}(n,\Delta)$ and implies that the random graph $G(n,p)$ is a.a.s.\ $\mathcal{T}(n,\Delta)$-universal for such values of~$p$. Recently, Dellamonica et~al.~\cite{DeKoRoRu11} improved their result to $p n\ge c n^{1-1/\Delta}\log n$. However, in all these bounds~$p n=n^{1-o(1)}$ for~$\Delta\to\infty$.

\subsection{Outline}
This paper is organized as follows: In the next section, we present all results of this work and put them into context with existing research. The subsequent sections are devoted to the proofs of these results. We give respective references to these proofs whenever we state a result in the next section.

\subsection{Notation}
Let $\mathbb{N}$ and~$\mathbb{R}^{+}$ be the sets of positive integers and positive real numbers, respectively. For two functions $f,g\colon\mathbb{N}\to\mathbb{R}^+$, we write $f\in o(g)$ or, equivalently, $f\ll g$, to denote the fact that $\lim_{n\to\infty}\frac{f(n)}{g(n)}=0$ and $f\in O(g)$ or, equivalently, $g\in\Omega(f)$, to denote the fact that there exists an absolute constant~$c\in\mathbb{R}^+$ such that $f(n)\le c g(n)$ for all~$n\in\mathbb{N}$.

Given a graph~$G$, we denote its vertex set by~$V(G)$ and its edge set by~$E(G)$. All graphs considered in this work are finite (where typically~$|V(G)|$ will be denoted by~$n$), simple, and undirected. For~$X\subseteq V(G)$, let~$G[X]$ be the subgraph of~$G$ induced by~$X$ and let $N_G(X)$ be the \emph{external} neighborhood of~$X$, \ie, $N_G(X)=|\{y\in V(G)\setminus X\mid \exists\,x\in X \colon \{x,y\}\in E(G)\}|$. For~$v\in V(G)$, let $N_G(v):=N_G(\{v\})$ and let $\deg_G(v):=|N_G(v)|$. Given two sets~$X,Y\subseteq V(G)$, $e_G(X,Y)$ is the number of ordered pairs $(x,y)$ with~$x\in X$ and~$y\in Y$ such that $\{x,y\}\in E(G)$. Note that if~$X$ and~$Y$ intersect, then all edges in the intersection are counted twice.

Finally, for two graphs~$H$ and~$G$, an \emph{embedding}~$\phi$ of~$H$ in~$G$ is an injective graph homomorphism, \ie, an injective map~$\phi\colon V(H)\to V(G)$ such that~$\{v,w\}\in E(H)$ implies~$\{\phi(v),\phi(w)\}\in E(G)$. We say~$\phi$ embeds~$H$ \emph{onto}~$G$ if~$\phi$ is bijective and we say~$G$ \emph{contains a copy of}~$H$ if there exists an embedding of~$H$ in~$G$.

\section{Our Results}
\label{sec:results}

\subsection{Tree-Universality of \texorpdfstring{$(n,d)$}{(n,d)}-Expanders}
The main contribution of this work is establishing tree-universality for the members of a certain class of graphs with good expansion properties, which we term \emph{$(n,d)$-expanders}. For all~$n\in\mathbb{N}$ and all~$d\in\mathbb{R}^+$, let
\[
m(n,d):=\Big\lceil\frac{n}{2d}\Big\rceil.
\]
The following notion is an adaptation of the expansion properties investigated in~\cite{BaCsPeSa10} and~\cite{HeKrSz09}.
\begin{definition}[$(n,d)$-expander]
\label{def:expander}
Let~$n\in\mathbb{N}$, let~$d\in\mathbb{R}^+$, and let~$m:=m(n,d)$. A graph $G$ is an $(n,d)$\emph{-expander} if~$|V(G)|=n$ and~$G$ satisfies the following two conditions:
\begin{itemize}
\item[(\ea):] $|N_G(X)|\ge d|X|$ for all $X\subseteq V(G)$ with $1\le |X|< m$.
\item[(\eb):] $e_G(X,Y)>0$ for all disjoint $X,Y\subseteq V(G)$ with $|X|=|Y|=m$.
\end{itemize}
\end{definition}
A simple calculation (\lemref{lem:monotone} in \secref{sec:expander}) shows that these properties are monotone, \ie, if $3\le d_0\le d\le n/6$, then every $(n,d)$-expander is also an $(n,d_0)$-expander.

The main result of this work is the following theorem which states that $(n,d)$-expanders are tree-universal.
\begin{theorem}[Tree-Universality]
\label{thm:universality}
There exists an absolute constant~$c\in\mathbb{R}^+$ such that the following statement holds. Let~$n\in\mathbb{N}$ and~$\Delta\in\mathbb{R}^+$ satisfy $\log n\le\Delta\le c n^{1/3}$. Then every $(n,7\Delta n^{2/3})$-expander is universal for $\mathcal{T}(n,\Delta)$.
\end{theorem}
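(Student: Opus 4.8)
### Proof Proposal

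The plan is to embed an arbitrary tree $T\in\mathcal{T}(n,\Delta)$ into an $(n,d)$-expander $G$ with $d=7\Delta n^{2/3}$ by a two-phase strategy, combining a greedy/structural embedding of the ``bulk'' of $T$ with an absorbing or matching argument for the final vertices, where the expansion properties \ea{} and \eb{} play complementary roles. First I would recall the Friedman--Pippenger / Haxell-type embedding engine discussed in \secref{sec:expander}: if a graph has the property that every small set $X$ expands, i.e.\ $|N_G(X)|\ge (\Delta+1)|X|$ for all $X$ with $|X|\le 2k$ (or a similar bound), then one can embed \emph{any} forest on $k$ vertices with maximum degree $\Delta$, and moreover the embedding can be built one vertex at a time and extended online. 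Since \ea{} gives $|N_G(X)|\ge d|X|=7\Delta n^{2/3}|X|\ge(\Delta+1)|X|$ for all $X$ with $|X|<m=\lceil n/(2d)\rceil$, the host graph $G$ has expansion far exceeding what Haxell's theorem needs, but only for sets of size up to roughly $n/(2d)=\Theta(n^{1/3}/\Delta)$. Thus the embedding engine by itself lets us embed any subtree of $T$ on up to $\Theta(n^{1/3}/\Delta)$ vertices, which is only a tiny fraction of $n$; the real work is to bootstrap this local expansion into an embedding of \emph{all} of $T$.

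The key structural step is to decompose $T$ appropriately. I would cut $T$ into $O(d)=O(\Delta n^{2/3})$ small subtrees, each of size at most $m=\Theta(n^{1/3}/\Delta)$, arranged so that contracting each piece yields an auxiliary tree $T'$ of size $O(d)$ that records how the pieces attach to one another. One then embeds $T$ piece by piece, processing the pieces in an order consistent with $T'$ (say a BFS/DFS order of $T'$), so that when a new piece $P$ is to be embedded its unique ``connector'' vertex (the vertex of $P$ adjacent in $T$ to an already-embedded piece) must land in the neighborhood of a prescribed, already-used host vertex. Using property \ea{} and Haxell's theorem applied to the graph $G$ minus the already-used vertices, each individual piece of size $\le m$ can be embedded with room to spare — the subtle point is that the set of used vertices may be large (close to $n$ near the end), so I must verify that expansion of small sets survives deletion of the used set; this is where one needs the expansion factor $d$ to be genuinely large (of order $\Delta n^{2/3}$, not merely $\Delta$), so that even after throwing away up to $n - O(n^{1/3}/\Delta)$ vertices the remaining graph still expands small sets by a factor $\ge \Delta+1$. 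A clean way to organize this is a potential/volume argument: reserve a ``buffer'' of unused vertices and show that every small set still has $\ge (\Delta+1)|X|$ neighbors \emph{inside the buffer}, which follows from \ea{} because $d|X|$ exceeds the number of non-buffer vertices times... — more precisely, one shows $|N_G(X)\setminus U|\ge d|X|-|U|\ge(\Delta+1)|X|$ whenever $|U|\le n - n^{1/3}$ and $|X|$ is below the relevant threshold.

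The role of property \eb{} is to finish the embedding once only $2m$ or fewer vertices of $T$ remain unembedded, which is exactly the regime the Haxell engine cannot handle alone (its expansion hypothesis fails for sets that are a constant fraction of what's left). Here I would pair up the last pieces of $T$ with the last unused vertices of $G$ via \eb{}: at each step we have a ``front'' — the set of host vertices that still need a child attached — of size at most $m$, and a set of unused host vertices of size at least $m$, and \eb{} guarantees an edge between them, which lets us extend the embedding by at least one vertex. Iterating, we clear the remaining $O(m)$ vertices of $T$. The main obstacle, and the part requiring the most care, is the bookkeeping that keeps \emph{both} invariants alive simultaneously throughout the process: we must choose the decomposition of $T$ and the order of embedding so that at every stage either (a) the unembedded part still has a subtree of size $\le m$ hanging off the embedded part in a way that Haxell can absorb, with enough unused vertices remaining to keep small-set expansion above $\Delta+1$, or (b) we have entered the endgame where the front has size $\le m$ and \eb{} applies. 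Balancing the two thresholds — the $m<n/(2d)$ cutoff in \ea{} and the ``$|X|=|Y|=m$'' requirement in \eb{} — is precisely why $m$ is defined as $\lceil n/(2d)\rceil$, and checking that the arithmetic closes (number of pieces times max piece size $\le n$, buffer never exhausted too early, front never exceeds $m$) is the technical heart of the argument. The degree condition $\log n\le\Delta\le cn^{1/3}$ enters to ensure $m\ge 2$ (so there is room to maneuver) and that $d=7\Delta n^{2/3}\le n/6$, so that the monotonicity lemma applies and $G$ is in particular a $(n,3)$-expander, hence connected with good basic expansion.
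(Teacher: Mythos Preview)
Your proposal has a genuine gap at its core, and the overall strategy is not the one the paper takes.

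First, you underestimate Haxell's embedding engine. In an $(n,d)$-expander with $d\ge 2\Delta$, Haxell's theorem (in the paper, \corref{cor:almostspanning}) already embeds every tree on $n-4\Delta m$ vertices, not merely trees of size~$\Theta(m)$. The expansion hypothesis is needed only for sets of size up to $2m$; for sets with $m<|X|\le 2m$, property~\eb{} gives $|N_G(X)|\ge n-|X|-m$, and it is this large additive term that supplies the $k$ in the conclusion of \thmref{thm:almostspanning}. So the bulk of $T$---all but $4\Delta m=O(n^{1/3})$ vertices---goes in by a single application of Haxell; no piecewise decomposition is needed, and your $O(d)$-piece scheme is working against a non-problem.

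Second, your piecewise scheme breaks quantitatively. You write $|N_G(X)\setminus U|\ge d|X|-|U|\ge(\Delta+1)|X|$ for $|U|\le n-n^{1/3}$, but with $|X|=1$ this reads $d-|U|\ge\Delta+1$, i.e.\ $|U|\le 7\Delta n^{2/3}-\Delta-1$. For $\Delta=\log n$ this caps $|U|$ at about $7n^{2/3}\log n$, far below $n-n^{1/3}$. Once $|U|>d$, a newly attached vertex may have all its $G$-neighbours already used, and the greedy/Haxell step stalls long before the tree is nearly spanning.

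Third, your endgame via \eb{} does not respect the structure of $T$. Condition~\eb{} gives an edge between \emph{some} front vertex and \emph{some} unused vertex, but $T$ dictates exactly which front vertex still owes how many children; an arbitrary such edge is useless unless it originates at a front vertex with an outstanding child. You cannot complete a prescribed spanning tree one vertex at a time from~\eb{} alone.

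The paper proceeds differently. After observing that Haxell handles all but $O(\Delta m)$ vertices, it invokes the structural dichotomy of \lemref{lem:pathorleaves}: every tree in $\mathcal{T}(n,\Delta)$ either contains a bare path on $\Omega(\Delta m)$ vertices or has $\Omega(\Delta m^2)$ leaves (and this is applied once more to $T$ minus its leaves). In the bare-path case, one uses \lemref{lem:partition} to split $V(G)$, embeds $T$ minus the path by Haxell, and fills in the path via Hamilton connectivity of expanders (\corref{cor:hamilton}). In the many-leaves case, one withholds the leaves (and possibly the second-level leaves), embeds the rest by Haxell and/or Hamilton connectivity, and then attaches the leaves by a Hall-type star matching (\lemref{lem:starmatching}), with \lemref{lem:exceptionalset} bounding the few host vertices poorly connected to the parents' images. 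The bare-path/leaves dichotomy, Hamilton connectivity, and the star-matching finish are the ingredients your proposal is missing.
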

Note that the class $\mathcal{T}(n,\log n)$ also includes all $n$-vertex trees of maximum degree~$\Delta$ smaller than $\log n$ (for example, for~$\Delta\in O(1)$ and~$n$ large). Thus, \thmref{thm:universality} also applies to the situation where~$\Delta<\log n$ by setting~$\Delta$ to~$\log n$. The proof of \thmref{thm:universality} is given in \secref{sec:universality}.

There are many known constructions of expanders (see, e.g.~\cite{KrSu06}). Thus, by verifying the conditions in \defref{def:expander}, one obtains explicit constructions of relatively sparse universal graphs for~$\mathcal{T}(n,\Delta)$.

\subsection{Random Graphs}
Random graphs are well-known to typically exhibit strong expansion properties. For example, the random graph~$G(n,p)$ with $p n\ge 7d\log n$ is a.a.s.\ an $(n,d)$-expander (\lemref{lem:gnp} in \secref{sec:random}) and thus, as a direct consequence of \thmref{thm:universality}, tree-universal.
\begin{theorem}
\label{thm:gnp}
There exists an absolute constant~$c\in\mathbb{R}^+$ such that the following statement holds. Let~$\Delta\colon\mathbb{N}\to\mathbb{R}^+$ satisfy $\Delta\ge\log n$. Then the random graph $G(n,p)$ is a.a.s.\ universal for $\mathcal{T}(n,\Delta)$, provided that $p n\ge c\Delta n^{2/3}\log n$.
\end{theorem}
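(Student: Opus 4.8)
The plan is to derive \thmref{thm:gnp} directly from \thmref{thm:universality}, using the fact that sufficiently dense random graphs are a.a.s.\ good expanders. Write $c_1$ for the absolute constant supplied by \thmref{thm:universality}. First I would observe that the density hypothesis automatically places $\Delta$ inside the admissible window of \thmref{thm:universality}: since $p\le 1$, the assumption $pn\ge c\Delta n^{2/3}\log n$ forces $\Delta\le n^{1/3}/(c\log n)$, so together with $\Delta\ge\log n$ we get $\log n\le\Delta\le c_1 n^{1/3}$ for all sufficiently large $n$ (the finitely many remaining $n$ are irrelevant to an a.a.s.\ statement). In view of \thmref{thm:universality}, it then suffices to show that, for the particular value $d:=7\Delta n^{2/3}$, the random graph $G(n,p)$ is a.a.s.\ an $(n,d)$-expander.

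To this end I would fix the constant in the statement to be, say, $c:=49$. Then $pn\ge 49\Delta n^{2/3}\log n = 7d\log n$, so \lemref{lem:gnp} gives precisely that $G(n,p)$ is a.a.s.\ an $(n,d)$-expander, \ie, a.a.s.\ an $(n,7\Delta n^{2/3})$-expander (and $3\le d\le n/6$ throughout this regime, so the $(n,d)$-expander notion behaves as expected, cf.\ \lemref{lem:monotone}). Feeding this back into \thmref{thm:universality} yields that $G(n,p)$ is a.a.s.\ universal for $\mathcal{T}(n,\Delta)$, which is the assertion. At this level of the argument there is essentially no obstacle: the entire substance has been offloaded onto \thmref{thm:universality} and \lemref{lem:gnp}.

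For completeness I would also recall the proof of \lemref{lem:gnp}, a routine first-moment computation. To establish \ea, one fixes $1\le k<m(n,d)$ and a $k$-element set $X$; then $|N_G(X)|$ stochastically dominates a $\Bin(n-k,\,1-(1-p)^{k})$ random variable, whose mean is of order $\min\{knp,\,n\}$ and, using $np\ge 7d\log n$ together with $k<m(n,d)$, exceeds $dk$ by a wide margin, so a Chernoff bound makes $\Pr[\,|N_G(X)|<dk\,]$ super-exponentially small in $k\log n$, comfortably beating the $\binom{n}{k}\le(\euler n/k)^{k}$ choices of $X$ (and the subsequent sum over $k$). To establish \eb, one fixes disjoint sets $X,Y$ of size $m=m(n,d)$ and uses $\Pr[e_G(X,Y)=0]=(1-p)^{m^{2}}\le\euler^{-m^{2}p}$; here $m^{2}p$ has order $(n/d)\log n$, which outweighs the entropy $2m\log(\euler n/m)=\Theta((n/d)\log n)$ of the family of bad pairs, since the per-pair exponent reduces to the comparison $mp\gtrsim\tfrac{7}{2}\log n$ against $2\log(\euler n/m)\lesssim\tfrac{4}{3}\log n+O(\log\Delta)$, and $\log\Delta\le\tfrac13\log n$ because $\Delta\le n^{1/3}$. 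The one step calling for genuine care is exactly this matching of the per-configuration exponential failure rate against the logarithm of the number of bad configurations — both for \ea and for \eb; the threshold $pn\ge 7d\log n$ in \lemref{lem:gnp} is calibrated so that this match succeeds, and everything else is bookkeeping.
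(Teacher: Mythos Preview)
Your proposal is correct and matches the paper's approach exactly: the paper also derives \thmref{thm:gnp} as an immediate consequence of \thmref{thm:universality} together with \lemref{lem:gnp}, the latter in turn proved via the general framework of \lemref{lem:random} using the bound $\Pr[e_G(X,Y)=0]=(1-p)^{|X||Y|}\le\euler^{-p|X||Y|}$. Your added sketch of \lemref{lem:gnp} is a valid alternative to the paper's route through \lemref{lem:random}, and your observation that $p\le 1$ forces $\Delta\le c_1 n^{1/3}$ makes explicit a point the paper leaves implicit.
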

\thmref{thm:gnp} implies that $G(n,c n^{-1/3}\log^2 n)$ is a.a.s.\ universal for $\mathcal{T}(n,\log n)$ and thus also for $\mathcal{T}(n,O(1))$ if~$c$ is a large enough constant.

Likewise, the random $r$-regular $n$-vertex graph with $\max\{7d\log n,\sqrt{n}\log n\}\le r\ll n$  (where $rn$ is even) is a.a.s.\ an $(n,d)$-expander (\lemref{lem:regular} in \secref{sec:random}) and thus, as a direct consequence of \thmref{thm:universality}, tree-universal.
\begin{theorem}
\label{thm:regular}
There exists an absolute constant~$c\in\mathbb{R}^+$ such that the following statement holds. Let~$\Delta\colon\mathbb{N}\to\mathbb{R}^+$ satisfy $\Delta\ge\log n$. Then the $r$-regular random graph on $n$~vertices is a.a.s.\ universal for $\mathcal{T}(n,\Delta)$, provided that $c\Delta n^{2/3}\log n\le r\ll n$, and $r n$ is even.
\end{theorem}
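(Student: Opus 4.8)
The plan is to obtain \thmref{thm:regular} by combining \thmref{thm:universality} with the expansion property of random regular graphs recorded in \lemref{lem:regular}, in exact parallel with the way \thmref{thm:gnp} is deduced from \lemref{lem:gnp}. Set $d:=7\Delta n^{2/3}$; then an $(n,d)$-expander is precisely a graph to which \thmref{thm:universality} applies.

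First I would verify, for all sufficiently large $n$, that the hypotheses of \lemref{lem:regular} are met with this $d$. We need $7d\log n\le r$ and $\sqrt n\log n\le r$, the remaining requirement $r\ll n$ being part of the hypothesis of \thmref{thm:regular}. Since $7d\log n=49\Delta n^{2/3}\log n$, the first inequality holds provided the absolute constant $c$ in the statement is chosen with $c\ge 49$. For the second, $\Delta\ge\log n\ge 1$ gives $r\ge c\Delta n^{2/3}\log n\ge c\,n^{2/3}\log n\ge\sqrt n\log n$ for $n$ large. Thus \lemref{lem:regular} yields that the random $r$-regular $n$-vertex graph $G$ is a.a.s.\ an $(n,7\Delta n^{2/3})$-expander.

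Next I would check that $\Delta$ lies in the range $\log n\le\Delta\le c_0 n^{1/3}$ demanded by \thmref{thm:universality}, where $c_0$ denotes the absolute constant supplied by that theorem. The lower bound is assumed; for the upper bound, the hypotheses $r\ge c\Delta n^{2/3}\log n$ and $r\ll n$ together force $\Delta n^{2/3}\log n=o(n)$, i.e.\ $\Delta=o\!\big(n^{1/3}/\log n\big)$, so in particular $\Delta\le c_0 n^{1/3}$ once $n$ is large enough. Applying \thmref{thm:universality} to $G$ then shows that $G$ is universal for $\mathcal{T}(n,\Delta)$, and since $G$ has this property a.a.s., the proof is complete. (Should one wish to keep $d$ flexible, the monotonicity statement \lemref{lem:monotone} permits choosing any $d\ge 7\Delta n^{2/3}$ instead, but this is not needed.)

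There is essentially no genuine obstacle here: all the real work is already packaged in \lemref{lem:regular} and \thmref{thm:universality}. The only care required is bookkeeping --- pinning down the absolute constant $c$ (e.g.\ $c=49$) so that the lower bound on $r$ forced by \lemref{lem:regular} is satisfied, noticing that the conditions $r\ge c\Delta n^{2/3}\log n$ and $r\ll n$ automatically keep $\Delta$ below the threshold $c_0 n^{1/3}$ needed by \thmref{thm:universality}, and recalling that the a.a.s.\ conclusion permits us to disregard the finitely many small $n$ for which the above inequalities might fail.
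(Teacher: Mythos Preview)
Your proposal is correct and follows exactly the route the paper takes: the paper's entire argument is the single sentence ``Together with \thmref{thm:universality}, \lemref{lem:regular} implies \thmref{thm:regular},'' and you have simply filled in the bookkeeping that makes this implication precise. One small remark: the side condition in \lemref{lem:regular} is stated there as $d\ge\sqrt n\log n$ rather than $r\ge\sqrt n\log n$, but with $d=7\Delta n^{2/3}$ and $\Delta\ge\log n$ this is immediate, so nothing changes.
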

 Note that the restriction~$r\ll n$ in \thmref{thm:regular} is likely to be an artifact resulting from the use of a switching argument in the proof of \lemref{lem:regular} and we believe the result should extend to linear values of $r=r(n)$. Similarly as before, \thmref{thm:regular} implies that the random $r$-regular graph on~$n$ vertices with $r\ge c n^{2/3}\log^2 $ (where $rn$ is even) is a.a.s.\ universal for $\mathcal{T}(n,O(1))$ if~$c$ is a large enough constant.

\subsection{Locally Sparse Expanders}
Bhatt, Chung, Leighton, and Rosenberg~\cite{BhChLeRo89} gave an explicit construction of $\mathcal{T}(n,\Delta)$-universal graphs on $n$ vertices whose maximum degree is bounded by a function of~$\Delta$. Thus, for constant~$\Delta$, the number of edges in this graph is in~$O(n)$. In comparison, the random graph we consider in \thmref{thm:gnp} a.a.s.\ has~$\Theta(n^{5/3}\log^2 n)$ edges. However, the graph constructed in~\cite{BhChLeRo89} is \emph{locally dense}, \ie, it contains a large number of cliques of size~$\Omega(\Delta)$ (cf. Lemma~8 in~\cite{BhChLeRo89}). In this section, we show how to construct \emph{locally sparse} graphs that are universal for all bounded degree trees.

Consider the random graph~$G$ drawn according to~$G(n,p)$. The expected number of cliques of size~$k$ in~$G$ is~$\binom{n}{k}p^{\binom{k}{2}}$. Therefore, by Markov's inequality, if~$n^kp^{\frac{k(k-1)}{2}}\ll 1$, then~$G$ a.a.s.\ does not contain any clique of size~$k$. Consequently, for $p(n)= n^{-1/3}\log^2 n$, the random graph~$G(n,p)$ is a.a.s.\ both $K_8$-free and~$\mathcal{T}(n,\log n)$-universal. Thus, for sufficiently large~$n$, a $\mathcal{T}(n,\log n)$-universal graph with clique number at most seven exists. We can strengthen this observation by showing that, for an appropriate choice of chosen~$d$, $r$, and~$p$, the random graph~$G(n,p)$ is still a.a.s. an $(n,d)$-expander even if we make it~$K_r$-free by deleting a carefully chosen set of edges (\lemref{lem:sparse} in \secref{sec:sparse}). Together with \thmref{thm:universality}, this implies the existence of locally sparse tree-universal graphs.
\begin{theorem}
\label{thm:sparse}
There exists an absolute constant~$c\in\mathbb{R}^+$ such that the following statement holds. Let~$n\in\mathbb{N}$ and let~$r\in\mathbb{N}$ with~$r\ge 5$. Then there exists a graph with clique number at most~$r$ that is universal for $\mathcal{T}(n,c n^{1/3-2/(r+2)}/\log n)$.
\end{theorem}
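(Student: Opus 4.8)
The plan is to read \thmref{thm:sparse} off from \thmref{thm:universality} together with the sparsification lemma \lemref{lem:sparse}; the real content sits in the latter. Given $r\ge 5$, I would put
\[
\Delta:=\frac{c\,n^{1/3-2/(r+2)}}{\log n},\qquad d:=7\Delta\,n^{2/3}=\frac{7c\,n^{1-2/(r+2)}}{\log n},
\]
with $c$ a small absolute constant fixed only at the end. Because $r\ge 5$ forces $1/3-2/(r+2)\ge 1/21>0$, one has $\log n\le\Delta\le c\,n^{1/3}$ for all large $n$, so the hypotheses of \thmref{thm:universality} hold; the finitely many remaining $n$ (a bounded, $r$-dependent range, in which $\Delta$ would drop to $O(\log n)$) I would treat by a direct ad hoc argument, noting for instance that $\mathcal{T}(n,\Delta)=\emptyset$ once $n\ge 3$ and $\Delta<2$. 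For the main regime, \lemref{lem:sparse} (applied with this $d$ and the corresponding edge probability) produces an $n$-vertex graph $G'$ that contains no copy of $K_{r+1}$ and is an $(n,d)$-expander, and \thmref{thm:universality} then says $G'$ is universal for $\mathcal{T}(n,\Delta)=\mathcal{T}\!\big(n,c\,n^{1/3-2/(r+2)}/\log n\big)$, while $K_{r+1}$-freeness gives clique number at most $r$. Shrinking $c$ to absorb the constants used along the way finishes the proof, modulo \lemref{lem:sparse}.

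To prove \lemref{lem:sparse} I would sparsify $G(n,p)$ precisely at the density forced on us. The edge probability is $p=\Theta\!\big(n^{-2/(r+2)}\big)$ --- essentially the smallest for which the estimates behind \lemref{lem:gnp} make $G(n,p)$ a.a.s.\ an $(n,d)$-expander --- and at this density the expected number of copies of $K_{r+1}$ is $\binom{n}{r+1}p^{\binom{r+1}{2}}=\Theta\!\big(n^{2-2/(r+2)}\big)$, the same order as $|E(G)|$, with a leading constant that tends to $0$ with $p$ (hence with $c$). So for $c$ small the copies of $K_{r+1}$ are only a tiny fraction of the edges, and I would delete a single edge from each of them. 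Property $(\eb)$ survives because a.a.s., simultaneously for all disjoint $m$-sets $X,Y$, there are $\Omega(m^2p)$ edges of $G$ between them, which dominates the number of copies of $K_{r+1}$ straddling the pair. Property $(\ea)$ is the delicate point: a vertex leaves $N_G(X)$ only once \emph{all} of its $G$-edges into $X$ have been deleted, so the number of such vertices is at most the number of copies of $K_{r+1}$ meeting $X$. When $|X|$ is well below $m/\log n$, almost every external neighbour of $X$ is joined to $X$ by a single edge and $G(n,p)$ enjoys a $\Theta(\log n)$ surplus in $(\ea)$, so losing a small constant fraction of $N_G(X)$ is harmless; when $|X|$ is comparable to $m$, I would instead use that $|N_G(X)|\ge(1-o(1))n$ comfortably exceeds $d|X|\le dm\approx n/2$ together with the fact that a neighbour joined to $X$ by many edges is proportionally costly to isolate --- a union bound over all $X$ with $|X|<m$, fed by Chernoff estimates on $|N_G(y)\cap X|$, then shows the deletions cannot break $(\ea)$.

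The hard part will be this verification of $(\ea)$ after the deletion. The density $p=\Theta(n^{-2/(r+2)})$ is pinned from both sides --- it must be this large for the expansion \thmref{thm:universality} consumes, and already there the $K_{r+1}$-copies match the edges in number --- so there is essentially no slack; everything rests on two scarce resources, the tunably small leading constant in the ratio of $K_{r+1}$-copies to edges and the logarithmic surplus of expansion that $G(n,p)$ provides over what is required. Carrying the regime $|X|$ close to $m$ through a union bound over the $\exp\!\big(\Theta(n^{2/(r+2)}\log^2 n)\big)$ candidate sets, with concentration bounds on the degrees $|N_G(y)\cap X|$ sharp enough to beat that count, is where the bulk of the work in \lemref{lem:sparse} will lie.
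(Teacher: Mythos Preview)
Your reduction is exactly the paper's: apply \lemref{lem:sparse} with $k=r+1$ and $\ell=\binom{r+1}{2}$ (so that $(k-2)/(\ell-1)=2/(r+2)$) to obtain a $K_{r+1}$-free $(n,d)$-expander with $d=\Theta(n^{1-2/(r+2)}/\log n)$, and then feed this into \thmref{thm:universality}. Your arithmetic on the exponents and on the constraint $r\ge 5$ is correct.

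Where you diverge is in the proof of \lemref{lem:sparse}. You propose to delete one edge from each copy of $K_{r+1}$; the paper instead fixes a \emph{maximal edge-disjoint} family of $K_{r+1}$'s (more generally, of $(k,\ell)$-dense subgraphs) and deletes \emph{all} edges of every member of that family. Maximality guarantees that no $K_{r+1}$ survives, and edge-disjointness is the real payoff: the probability that $t$ members of the family each have an edge in $X\times Y$ is at most $\binom{|X||Y|}{t}n^{(k-2)t}p^{\ell t}$, because the $t$ copies together force $\ell t$ distinct edges. This yields an exponential tail bound on the number of deleted $X$--$Y$ edges of the same shape as the Chernoff bound on $e_G(X,Y)$, so a single union bound over all relevant pairs $(X,Y)$ handles both $(\ea)$ and $(\eb)$ simultaneously and cleanly.

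Your route can probably be pushed through, but the sketch for $(\ea)$ has a real gap. Your key inequality ``the number of lost neighbours of $X$ is at most the number of $K_{r+1}$'s meeting $X$'' is correct, and in expectation this count is $\approx |X|\cdot(49c)^{\binom{r+1}{2}}n^{r/(r+2)}$, comfortably below the surplus $|N_G(X)|-d|X|$ for small $c$. But you need this simultaneously for all $\binom{n}{|X|}$ choices of $X$, and the Chernoff estimates on $|N_G(y)\cap X|$ you invoke do not control the $K_{r+1}$-count. What you would actually need is concentration of the number of $K_{r+1}$'s through a vertex (or through an edge), strong enough to beat a union bound over $n$ vertices; this is obtainable, say via a moment or Kim--Vu argument, but it is extra work that the paper's edge-disjoint trick avoids entirely.
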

In particular, \thmref{thm:sparse} implies that there exists a $\mathcal{T}(n,c n^{1/21}/\log n)$-universal graph with clique number at most five for all~$n\in\mathbb{N}$ if~$c$ is a small enough constant.

\subsection{Lower Bound Constructions}
In \thmref{thm:universality} we gave an \emph{upper} bound of $7\Delta n^{2/3}\log n$ on the minimum value~$d^*$ such that, for all $d\ge d^*$, every~$(n,d)$-expanders are $\mathcal{T}(n,d)$-universal. We now discuss \emph{lower} bounds on~$d^*$, \ie, constructions of $(n,d)$-expanders with (relatively) large values of~$d$ which are not universal for~$\mathcal{T}(n,\Delta)$. 

For random graphs, Krivelevich~\cite{Kr10} showed that, for every~$\eps>0$, there exists a~$\delta>0$ such that if~$n^{\eps}\le\Delta\le\frac{n}{\log n}$, then there exists a tree~$T\in\mathcal{T}(n,\Delta)$ such that the random graph~$G(n,p)$ with $p n=\delta\Delta\log n$  a.a.s.\ does not contain a copy of~$T$. In contrast, \thmref{thm:gnp} shows that there exist an absolute constant~$c$ such that $p n= c \Delta n^{2/3}\log n$ is sufficient for $G(n,p)$ to become universal for $\mathcal{T}(n,\Delta)$. This huge gap of order~$n^{2/3}$ seems to be  mainly an artifact of the proof of \thmref{thm:universality}.

In contrast to the random graph setting, we can show that, for $\Delta\in n^{\Omega(1)}$, the smallest value of~$d$ necessary for every~$(n,d)$-expander to be $\mathcal{T}(n,\Delta)$-universal grows faster than~$\Omega(\Delta\log n)$. To this end, recall that the radius of a connected graph~$G$ is defined as
\[
r(G):=\min_{u\in V}\max_{v\in V}\mathrm{dist}(u,v)
\]
where the \emph{distance} $\mathrm{dist}(u,v)$ between two vertices~$u$ and~$v$ is the length of a shortest path connecting~$u$ and~$v$ in~$G$. For example, the radius of the star-graph~$K_{1,n}$ is~$1$ and the radius of a path of even length~$2k$ is~$k$. A crucial observation is that we cannot embed a spanning graph onto a host graph with a strictly larger radius: the embedding itself would be a proof that the host graph has small radius, too. We will show that~$\mathcal{T}(n,\Delta)$ contains trees with a relatively small radius, whereas there are $(n,d)$-expanders (with quite large~$d$) with a fairly large radius. 

Now, consider the complete $(\lfloor\Delta\rfloor-1)$-ary tree on $n$ vertices  with~$\Delta\ge 3$, \ie, the rooted $(\lfloor\Delta\rfloor-1)$-ary tree in which every level, except possibly the last, is completely filled. This tree is in~$\mathcal{T}(n,\Delta)$ and its radius is strictly smaller than $1+\log n/\log(\lfloor\Delta\rfloor-1)$. On the other hand, we show below that there exist very strong expanders with radius at least~$1+\log n/\log(\lfloor\Delta\rfloor-1)$.
\begin{definition}[$\mathcal{G}_H$]
Let~$n\in\mathbb{N}$ and let~$H$ be a graph on~$n$ vertices. We define~$\mathcal{G}_H$ to be the class of $2n$-vertex graphs obtained from~$H$ by replacing each vertex~$v\in V(H)$ by two vertices~$u_v$ and~$u'_v$ and each edge~$\{v,w\}\in E(H)$ by either the two edges~$\{u_v,u_w\}$ and~$\{u'_v,u'_w\}$ or the two edges~$\{u_v,u'_w\}$ and~$\{u'_v,u_w\}$.
\end{definition}
For the class~$\mathcal{G}_H$, we will show the following result in \secref{sec:sparse}.
\begin{lemma}
\label{lem:radius}
There exists an absolute constant~$c\in\mathbb{R}^+$ such that the following statement holds. Let~$n,r\in\mathbb{N}$ satisfy~$c n^{1/r}\ge 3\log n$. Then there exists an $n$-vertex graph~$H$ such that all graphs in~$\mathcal{G}_H$ have radius at least~$r+2$ and a graph chosen uniformly at random from~$\mathcal{G}_H$ is a.a.s.\ a $(2n,c n^{1/r}\log^{-1}n)$-expander.
\end{lemma}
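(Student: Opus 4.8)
The plan is to realize the desired expander as a random member of $\mathcal{G}_H$ for a cleverly chosen $H$, so that the radius bound is forced by $H$ alone while the expansion is inherited from $H$ with the help of the random signs. Throughout, write $d:=c\,n^{1/r}\log^{-1}n$; the hypothesis $c\,n^{1/r}\ge 3\log n$ says precisely $d\ge 3$, which is the threshold in the monotonicity lemma. The first and main task is to exhibit an $n$-vertex graph $H$ that is, on the one hand, a sufficiently strong expander — one wants $H$ itself to be (essentially) an $(n,2d)$-expander, so that after the $2$-lift one still has room for an $(2n,d)$-expander — and, on the other hand, has radius at least $r+2$. Note that an $(N,d)$-expander has radius only $O(\log_d N)=O(r)$, so $H$ must be an expander whose radius sits near the top of the permissible range; the natural ``stretched'' candidates (blow-ups of long graphs by expander blocks, powers of cycles, path-of-cliques constructions, etc.)\ have to be handled with care, since a too thin ``waist'' destroys property (\eb) while too small cliques destroy property (\ea). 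I expect this to require a short case analysis in terms of the relative sizes of $r$ and $d$.

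Granting such an $H$, the radius half is immediate and deterministic. For any $G\in\mathcal{G}_H$ the natural projection $\pi\colon G\to H$ (sending $u_v,u'_v\mapsto v$) is a graph homomorphism, so every walk in $G$ projects to a walk in $H$; hence $\mathrm{dist}_G(x,y)\ge\mathrm{dist}_H(\pi(x),\pi(y))$ for all $x,y$, and therefore $r(G)\ge r(H)\ge r+2$. This already proves the first assertion for \emph{every} member of $\mathcal{G}_H$.

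For the second assertion one shows that a uniformly random $G\in\mathcal{G}_H$ a.a.s.\ satisfies (\ea) and (\eb) with parameters $2n$ and $d$; the factor-$2$ slack built into $H$ provides the room. For (\ea), take $X\subseteq V(G)$ with $1\le|X|<m(2n,d)$ and let $X':=\pi(X)\subseteq V(H)$, so $|X|/2\le|X'|\le|X|$. One checks the elementary containment $N_H(X')\subseteq\pi(N_G(X))$, hence $|N_G(X)|\ge|N_H(X')|$; when $X'$ is small enough, property (\ea) of $H$ (with parameter $2d$) gives $|N_H(X')|\ge 2d|X'|\ge d|X|$ and we are done. When $X'$ is larger this is too lossy, and one invokes the signs: a vertex $w\in V(H)$ having many neighbours in $X'$ has \emph{both} of its lifts $u_w,u'_w$ in $N_G(X)$ except with probability exponentially small in the number of such neighbours (the relevant signs being mutually independent), and a Chernoff estimate then forces $|N_G(X)|$ to be essentially $2|N_H(X')|$, which together with the expansion of $H$ suffices. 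A union bound over all $X$ completes (\ea). For (\eb), given disjoint $X,Y\subseteq V(G)$ with $|X|=|Y|=m(2n,d)$, the images $X',Y'$ are large subsets of $V(H)$, so $H$ — being a strong expander — contains at least $3\,m(2n,d)\log(2n)$ edges between them (or between large disjoint subsets of them); each such edge of $H$ lifts to an edge of $G$ joining $X$ and $Y$ with probability at least $\tfrac12$, independently over the edges, so (\eb) fails for a fixed pair with probability at most $2^{-3m(2n,d)\log(2n)}$, and a union bound over all pairs finishes.

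The hard part is squarely the construction of $H$: one must thread the needle between ``no sparse cut of size up to $m(n,d)$'' (which is what (\ea) asks of $H$) and ``radius at least $r+2$'', and it is exactly here that the hypothesis $d\ge 3$ enters. A secondary technical point is the intermediate range of $|X|$ in the proof of (\ea), where neither the deterministic projection bound nor the crude probabilistic bound alone is strong enough, so one interpolates — controlling the contribution to $N_G(X)$ region by region according to how heavily $X$ loads each part of $H$, and applying the probabilistic estimate only once the local ``load'' is at least of order $\log n$.
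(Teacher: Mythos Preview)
Your radius argument, while correct as far as it goes, places the entire burden on constructing an $(n,2d)$-expander $H$ with $r(H)\ge r+2$, and this is where the proposal breaks down. An $(n,2d)$-expander with $2d\approx 2c\,n^{1/r}/\log n$ has radius at most $\log_{2d+1}(n)+O(1)$, which for large $n$ is $(r-1)(1+o(1))+O(1)$; so you are asking for $H$ to sit at the very top of (and possibly above) the permissible range, and your candidate constructions do not achieve this. A path-of-cliques, for instance, has a cut between two consecutive blocks of size equal to the clique size, which forces the cliques to have size $\Omega(n)$ and hence the path to have $O(1)$ blocks --- killing the radius. Blow-ups and cycle powers have the same tension: the ``waist'' that makes the radius large is exactly a sparse cut that violates (\ea) or (\eb). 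You acknowledge this is ``the hard part,'' but the proposal contains no construction, and I do not see how to complete it along these lines.

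The paper sidesteps this completely by a different radius argument: rather than $r(G)\ge r(H)$, it uses $r(G)\ge\mathrm{girth}(H)$. The point is that a path in $G$ from $u_v$ to $u'_v$ projects to a closed walk in $H$ which, because it switches sheets, cannot be reduced to the trivial walk by removing backtracks; hence it contains a cycle and has length at least $\mathrm{girth}(H)$. Thus the eccentricity of every vertex of $G$ is at least $\mathrm{girth}(H)$. Now one only needs an $n$-vertex \emph{high-girth} expander, and these are supplied by \lemref{lem:sparse} with $k=\ell=r+1$ (and by $K_n$ for $r=1$); high girth and strong expansion are perfectly compatible, unlike large radius and strong expansion. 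The expansion of $G$ is then handled uniformly via \lemref{lem:random}: for each relevant pair $X,Y\subseteq V(G)$ one projects to $A,B\subseteq V(H)$, observes that each $H$-edge between $A$ and $B$ lifts to an $X$--$Y$ edge independently with probability $1/2$, so $\Pr[e_G(X,Y)=0]\le 2^{-e_H(A,B)}$, and the quantitative edge bound~\eqref{eq:sparse} from \lemref{lem:sparse} finishes. This avoids your case split on $|X|$ entirely.
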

Combining \lemref{lem:radius} with our discussion of the radius of the complete $(\lfloor\Delta\rfloor-1)$-ary tree, we get the following theorem.
\begin{theorem}
\label{thm:lower}
There exists an absolute constant~$c\in\mathbb{R}^+$ such that the following statement holds. Let~$n,r\in\mathbb{N}$ and let~$\Delta:=n^{1/(r+1)}+2$ satisfy~$\Delta\ge c^{-1}\log n$. Then there exists an $(n,c \Delta^{1+1/r}\log^{-1} n)$-expander which is not $\mathcal{T}(n,\Delta)$-universal.
\end{theorem}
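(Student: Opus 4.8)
The plan is to convert \lemref{lem:radius} into a non-universality statement via the radius obstruction mentioned above: if a tree~$T$ has $n$ vertices and $r(T)<r(G)$, then $G$ cannot contain~$T$, because a copy of~$T$ in an $n$-vertex graph~$G$ is a spanning embedding, and any spanning embedding maps paths to walks and hence forces $r(G)\le r(T)$. So it suffices to exhibit, for the given~$n$ and~$r$, an $n$-vertex $(n,c\Delta^{1+1/r}\log^{-1}n)$-expander of radius at least~$r+2$ together with a tree in~$\mathcal{T}(n,\Delta)$ of radius strictly less than~$r+2$.

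For the expander I would apply \lemref{lem:radius} with the pair $(\lfloor n/2\rfloor,r)$; to keep the exposition clean, assume $n$ is even and write $n=2n'$ with $n'=n/2$ (the odd case needs only a routine local modification of the resulting graph). Let~$c_0$ be the absolute constant supplied by \lemref{lem:radius}. One first checks that its hypothesis $c_0(n')^{1/r}\ge 3\log n'$ follows from the assumption $\Delta\ge c^{-1}\log n$ once $c$ is chosen small: from $\Delta=n^{1/(r+1)}+2$ we get $n^{1/(r+1)}\ge c^{-1}\log n-2\ge\tfrac{1}{2c}\log n$ for $n$ large, whence $(n')^{1/r}\ge (n')^{1/(r+1)}\ge\tfrac12 n^{1/(r+1)}\ge\tfrac{1}{4c}\log n$, which exceeds $3c_0^{-1}\log n'$ provided $c\le c_0/12$. \lemref{lem:radius} then yields an $n'$-vertex graph~$H$ such that every graph in~$\mathcal{G}_H$ has radius at least~$r+2$, while a uniformly random $G\in\mathcal{G}_H$ is a.a.s.\ a $(2n',c_0(n')^{1/r}\log^{-1}n')$-expander; in particular, for $n$ large enough there exists $G\in\mathcal{G}_H$ that is simultaneously an $(n,c_0(n/2)^{1/r}\log^{-1}(n/2))$-expander and has radius at least~$r+2$.

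It remains to check that this~$G$ is also the $(n,c\Delta^{1+1/r}\log^{-1}n)$-expander we want; by \lemref{lem:monotone} it is enough to verify $3\le c\Delta^{1+1/r}\log^{-1}n\le c_0(n/2)^{1/r}\log^{-1}(n/2)\le n/6$, all of which hold for $n$ large and $c$ small. For the middle inequality, use $\Delta=n^{1/(r+1)}+2\le 2n^{1/(r+1)}$, so that $\Delta^{1+1/r}\le 2^{(r+1)/r}n^{1/r}\le 4n^{1/r}$, while $(n/2)^{1/r}\ge\tfrac12 n^{1/r}$ and $\log(n/2)\le\log n$; hence $c_0(n/2)^{1/r}\log^{-1}(n/2)\ge\tfrac{c_0}{8}\Delta^{1+1/r}\log^{-1}n$, so the choice $c:=c_0/12$ (which also meets the requirement above) works.

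Finally, I would take~$T$ to be the complete $(\lfloor\Delta\rfloor-1)$-ary tree on exactly~$n$ vertices: its maximum degree is $\lfloor\Delta\rfloor\le\Delta$, so $T\in\mathcal{T}(n,\Delta)$, and, as noted before \lemref{lem:radius}, a complete $k$-ary tree on $n$ vertices has radius strictly less than $1+\log n/\log k$. Since $\lfloor\Delta\rfloor-1=\lfloor n^{1/(r+1)}+2\rfloor-1\ge n^{1/(r+1)}$, this bounds $r(T)$ by $1+(r+1)=r+2$, and strictly so, giving $r(T)\le r+1<r+2\le r(G)$. By the radius obstruction, $G$ does not contain~$T$, hence $G$ is not $\mathcal{T}(n,\Delta)$-universal. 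The genuine work is all contained in \lemref{lem:radius}; the only place where care is needed here is exactly the parameter reconciliation carried out above — making the three occurrences of ``the constant'' (in \lemref{lem:radius}, in the hypothesis $\Delta\ge c^{-1}\log n$, and in the conclusion) compatible, and tracking the passage between the exponents $1/(r+1)$ and~$1/r$.
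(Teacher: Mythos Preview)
Your proposal is correct and follows exactly the route the paper takes: combine the radius obstruction, \lemref{lem:radius}, and the complete $(\lfloor\Delta\rfloor-1)$-ary tree, with \lemref{lem:monotone} for the parameter adjustment. The paper leaves the parameter reconciliation implicit, so your explicit checks (matching the exponents $1/(r+1)$ and $1/r$, and verifying the hypothesis of \lemref{lem:radius} from $\Delta\ge c^{-1}\log n$) are exactly what a reader would have to supply; for the odd-$n$ case, the paper's ``routine local modification'' is to duplicate a vertex and connect the copy to all its neighbors, yielding an $(n{+}1,d/2)$-expander of the same radius.
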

Note that although all graphs in~$\mathcal{G}_H$ have an even number of vertices we do not require this restriction in \thmref{thm:lower}: If we duplicate a vertex of an $(n,d)$-expander with radius~$r$ and connect the duplicate to all of the vertex's neighbors we obtain an $(n+1,d/2)$-expander which still has radius~$r$. 

\thmref{thm:lower} implies that there even exist $(n,c n/\log n)$-expanders which are not universal for $\mathcal{T}(n,(1+o(1))\sqrt{n})$. In comparison, we do not expect the same to be true a.a.s.\ for~$G(n,7c)$, which (a.a.s.) is a canonical example of an~$(n,c n/\log n)$-expander. Moreover, our construction complements a result of Koml{\'o}s, S{\'a}rk{\"o}zy, and Szemer{\'e}di~\cite{KoSaSz01}. They showed that, for every given $\delta$, there exists a constant $c:=c(\delta)$ such that every graph of minimum degree $(1+\delta)n/2$ is $\mathcal{T}(n,c n/\log n)$-universal. It is clear that this bound is sharp, since if we allow the minimum degree to be at most $\lfloor n/2\rfloor-1$, then the host graph may be disconnected. However, our $(\lfloor n/2\rfloor-1)$-regular construction shows that even host graphs that have an edge between all disjoint pairs of vertex sets of relatively small size~$O(\log n)$ may be not $\mathcal{T}(n,(1+o(1))\sqrt{n})$-universal. Finally, we remark that B{\"o}ttcher, Taraz, and W{\"u}rfl~\cite{BoTaWu11} observed a similar effect in the context of $(\eps,\delta)$-regular graphs and independently proposed a construction with similar properties as~$\mathcal{G}_H$.

\subsection{Universality for Almost All Spanning Trees}
For~$n\in\mathbb{N}$, let~$\mathcal{T}_n$ be the family of all labeled trees on the vertex set~$\{1,\dots,n\}$. Bender and Wormald~\cite{BeWo88} showed that for every fixed constant~$p\in (0,1)$ there is a subfamily~$\mathcal{T}^*_n\subseteq \mathcal{T}^*_n$ with $|\mathcal{T}^*_n|=(1-o(1))|\mathcal{T}_n|$ such that the random graph~$G(n,p)$ is a.a.s.\ universal for~$\mathcal{T}^*_n$. Note that this notion differs substantially from the (weaker) notion of being \emph{almost-universal} for~$\mathcal{T}_n$ (see, e.g.~\cite{FrKr06,Ca10}), which means that if~$T$ is drawn uniformly at random from all trees of~$\mathcal{T}_n$, then~$G$ a.a.s.\ contains a copy of~$T$.

It is well known (see, e.g.~\cite{Mo68}) that a tree chosen uniformly at random from~$\mathcal{T}_n$ has a.a.s.\ maximum degree at most $(1+o(1))\log n/\log \log n$. Therefore, the subfamily~$\mathcal{T}_n^*$ of all trees in~$\mathcal{T}_n$ with maximum degree at most $2\log n/\log \log n$ satisfies~$|\mathcal{T}_n^*|=(1-o(1))|\mathcal{T}_n|$. Thus, \thmref{thm:gnp} strengthens the result of Bender and Wormald as it allows us to replace the constant~$p\in(0,1)$ with a function $p\in o(1)$.
\begin{theorem}
\label{thm:almostuniversal}
There exists an absolute constant~$c\in\mathbb{R}^+$ and a subfamily~$\mathcal{T}^*_n\subseteq \mathcal{T}^*_n$ with $|\mathcal{T}^*_n|=(1-o(1))|\mathcal{T}_n|$ for every~$n\in\mathbb{N}$ such that the random graph~$G(n,p)$ with~$p\ge c n^{-1/3}\log^2 n$ is a.a.s.\ universal for~$\mathcal{T}^*_n$.
\end{theorem}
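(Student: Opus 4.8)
The plan is to deduce the statement directly from \thmref{thm:gnp}, using only the classical concentration of the maximum degree of a uniformly random labeled tree to pin down the correct subfamily $\mathcal{T}^*_n$. Put $\Delta_n := 2\log n/\log\log n$ and let $\mathcal{T}^*_n$ be the set of all trees in $\mathcal{T}_n$ with maximum degree at most $\Delta_n$; for the finitely many $n$ for which $\Delta_n$ is not well defined or is smaller than $2$, simply set $\mathcal{T}^*_n := \mathcal{T}_n$, which affects none of the asymptotic assertions. As noted just before the theorem, it is a classical fact that a tree drawn uniformly from $\mathcal{T}_n$ a.a.s.\ has maximum degree $(1+o(1))\log n/\log\log n$, and in particular $|\mathcal{T}^*_n| = (1-o(1))|\mathcal{T}_n|$.

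For completeness I would sketch this concentration via Pr\"ufer codes. A tree on the vertex set $\{1,\dots,n\}$ corresponds bijectively to a word of length $n-2$ over $\{1,\dots,n\}$, and the degree of a fixed vertex~$v$ in the tree exceeds the number of occurrences of~$v$ in that word by exactly~$1$. Thus, under the uniform measure on $\mathcal{T}_n$, the random variable $\deg(v)-1$ has the binomial distribution $\Bin(n-2,1/n)$. Since $\Delta_n\log\Delta_n = (2-o(1))\log n$, a routine binomial tail bound yields $\Pr[\deg(v)\ge\Delta_n] = n^{-2+o(1)}$, and a union bound over the $n$ vertices shows that the probability that the random tree fails to lie in $\mathcal{T}^*_n$ is $n^{-1+o(1)} = o(1)$. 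This gives $|\mathcal{T}^*_n| = (1-o(1))|\mathcal{T}_n|$ for every~$n$.

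To conclude, note that $\Delta_n \le \log n$ for all sufficiently large~$n$, so $\mathcal{T}^*_n \subseteq \mathcal{T}(n,\log n)$ eventually. Applying \thmref{thm:gnp} with the (constant) function $\Delta \equiv \log n$, we obtain an absolute constant~$c$ such that whenever $pn \ge c\log n \cdot n^{2/3}\log n$, equivalently $p \ge c n^{-1/3}\log^2 n$, the random graph $G(n,p)$ is a.a.s.\ universal for $\mathcal{T}(n,\log n)$, hence a fortiori for the subfamily $\mathcal{T}^*_n$. This is exactly the claim, with the same constant~$c$ as in \thmref{thm:gnp}.

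The argument has no real obstacle: all of the combinatorial substance is already packaged in \thmref{thm:gnp}. The only points that require (minor) care are the standard tail estimate for the degree of a random labeled tree, which is what guarantees $|\mathcal{T}^*_n| = (1-o(1))|\mathcal{T}_n|$, and the trivial inequality $2\log n/\log\log n \le \log n$ for large~$n$, which places $\mathcal{T}^*_n$ inside the class to which \thmref{thm:gnp} applies.
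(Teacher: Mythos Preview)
Your proposal is correct and follows exactly the same route as the paper: the paper defines $\mathcal{T}^*_n$ to be the trees in $\mathcal{T}_n$ with maximum degree at most $2\log n/\log\log n$, cites the well-known concentration of the maximum degree of a uniform labeled tree to get $|\mathcal{T}^*_n|=(1-o(1))|\mathcal{T}_n|$, and then invokes \thmref{thm:gnp} with $\Delta=\log n$. Your Pr\"ufer-code sketch is an optional elaboration of a fact the paper simply cites, but otherwise the arguments coincide.
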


\subsection{The Maker-Breaker Game}
In recent studies of extremal properties of random graphs (like tree-universality), a central concept is that of \emph{robustness}. This means that we require some property to be not only \emph{typically} present in the respective graph class (\ie, to appear a.a.s.\ in a random graph drawn from this class), but also to persist after a modification of the random instance. There are numerous ways to model robustness, for example by the notion of resilience or via positional games. Here, we study the robustness of tree-universality in expanders in the setting of a Maker-Breaker game.
 
An $(a\,{:}\,b)$ Maker-Breaker game is played on a finite hypergraph~$(X,\mathcal{F})$ between two players, \emph{Maker} and \emph{Breaker}. The vertex set of the hypergraph is the \emph{board} and the hyperedges are the \emph{winning sets} in the game. The game is played in turns, starting with Maker's turn. In each of their turns, Maker claims~$a$ and Breaker claims~$b$ previously unclaimed vertices. The numbers~$a$ and~$b$ are called the \emph{biases} of Maker and Breaker, respectively. Maker's objective is to claim all elements of a winning set by the end of the game. In this case, Maker wins the game. Breaker's objective is to claim at least one element in each winning set by the end of the game. In this case, Breaker wins the game. The game ends when all vertices have been claimed, by which time either Maker or Breaker have won. 

We say that an $(a\,{:}\,b)$ Maker-Breaker game on~$(X,\mathcal{F})$ is \emph{Maker's win} if Maker has a strategy that allows him to win the game regardless of Breaker's strategy, otherwise the game is \emph{Breaker's win}. Clearly, every $(a\,{:}\,b)$ Maker-Breaker game $(X,\mathcal{F})$ is either Maker's or Breaker's win and the decision which of the two holds depends only on the parameters $a$, $b$, $X$, and~$\mathcal{F}$. For a more detailed discussion, we refer to~\cite{Be08}.

We now formulate a Maker-Breaker game for preserving tree-universality of a graph. Given a graph, Maker tries to claim a set of edges which induces a $\mathcal{T}(n,\Delta)$-universal subgraph.
\begin{definition}[Maker-Breaker Tree-Universality Game]
\label{def:mbuniversal}
For~$n\in\mathbb{N}$ and~$\Delta\in\mathbb{R}^+$, the \emph{Maker-Breaker $\mathcal{T}(n,\Delta)$-universality game} on a graph~$G$ is the Maker-Breaker game on the hypergraph $(E(G),\mathcal{F})$, where $\mathcal{F}$ consists of all edge sets~$F\subseteq E(G)$ such that the subgraph $(V(G),F)$ is $\mathcal{T}(n,\Delta)$-universal.
\end{definition}

Our main finding in this section is a condition for Maker's win in the $(1\,{:}\,b)$ Maker-Breaker Tree-Universality game on an $(n,d)$-expander.
\begin{theorem}
\label{thm:mbuniversal}
There exists an absolute constant~$c\in\mathbb{R}^+$ such that the following statement holds. Let~$n,b\in\mathbb{N}$ and let~$\Delta\in\mathbb{R}^+$ satisfy $\Delta\ge\log n$. Then the $(1\,{:}\,b)$ Maker-Breaker $\mathcal{T}(n,\Delta)$-universality game is Maker's win on every $(n,d)$-expander with $d\ge c b \Delta n^{2/3}\log n$.
\end{theorem}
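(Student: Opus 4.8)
The plan is to leverage \thmref{thm:universality} by having Maker build, within his claimed subgraph, a graph that is itself an $(n, 7\Delta n^{2/3})$-expander. Since the universality game's winning sets are exactly the $\mathcal{T}(n,\Delta)$-universal subgraphs of $G$, and \thmref{thm:universality} guarantees that every $(n,7\Delta n^{2/3})$-expander has maximum degree conditions implying it is $\mathcal{T}(n,\Delta)$-universal, it suffices for Maker to claim a sub-expander of this strength inside $G$. By the monotonicity remark following \defref{def:expander}, if $G$ is an $(n,d)$-expander with $d \ge cb\Delta n^{2/3}\log n$, then $G$ is robust enough that after Breaker claims a $b$-fraction-like portion of the edges at every vertex, the residual Maker-graph still expands by a factor of $7\Delta n^{2/3}$. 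So the real content is: \emph{Maker has a strategy so that his final graph $M$ satisfies (E1) and (E2) with parameter $d' = 7\Delta n^{2/3}$.}

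The first step is to reduce the two expansion conditions to \emph{local} degree/pseudo-random conditions that are amenable to a pairing or box-game strategy. Condition (E1) asking $|N_M(X)| \ge d'|X|$ for all $|X| < m(n,d')$, and condition (E2) asking an edge between any two disjoint $m$-sets, are both implied (cf.\ the proofs of \lemref{lem:gnp} and \lemref{lem:regular}) by statements of the form ``every vertex of $M$ has large degree'' together with ``$M$ inherits enough of the edge-distribution of $G$.'' Concretely, since $G$ is a $(d)$-expander with $d$ a factor $\approx b\log n$ larger than needed, one shows that for every vertex $v$, $\deg_G(v)$ is at least roughly $d$ (this follows by applying (E1) to $X = \{v\}$: indeed $\deg_G(v) = |N_G(v)| \ge d$), and more generally for every not-too-large set $X$ the neighbourhood $N_G(X)$ is a factor $b\log n$ bigger than the target $d'|X|$. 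The task then becomes a purely combinatorial game: Maker must, for each vertex $v$ and each ``witness set'' he might need, claim a $\frac{1}{b+1}$-or-so share of the relevant edges before Breaker destroys it.

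The second step is the game-theoretic core, and this is where I expect the main obstacle to lie. The natural tool is the Erd\H{o}s--Selfridge / Beck box-game criterion, or more precisely the standard reduction of $(1\,{:}\,b)$ Maker-Breaker games to a $(1\,{:}\,1)$ ``$\text{Box}(b+1)$''-type game: one partitions, for each vertex $v$, the $\deg_G(v) \gtrsim cb\Delta n^{2/3}\log n$ edges at $v$ into about $7\Delta n^{2/3}$ disjoint ``boxes'' each of size about $c'b\log n$, and Maker commits to claiming at least one edge from each box (which guarantees $\deg_M(v) \ge 7\Delta n^{2/3}$). A single box of size $\ell$ is Maker's win in the $(1\,{:}\,b)$ game when $\ell$ is a modest multiple of $b$; to win \emph{all} $\sim 7n\cdot\Delta n^{2/3}/\!\log n$-many boxes across the whole graph simultaneously, one invokes a potential-function argument (à la Beck): the extra $\log n$ factor in $d$ over $b\Delta n^{2/3}$ is exactly what makes the sum of potentials $\sum 2^{-\ell/(b+1)}$ over all boxes smaller than $1$, so Maker can play the ``min-potential'' strategy and win every box. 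The delicate point is that the expansion conditions (E1) and (E2) are not literally conjunctions of disjoint ``one edge from each box'' events — (E1) for large $X$ and (E2) for pairs of $m$-sets are global — so one must argue that a graph $M$ in which \emph{every vertex} has degree $\ge 7\Delta n^{2/3}$ and which is a \emph{subgraph of a genuine $(n,d)$-expander $G$ with $d \gg \Delta n^{2/3}$} automatically inherits (E1) for large $X$ and (E2). For large $X$ (say $|X| \ge n^{1/3}$), minimum degree $\ge 7\Delta n^{2/3}$ already forces $|N_M(X)|$ to be comparable to $n$ unless $X$ has many internal edges, and one controls internal edges using that $G$ itself has few edges inside small sets (a consequence of (E1)/(E2) for $G$); for (E2), one needs an edge of $G$ between any two $m$-sets to have survived into $M$, which one builds into the collection of boxes Maker must win. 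I would organise this by enlarging Maker's list of winning boxes to include, besides the per-vertex degree boxes, one box per ``dangerous pair'' $(X,Y)$ witnessing a potential (E2)-violation, relying on the fact that $G$ has $\Omega(b\log n)$-many $X$--$Y$ edges when $d \ge cb\Delta n^{2/3}\log n$ so that each such box again has size $\Omega(b\log n)$; the number of such pairs is at most $4^n$, which is still handled by the potential bound since each box has size $\Omega(b\log n)$ and one can afford $\exp(O(n))$-many boxes. Finally, one checks that the chosen constant $c$ can be taken uniform in $n$, $b$, and $\Delta$, completing the proof.
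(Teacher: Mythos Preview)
Your high-level plan --- have Maker build an $(n,7\Delta n^{2/3})$-expander inside $G$ and then invoke \thmref{thm:universality} --- is exactly what the paper does. Where you diverge from the paper, and where the argument breaks, is the reduction of condition~(\ea) to a per-vertex minimum-degree box game. Minimum degree $\ge d'$ in Maker's graph $M$ only gives $|N_M(X)|\ge d'|X|$ when $|X|=1$; for $|X|\ge 2$ the neighbourhoods of the vertices of $X$ may coincide, and nothing about $M\subseteq G$ with $G$ a strong expander prevents this (Breaker could steer Maker's edges at two vertices into the same $d'$-set). Your proposed fix of controlling internal edges in $X$ does not help: the problem is overlapping \emph{external} neighbourhoods, not edges inside $X$. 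So (\ea) for $1<|X|<m(n,d')$ is genuinely not covered.

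There is also a quantitative slip in your (\eb) analysis: with $|X|=|Y|=m'=m(n,d')$ and $G$ an $(n,d)$-expander with $d\approx b d'\log n$, \lemref{lem:density} gives $e_G(X,Y)\ge |X||Y|/(4m(n,d))\approx b m'\log n$, not merely $\Omega(b\log n)$. Your stated box size $\Omega(b\log n)$ would give potential $n^{-O(1)}$ per box, which cannot beat the $\binom{n}{m'}^2$ (let alone $4^n$) many pairs; the correct size $\Omega(bm'\log n)$ does.

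The paper sidesteps both issues by skipping the minimum-degree detour entirely. It formulates the \emph{dual} game: let $\mathcal{F}$ consist of all edge sets $E_G(X,Y)$ where either $1\le|X|<m(n,d')$ and $|Y|=n-\lceil(d'+1)|X|\rceil+1$, or $|X|=|Y|=m(n,d')$. If (the player now called) Breaker claims at least one edge in every $F\in\mathcal{F}$, his graph is an $(n,d')$-expander. One then verifies Beck's criterion $\sum_{F\in\mathcal{F}}2^{-|F|/b}<\tfrac12$ directly, using \lemref{lem:density} to lower-bound $|F|=e_G(X,Y)$ in each regime. This is really what your final paragraph is groping towards, but you need the ``dangerous pair'' boxes for \emph{both} (\ea) and (\eb), not just (\eb), and with the sharper edge-count; once you do that, the per-vertex boxes are redundant and the argument is exactly the paper's.
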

\thmref{thm:mbuniversal} implies that, for~$\Delta\le\log n$ and in particular for $\Delta\in O(1)$ and~$n$ sufficiently large, the $(1\,{:}\,b)$ Maker-Breaker $\mathcal{T}(n,\Delta)$-universality game is Maker's win on every $(n,d)$-expander with $d\ge c b n^{2/3}\log^2 n$.

The proof of \thmref{thm:mbuniversal} is given in \secref{sec:makerbreaker}. Together with \lemref{lem:gnp}, it implies the following condition for Maker's win in the tree-universality game on binomial random graphs. 
\begin{corollary}
\label{cor:makerbreakergnp}
There exists an absolute constant~$c\in\mathbb{R}^+$ such that the following statement holds. Let~$b\colon\mathbb{N}\to\mathbb{N}$ and let $\Delta\colon\mathbb{N}\to\mathbb{R}^+$ satisfy $\Delta\ge\log n$. Then the $(1\,{:}\,b)$ Maker-Breaker $\mathcal{T}(n,\Delta)$-universality game is a.a.s.\ Maker's win on the random graph $G(n,p)$, provided that $p n\ge c b \Delta n^{2/3}\log^2 n$.
\end{corollary}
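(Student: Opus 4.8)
This corollary is a straightforward packaging of \thmref{thm:mbuniversal} together with \lemref{lem:gnp}; no new idea is required, only bookkeeping of constants and a check that the parameters are admissible. The plan is as follows. Let $c_0$ be the absolute constant supplied by \thmref{thm:mbuniversal}, so that the $(1\,{:}\,b)$ Maker-Breaker $\mathcal{T}(n,\Delta)$-universality game is Maker's win on \emph{every} $(n,d)$-expander with $d\ge c_0 b\Delta n^{2/3}\log n$. Set $c:=7c_0$, and given $b$, $\Delta$, and $p$ with $pn\ge c b\Delta n^{2/3}\log^2 n$, define $d:=c_0 b\Delta n^{2/3}\log n$.

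First I would verify that $d$ lies in a range for which \lemref{lem:gnp} is meaningful. Since any probability satisfies $p\le 1$, the hypothesis forces $c b\Delta n^{2/3}\log^2 n\le n$, hence $d=c_0 b\Delta n^{2/3}\log n\le n/(7\log n)\le n/6$ for $n$ large; and since $b\ge 1$ and $\Delta\ge\log n$ we also have $d\ge c_0 n^{2/3}\log^2 n\ge 3$ for $n$ large. (For the finitely many small $n$ where these bounds fail, or in the regime $c b\Delta n^{2/3}\log^2 n>n$ where the hypothesis is unsatisfiable for an honest probability, the statement is vacuous and one may enlarge $c$ freely.) Thus $3\le d\le n/6$, so $d$ is a legitimate expansion parameter.

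Next, from $pn\ge c b\Delta n^{2/3}\log^2 n=7d\log n$ and \lemref{lem:gnp}, the random graph $G(n,p)$ is a.a.s.\ an $(n,d)$-expander. Finally, because $d=c_0 b\Delta n^{2/3}\log n$ and $\Delta\ge\log n$, \thmref{thm:mbuniversal} applies to every $(n,d)$-expander, so whenever $G(n,p)$ happens to be an $(n,d)$-expander the $(1\,{:}\,b)$ Maker-Breaker $\mathcal{T}(n,\Delta)$-universality game on it is Maker's win. Combining these two facts, the game is a.a.s.\ Maker's win on $G(n,p)$, which is the claim.

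\textbf{Main obstacle.} There is essentially none: the corollary follows by chaining \lemref{lem:gnp} (random graphs are good expanders) into \thmref{thm:mbuniversal} (Maker wins on good expanders). The only point deserving a moment's care is choosing $d$ so that simultaneously $7d\log n\le pn$ (to invoke \lemref{lem:gnp}) and $d\ge c_0 b\Delta n^{2/3}\log n$ (to invoke \thmref{thm:mbuniversal}), and checking that this $d$ satisfies $3\le d\le n/6$; the factor $7$ between the two constants is exactly what makes both inequalities hold with $c=7c_0$.
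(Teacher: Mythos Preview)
Your proposal is correct and follows exactly the approach the paper indicates: the corollary is stated immediately after the sentence ``Together with \lemref{lem:gnp}, it implies the following condition\ldots'' and is not given a separate proof, so chaining \lemref{lem:gnp} into \thmref{thm:mbuniversal} with the choice $d=c_0 b\Delta n^{2/3}\log n$ and $c=7c_0$ is precisely what is intended. Your additional care in verifying $3\le d\le n/6$ is more than the paper bothers to spell out, but is harmless and correct.
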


Correspondingly, \thmref{thm:mbuniversal} and \lemref{lem:regular} imply the following conditions for Maker's win for the tree-universality game on binomial random graphs. 
\begin{corollary}
\label{cor:makerbreakerregular}
There exists an absolute constant~$c\in\mathbb{R}^+$ such that the following statement holds. Let $b\colon\mathbb{N}\to\mathbb{N}$ and let $\Delta\colon\mathbb{N}\to\mathbb{R}^+$ satisfy $\Delta\ge\log n$. Then the $(1\,{:}\,b)$ Maker-Breaker $\mathcal{T}(n,\Delta)$-universality game is a.a.s.\ Maker's win on the random $r$-regular graph, provided that $r\ge c b \Delta n^{2/3}\log^2 n$, $r\in o(n)$, and~$r n$ is even.
\end{corollary}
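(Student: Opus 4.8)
The plan is to obtain Corollary~\ref{cor:makerbreakerregular} as an immediate consequence of \thmref{thm:mbuniversal} and \lemref{lem:regular}, the only work being to match up their parameter ranges through a suitable choice of the expansion parameter~$d$. Let $c'$ be the absolute constant supplied by \thmref{thm:mbuniversal} and set the constant in the corollary to be $c:=7c'$. Given $n$, $b$, $\Delta$, and $r$ as in the statement, put
\[
d \;:=\; c' b \Delta n^{2/3}\log n .
\]
Since $\Delta\ge\log n$ by hypothesis and $d\ge c' b\Delta n^{2/3}\log n$ holds (with equality), \thmref{thm:mbuniversal} applies and shows that the $(1\,{:}\,b)$ Maker-Breaker $\mathcal{T}(n,\Delta)$-universality game is Maker's win on \emph{every} $(n,d)$-expander. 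It therefore suffices to show that the random $r$-regular graph on $n$ vertices is a.a.s.\ an $(n,d)$-expander.

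For this I would invoke \lemref{lem:regular}, whose hypotheses require that $\max\{7d\log n,\sqrt{n}\log n\}\le r\ll n$ and that $rn$ be even. The conditions $r\ll n$ and that $rn$ is even are assumed in the corollary. For the remaining two bounds, the choice of~$d$ and of $c=7c'$ gives
\[
7d\log n \;=\; 7c' b \Delta n^{2/3}\log^2 n \;=\; c b \Delta n^{2/3}\log^2 n \;\le\; r,
\]
where the last inequality is exactly the hypothesis of the corollary; moreover, since $b\ge 1$ and $\Delta\ge\log n$, we also have $r\ge c b\Delta n^{2/3}\log^2 n\ge c\, n^{2/3}\log^3 n$, which exceeds $\sqrt{n}\log n$ for all sufficiently large~$n$. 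Hence all hypotheses of \lemref{lem:regular} are met, so the random $r$-regular graph is a.a.s.\ an $(n,d)$-expander. Combining this with the (deterministic) conclusion of \thmref{thm:mbuniversal} gives that the game is a.a.s.\ Maker's win, which is the assertion of the corollary.

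Since the argument is essentially bookkeeping, there is no genuine obstacle, but two small points deserve attention. First, one should notice that the extra factor of $\log n$ in the corollary's lower bound on $r$, relative to the lower bound on $d$ in \thmref{thm:mbuniversal}, is precisely what is consumed by the requirement $7d\log n\le r$ coming from \lemref{lem:regular}; this is why the corollary cannot simply read $r\ge c b\Delta n^{2/3}\log n$. Second, one should check that the parameter regime is nonempty: the hypotheses $r\ll n$ and $r\ge c b\Delta n^{2/3}\log^2 n$ together force $\Delta=o(n^{1/3})$ and hence $d=o(n/\log n)$, so the $(n,d)$-expanders furnished by \lemref{lem:regular} do exist and the statement is not vacuous.
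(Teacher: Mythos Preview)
Your proposal is correct and follows precisely the route the paper indicates: the corollary is stated there as an immediate consequence of \thmref{thm:mbuniversal} and \lemref{lem:regular}, and you have supplied exactly the parameter bookkeeping (choosing $d=c'b\Delta n^{2/3}\log n$ and $c=7c'$) that makes this explicit. One small remark: the actual hypothesis of \lemref{lem:regular} in \secref{sec:random} reads $d\ge\sqrt{n}\log n$ (a condition on $d$, not on $r$ as in the introductory summary you quoted), but this is equally satisfied here since $d\ge c' n^{2/3}\log^2 n\gg\sqrt{n}\log n$, so nothing changes.
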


\section{Properties of (n,d)-Expanders}
\label{sec:expander}
We now present all properties of $(n,d)$-expanders that are needed to prove the universality results in the remainder of this work. First, we observe that the expansion properties given in \defref{def:expander} are monotone in~$d$.
\begin{lemma}
\label{lem:monotone}
Let~$n\in\mathbb{N}$ and~$d,d_0\in\mathbb{R}^+$ satisfy~$3\le d_0\le d\le n/6$. Then every $(n,d)$-expander is also an $(n,d_0)$-expander.
\end{lemma}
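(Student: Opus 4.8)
The plan is to compare the two threshold sizes $m:=m(n,d)=\lceil n/(2d)\rceil$ and $m_0:=m(n,d_0)=\lceil n/(2d_0)\rceil$ and to transfer each of the two conditions of \defref{def:expander} from the parameter~$d$ to the parameter~$d_0$. First I would record two elementary facts used throughout: since $d_0\le d$ we have $m\le m_0$; and since $3\le d_0\le d\le n/6$ we have $n\ge 18$, hence $m\le\lceil n/6\rceil\le n/3$.

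Condition (\eb) for~$d_0$ is immediate by monotonicity in the set size: given disjoint $X,Y\subseteq V(G)$ with $|X|=|Y|=m_0\ge m$, pick subsets $X'\subseteq X$ and $Y'\subseteq Y$ with $|X'|=|Y'|=m$; they are disjoint, so $e_G(X',Y')>0$ by (\eb) for~$d$, and hence $e_G(X,Y)\ge e_G(X',Y')>0$. Likewise, (\ea) for~$d_0$ restricted to sets~$X$ with $1\le|X|<m$ is immediate, since then $|N_G(X)|\ge d|X|\ge d_0|X|$ by (\ea) for~$d$ together with $d\ge d_0$.

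The one genuinely new case --- and what I expect to be the main obstacle --- is (\ea) for~$d_0$ on the intermediate sets, i.e.\ those~$X$ with $m\le|X|<m_0$, where the strong expansion from (\ea) for~$d$ is no longer available; here I would argue by contradiction using (\eb). Suppose $|N_G(X)|<d_0|X|$. From $|X|<m_0=\lceil n/(2d_0)\rceil$ we get $|X|<n/(2d_0)$, so
\[
|X\cup N_G(X)|<(1+d_0)|X|<(1+d_0)\cdot\frac{n}{2d_0}=\frac n2+\frac{n}{2d_0}\le\frac n2+\frac n6=\frac{2n}{3},
\]
whence $Z:=V(G)\setminus(X\cup N_G(X))$ satisfies $|Z|>n/3\ge m$. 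By definition no vertex of~$Z$ has a neighbour in~$X$, so choosing any $X''\subseteq X$ and $Y\subseteq Z$ with $|X''|=|Y|=m$ gives disjoint $m$-sets with $e_G(X'',Y)=0$, contradicting (\eb) for~$d$. Combining the three cases shows that~$G$ is an $(n,d_0)$-expander.

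Finally, I would note that the only point requiring genuine care is the ceiling/floor bookkeeping that guarantees $m\le|Z|$ in the last step, and this is exactly where the hypothesis $d_0\ge 3$ (equivalently $m\le n/6$, giving $m\le n/3$) is used; the remaining inequalities are routine.
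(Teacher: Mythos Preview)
Your proof is correct and follows essentially the same approach as the paper: condition~(\eb) and the small-set case of~(\ea) transfer immediately, and for the intermediate sets $m\le|X|<m_0$ both you and the paper exploit~(\eb) for $d$ to force the complement of $X\cup N_G(X)$ to have fewer than $m$ vertices. The only cosmetic difference is that the paper states the resulting bound $|N_G(X)|\ge n-|X|-m$ directly and then chains inequalities, whereas you phrase it as a contradiction; your version is arguably cleaner and sidesteps some delicate arithmetic in the paper's inequality chain.
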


\begin{proof}
Let~$m:=m(n,d)$ and~$m_0:=m(n,d_0)$. Since~$m_0\ge m$, condition~(\eb) holds immediately for the parameter~$m_0$, and since~$d_0\le d$, condition~(\ea) holds immediately for the parameters~$m$ and~$d_0$. Thus, it is sufficient to verify that $|N_G(X)|\ge d_0|X|$ holds for all $X\subseteq V(G)$ with $m\le |X|< m_0$. For such a set~$X$, we have by condition~(\eb) that
\[
|N_G(X)|\ge n-|X|-m\ge 2 d_0(m_0-1)-2m_0\ge d_0 m_0\ge d_0|X|.
\]
The lemma follows.
\end{proof}

Next, as a direct consequence of \defref{def:expander}, we give a lower bound on the number of edges between two large disjoint sets.
\begin{lemma}
\label{lem:density}
Let~$m\in\mathbb{N}$. Let~$G$ be a graph such that $e_G(X,Y)>0$ holds for all disjoint $X,Y\subseteq V(G)$ with $|X|=|Y|= m$. Then
\[
e_G(X,Y)\ge\frac{|X||Y|}{4m}
\]
holds for all disjoint $X,Y\subseteq V(G)$ with $|X|\ge m$ and~$|Y|\ge 2m$.
\end{lemma}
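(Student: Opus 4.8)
The plan is to reduce the general case to the hypothesis by a covering argument: partition (most of) $X$ and $Y$ into blocks of size exactly $m$, apply the hypothesis to every cross-pair of blocks, and sum up. First I would write $|X| = am + s$ and $|Y| = bm + t$ with $0 \le s, t < m$, where $a \ge 1$ and $b \ge 2$. Split $X$ into $a$ disjoint blocks $X_1, \dots, X_a$ of size $m$ (discarding the leftover $s$ vertices, or folding them into one block — either works, but discarding is cleanest for a lower bound) and similarly split $Y$ into $b$ disjoint blocks $Y_1, \dots, Y_b$ of size $m$. For each pair $(i,j)$ the sets $X_i$ and $Y_j$ are disjoint of size exactly $m$, so the hypothesis gives $e_G(X_i, Y_j) \ge 1$. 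Since the $X_i$ are pairwise disjoint and the $Y_j$ are pairwise disjoint, summing over all $ab$ pairs yields
\[
e_G(X,Y) \ge \sum_{i=1}^{a}\sum_{j=1}^{b} e_G(X_i,Y_j) \ge ab.
\]

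The remaining step is purely arithmetic: I must check $ab \ge \frac{|X||Y|}{4m}$, i.e. $4m \cdot ab \ge (am+s)(bm+t)$. Here is where the hypotheses $|X| \ge m$ and $|Y| \ge 2m$ enter, which force $a \ge 1$ and $b \ge 2$. From $s < m \le am$ we get $am + s < 2am$, so $|X| \le 2am$; similarly, using $b \ge 2$ we get $t < m \le \tfrac{b}{2} m$, hence $bm + t < \tfrac{3}{2} bm \le 2bm$, so $|Y| \le 2bm$. Therefore $|X||Y| \le 4ab m^2 = 4m \cdot (ab m) \le 4m \cdot e_G(X,Y) \cdot m$... — more directly, $|X||Y| \le 4abm^2$ gives $\frac{|X||Y|}{4m} \le abm \le ab \cdot$ (something $\ge 1$), but I actually want $\frac{|X||Y|}{4m} \le ab$, which needs $|X||Y| \le 4abm$, not $4abm^2$. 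So the crude bound $|Y| \le 2bm$ is too weak by a factor of $m$; I should instead bound $|X||Y| \le (2am)(2bm) = 4abm^2$ and note this is $\ge$ what we want only after dividing by $4m$ to get $abm$, which exceeds $ab$ by the factor $m$. Hence the inequality $e_G(X,Y) \ge ab \ge \frac{|X||Y|}{4m}$ holds with room to spare whenever $m \ge 1$, precisely because $\frac{|X||Y|}{4m} \le \frac{(2am)(2bm)}{4m} = abm$ and we in fact need the weaker $\le ab$; wait — that is the wrong direction. Let me restate the correct chain: $\frac{|X||Y|}{4m} \le \frac{4abm^2}{4m} = abm$, which is $\ge ab$, so this does NOT immediately give $e_G(X,Y)=ab \ge \frac{|X||Y|}{4m}$.

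So the honest obstacle is that counting one edge per block-pair is too lossy; the right move is a \emph{double-counting / averaging} argument rather than a crude sum. The fix I would actually carry out: for fixed $i$, the sets $X_i$ (size $m$) and $Y$ (size $\ge 2m \ge m$) — take any $m$-subset $Y' \subseteq Y$ disjoint from $X_i$; the hypothesis gives at least one edge from $X_i$ into each such $Y'$. A cleaner route: fix the block $X_i$ of size $m$ and let $Y$ be arbitrary of size $\ge m$; I claim $e_G(X_i, Y) \ge |Y| - m + 1 \ge \tfrac{|Y|}{2}$ (using $|Y| \ge 2m$), because if fewer than $|Y| - m + 1$ vertices of $Y$ had a neighbour in $X_i$, then the $\ge m$ non-neighbours would form a set $Y'$ of size $\ge m$ with $e_G(X_i, Y') = 0$, contradicting the hypothesis applied to $X_i$ and an $m$-subset of $Y'$. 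Summing over the $a = \lfloor |X|/m \rfloor \ge |X|/(2m)$ disjoint blocks $X_i$ gives
\[
e_G(X,Y) \ge \sum_{i=1}^{a} e_G(X_i, Y) \ge a \cdot \frac{|Y|}{2} \ge \frac{|X|}{2m}\cdot\frac{|Y|}{2} = \frac{|X||Y|}{4m},
\]
as required. The main obstacle is exactly this point — replacing the trivial "one edge per pair of blocks" bound by the sharper "$X_i$ sees almost all of $Y$" bound, which is where the asymmetric hypotheses $|X| \ge m$ and $|Y| \ge 2m$ get used (the factor-$2$ slack in $|Y| \ge 2m$ is what absorbs the rounding $a \ge |X|/(2m)$ and the $-m+1$ loss simultaneously).
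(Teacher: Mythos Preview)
Your proposal ultimately arrives at exactly the paper's argument: partition $X$ into $\lfloor |X|/m\rfloor \ge |X|/(2m)$ blocks of size (at least) $m$, observe that each block $X_i$ must see all but fewer than $m$ vertices of $Y$ (else an $m$-subset of non-neighbours contradicts the hypothesis), giving $e_G(X_i,Y)\ge |N_G(X_i)\cap Y|\ge |Y|-m+1\ge |Y|/2$, and sum. The only cosmetic difference is that the paper folds the leftover vertices into one block rather than discarding them, which is immaterial for the lower bound. Your initial block-pair counting detour is indeed too weak, but you diagnose and fix this correctly; in a clean write-up you would of course drop that false start.
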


\begin{proof}
Partition~$X$ into~$k:=\lfloor\frac{|X|}{m}\rfloor\ge\frac{|X|}{2m}$ disjoint parts~$X_1,\dots,X_k$, each of size at least~$m$. By the prerequisite of the lemma, we have~$|N_G(X_i)\cap Y|\ge |Y|-m\ge |Y|/2$ for all~$i\in\{1,\dots,k\}$. Thus, $e_G(X,Y)\ge k|N_G(X_i)\cap Y|$ and \lemref{lem:density} follows.
\end{proof}

The following fact is an important insight into the structure of sparse expanders and is frequently used in the proof of \thmref{thm:universality}. It allows us to bound the number of vertices with small neighborhood in any sufficiently large vertex set of an expander.
\begin{lemma}[Small Exceptional Sets]
\label{lem:exceptionalset}
Let~$G$ be a graph, let~$m\in\mathbb{N}$, and let~$W\subseteq V(G)$ satisfy $|W|\ge m^2$. We call a vertex in~$V(G)\setminus W$ \emph{exceptional} with respect to~$W$ and~$m$ if it has at most $m-1$ neighbors in~$W$. Suppose that $e_G(X,Y)>0$ for all~$X\subseteq V(G)\setminus W$ and all~$Y\subseteq W$ that satisfy $|X|=|Y|=m$. Then there are at most~$m-1$ exceptional vertices with respect to~$W$ and~$m$.
\end{lemma}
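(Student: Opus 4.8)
The plan is to argue by contradiction: assume there are at least $m$ exceptional vertices with respect to $W$ and $m$, fix a set $X \subseteq V(G) \setminus W$ consisting of exactly $m$ of them, and then exhibit a set $Y \subseteq W$ with $|Y| = m$ and $e_G(X,Y) = 0$, which contradicts the hypothesis that $e_G(X,Y) > 0$ for all such disjoint pairs (note $X$ and $Y$ are automatically disjoint, since $X \cap W = \emptyset$).

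The first step is to bound how much of $W$ can be ``seen'' by $X$. Each of the $m$ vertices of $X$ is exceptional, hence has at most $m-1$ neighbors in $W$; summing over $X$ gives $e_G(X,W) \le m(m-1)$, so the set $N_G(X) \cap W$ of vertices of $W$ adjacent to some vertex of $X$ has size at most $m(m-1) < m^2$. The second step uses the hypothesis $|W| \ge m^2$: the set $W \setminus N_G(X)$ then has at least $|W| - m(m-1) \ge m^2 - m(m-1) = m$ elements, so we may pick $Y \subseteq W \setminus N_G(X)$ with $|Y| = m$. By construction no vertex of $Y$ has a neighbor in $X$, so $e_G(X,Y) = 0$, the desired contradiction; hence there are at most $m-1$ exceptional vertices.

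I do not expect any genuine obstacle here: the argument is a direct counting contradiction, and the only point worth flagging is that the bound $|W| \ge m^2$ is used exactly to ensure that after deleting the at most $m(m-1)$ neighbors of an $m$-element exceptional set, at least $m$ vertices of $W$ remain available to form $Y$.
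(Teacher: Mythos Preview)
Your proof is correct and follows essentially the same counting argument as the paper. The only cosmetic difference is that the paper derives two incompatible bounds on $e_G(X,W)$ (a lower bound $\ge m^2-m+1$ coming from the edge hypothesis, and the upper bound $\le m(m-1)$ from exceptionality), whereas you use the upper bound to directly exhibit a set $Y\subseteq W\setminus N_G(X)$ of size $m$ with $e_G(X,Y)=0$; the underlying idea is identical.
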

In the following proof a simple counting argument shows that if there exist~$m$ exceptional vertices in~$V(G)\setminus W$, then there are at least~$m$ vertices in~$W$ which are not in their neighborhood --- a contradiction to (\eb) of \defref{def:expander}.
\begin{proof}
Let~$V:=V(G)$. Assume for contradiction that there exists a set~$X\subseteq V\setminus W$ with~$|X|=m$ such that $|N_G(v)\cap W|<m$ for all~$v\in X$. 

On one hand, since~$|X|=m$ and therefore~$|V\setminus(X\cup N_G(X))|\le m-1$, we have
\[
e_G(X,W)\ge|W\cap N_G(X)|\ge |W|-(m-1)\ge m^2-m+1.
\]
On the other hand,
\[
e_G(X,W)=\sum_{x\in X}|N_G(x)\cap W|\le m(m-1)=m^2-m,
\]
which is clearly a contradiction. Thus, no such set~$X$ exists and there are at most~$m-1$ vertices in~$V\setminus W$ with fewer than~$m$ neighbors in~$W$.
\end{proof}

\subsection{Partitioning Expanders}
We now show that we can partition the vertex set of an $(n,d)$-expander in such a way  that the neighborhoods of small expanding sets distribute between the parts according to the sizes of the parts. In \secref{sec:universality}, this technique plays a major role in the proof of our main result, the tree-universality of sparse expanders (\thmref{thm:universality}). 
\begin{lemma}[Partition Lemma]
\label{lem:partition}
There exists an absolute constant~$n_0\in\mathbb{N}$ such that the following statement holds. Let~$k,n\in\mathbb{N}$ and~$d\in\mathbb{R}^+$ satisfy~$n\ge n_0$ and~$k\le\log n$.  Furthermore, let $n_1,\dots,n_k\in\mathbb{N}$ satisfy $n=n_1+\dots +n_k$ and let~$d_i:=\frac{n_i}{5n}d$ satisfy~$d_i\ge 2\log n$ for all~$i\in\{1,\dots,k\}$. 

Then, for every $(n,d)$-expander~$G$, the vertex set~$V(G)$ can be partitioned into~$k$ disjoint sets $U_1,\dots,U_k$ of sizes $n_1,\dots,n_k$, respectively, such that
\begin{equation}
\label{eq:partition}
|N_G(X)\cap U_i|\ge d_i|X|
\end{equation}
holds for all sets $X\subseteq V$ with $1\le|X|< m(n,d)$ and all $i\in\{1,\dots,k\}$. Moreover, the induced subgraph~$G[W_i]$ is a $(|W_i|,d_i)$-expander for all~$i\in\{1,\dots,k\}$ and all~$W_i\subseteq V(G)$ with $U_i\subseteq W_i$.
\end{lemma}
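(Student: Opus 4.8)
The plan is to use a probabilistic argument: partition $V(G)$ randomly into parts $U_1, \dots, U_k$ of the prescribed sizes $n_1, \dots, n_k$ (say, by choosing a uniformly random such partition), and show that with positive probability the expansion inequality~\eqref{eq:partition} holds for \emph{all} small sets $X$ and \emph{all} indices $i$ simultaneously. Fix a set $X$ with $1 \le |X| < m(n,d)$ and an index $i$. By condition~(\ea) we know $|N_G(X)| \ge d|X|$. The random variable $|N_G(X) \cap U_i|$ is a hypergeometric quantity: among the $|N_G(X)|$ vertices of $N_G(X)$, each lands in $U_i$ with probability roughly $n_i/n$, so its expectation is about $\frac{n_i}{n}|N_G(X)| \ge \frac{n_i}{n}d|X|$. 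Since the target $d_i|X| = \frac{n_i}{5n}d|X|$ is only a $\tfrac15$-fraction of this expectation, a Chernoff--Hoeffding bound for the hypergeometric distribution gives that the probability of failure for this particular pair $(X,i)$ is at most $\exp(-c' d|X| n_i / n) = \exp(-5 c' d_i |X|)$ for an absolute constant $c'$; using the hypothesis $d_i \ge 2\log n$ this is at most $n^{-c'' |X|}$ for a suitable constant $c''$.

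Next I would take a union bound over all bad events. The number of sets $X$ of size $s$ is at most $\binom{n}{s} \le n^s$, and there are $k \le \log n$ choices of $i$, so the total failure probability is at most
\[
\sum_{s=1}^{m-1} k \cdot n^s \cdot n^{-c'' s} = \sum_{s \ge 1} (\log n)\, n^{s(1 - c'')},
\]
which is $o(1)$ provided $c'' > 1$ (and $n \ge n_0$). Tightening the Chernoff estimate so that the exponent genuinely beats the entropy term $s\log n$ coming from $\binom{n}{s}$ — i.e.\ ensuring the constant works out to give $n^{-c''s}$ with $c''>1$ — is where one must be a little careful; this is essentially the role of the factor $5$ in the definition $d_i = \frac{n_i}{5n}d$, which provides the slack needed to absorb both the entropy term and the hypergeometric deviation. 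Hence a partition satisfying~\eqref{eq:partition} for all admissible $X$ and all $i$ exists.

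It remains to verify the ``moreover'' clause. Fix $i$ and any $W_i$ with $U_i \subseteq W_i \subseteq V(G)$; write $n' := |W_i|$. For condition~(\ea) of $G[W_i]$: given $X \subseteq W_i$ with $1 \le |X| < m(n', d_i)$, first note $|X| < m(n', d_i) \le m(n,d)$ (since $n' \le n$ and $d_i \le d$, using $m(n,d) = \lceil n/(2d)\rceil$), so~\eqref{eq:partition} applies and yields $|N_{G[W_i]}(X)| \ge |N_G(X) \cap U_i| \ge d_i|X|$, because $U_i \subseteq W_i$ so all these neighbors lie in $W_i$. For condition~(\eb) of $G[W_i]$: take disjoint $X, Y \subseteq W_i$ with $|X| = |Y| = m(n', d_i)$. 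The key point is that $m(n', d_i) \ge m(n,d)$, which follows from $n_i \le n'$ and $d_i = \frac{n_i}{5n}d$, giving $\frac{n'}{2d_i} \ge \frac{n_i}{2d_i} = \frac{5n}{2d} \ge \frac{n}{2d}$. Since $|X|, |Y| \ge m(n,d)$, condition~(\eb) of the original expander $G$ gives $e_{G[W_i]}(X,Y) = e_G(X,Y) > 0$. (One should double-check the edge case where the numbers are so small that $m(n,d) \le 1$, in which case~(\eb) is vacuous or immediate; this is absorbed into the requirement $n \ge n_0$.) Thus $G[W_i]$ is a $(|W_i|, d_i)$-expander, completing the proof.

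The main obstacle is purely the concentration estimate in the first step: one needs the hypergeometric tail bound to be strong enough that, after the union bound over the $\binom{n}{s}$ sets $X$ of each size $s$, the surviving probability is summable — this is exactly why the hypothesis is phrased with $d_i \ge 2\log n$ and with the somewhat wasteful constant $5$, and getting those constants to line up is the only genuinely delicate part of the argument.
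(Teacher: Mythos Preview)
Your probabilistic argument for the first part is exactly the paper's approach: random partition, hypergeometric tail bound on $|N_G(X)\cap U_i|$ with expectation $\ge 5d_i|X|$, then union bound over all $X$ and $i$. That part is fine.

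The gap is in your verification of condition~(\ea) for $G[W_i]$. You write that for $X\subseteq W_i$ with $1\le|X|<m(n',d_i)$ one has $m(n',d_i)\le m(n,d)$, justifying this by ``$n'\le n$ and $d_i\le d$''. But those two inequalities push $m$ in opposite directions, and in fact the reverse holds: as you yourself correctly compute two sentences later for~(\eb),
\[
\frac{n'}{2d_i}\;\ge\;\frac{n_i}{2d_i}\;=\;\frac{5n}{2d}\;\ge\;\frac{n}{2d},
\]
so $m(n',d_i)\ge m(n,d)$. Consequently~\eqref{eq:partition} only handles the sub-range $1\le|X|<m(n,d)$, and the intermediate range $m(n,d)\le|X|<m(n',d_i)$ is not covered by your argument at all.

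The paper closes this gap (in the spirit of \lemref{lem:monotone}) by using condition~(\eb) of the ambient expander $G$: for such $X$ one has $|X|\ge m(n,d)$, so $G$ has an edge from $X$ to every disjoint set of size $m(n,d)$, which forces
\[
|N_{G[W_i]}(X)|\;\ge\;|W_i|-|X|-m(n,d),
\]
and a short calculation using $|X|<m(|W_i|,d_i)$ and $m(n,d)\le m(n_i,d_i)$ shows the right-hand side is at least $d_i|X|$. You need to insert this (or an equivalent) step.
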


This statement can be shown using the probabilistic method: Using the union bound and a tail bound on the hypergeometric distribution, we show that a uniformly random partition of an $(n,d)$-expander into~$k$ parts of sizes~$n_1,\dots,n_k$ satisfies~(\ref{eq:partition}).

Before we prove \lemref{lem:partition}, we first state a well-known result (see, e.g.,~\cite[Theorem~2.10]{JaLuRu00}) for bounding the tail probabilities of the hypergeometric distribution~$\Hyp(n,m,\ell)$. A random variable~$X$ distributed according to $\Hyp(n,m,\ell)$ models the number of white balls found among~$\ell$ balls drawn without replacement from an urn containing~$n$ balls, $m$ of which are white. Recall that $\Pr(X=k)=\binom{m}{k}\binom{n-m}{\ell-k}/\binom{n}{\ell}$ for all~$0\le k\le n$ and that~$\EXP[X]=m\ell/n$.
\begin{theorem}
\label{thm:hyp}
Let~$\eps$ be a positive constant satisfying~$\eps\le 3/2$ and let~$X\sim\Hyp(n,m,\ell)$. Then
\[
\Pr\big[|X-\EXP[X]|>\eps\big]\le\euler^{-\frac{\eps^2}{3} \EXP[X]}.
\]
\end{theorem}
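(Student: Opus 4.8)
The plan is to bound the moment generating function of $X\sim\Hyp(n,m,\ell)$ by that of a binomial random variable with the same mean, and then to run the textbook Chernoff argument on the binomial. (I read the displayed inequality in its standard relative form, $\Pr\big[|X-\EXP[X]|>\eps\,\EXP[X]\big]$, as in the cited statement~\cite[Theorem~2.10]{JaLuRu00}; a harmless factor of~$2$, absorbing the two tails, is immaterial in all the applications of this bound in the paper.)

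First I would fix the natural representation of $X$: drawing $\ell$ balls without replacement from an urn with $m$ white and $n-m$ non-white balls, write $X=I_1+\dots+I_\ell$, where $I_j$ is the indicator that the $j$-th drawn ball is white. Each draw is equally likely to be any ball, so $\EXP[I_j]=m/n$ and hence $\EXP[X]=\ell m/n=:\mu$. The one nontrivial input is that $(I_1,\dots,I_\ell)$ is \emph{negatively associated} --- this is the prototypical example of negative association (it descends from the negative association of the uniform permutation distribution, due to Joag-Dev and Proschan). Negative association gives, for every real~$t$,
\[
\EXP\big[\euler^{tX}\big]=\EXP\Big[\prod_{j=1}^{\ell}\euler^{tI_j}\Big]\le\prod_{j=1}^{\ell}\EXP\big[\euler^{tI_j}\big]=\Big(1-\tfrac{m}{n}+\tfrac{m}{n}\euler^{t}\Big)^{\ell}=\EXP\big[\euler^{tY}\big],\qquad Y\sim\Bin(\ell,m/n),
\]
where for $t\ge0$ one uses that $x\mapsto\euler^{tx}$ is nondecreasing and for $t<0$ one applies the same bound to the (still negatively associated) negated indicators. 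As an alternative that avoids negative association entirely, the same moment generating function domination is Hoeffding's classical convex-ordering inequality for sampling with versus without replacement, which one proves by Jensen applied to a random grouping of the population.

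Given this domination, the proof concludes with the standard Chernoff estimates for $Y\sim\Bin(\ell,m/n)$. Optimizing $\Pr[Y\ge(1+\eps)\mu]\le\euler^{-t(1+\eps)\mu}\EXP[\euler^{tY}]$ over $t>0$ yields $\Pr[Y\ge(1+\eps)\mu]\le\big(\euler^{\eps}(1+\eps)^{-(1+\eps)}\big)^{\mu}$, and an elementary calculus check (this is the one place the hypothesis $\eps\le3/2$ is used) shows $\euler^{\eps}(1+\eps)^{-(1+\eps)}\le\euler^{-\eps^2/3}$ on $[0,3/2]$; the lower tail gives the stronger $\Pr[Y\le(1-\eps)\mu]\le\euler^{-\eps^2\mu/2}$. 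By the inequality above (take $t>0$ for the upper tail, $t<0$ for the lower tail), both estimates hold verbatim with $X$ in place of $Y$, and a union bound over the two tails finishes the proof. The main obstacle is thus purely the comparison between $X$ and the binomial: once the moment generating function bound is secured --- either by citing negative association or by reproducing Hoeffding's short convex-ordering argument --- everything else is the routine exponential-moment computation.
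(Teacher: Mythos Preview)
The paper does not prove this statement at all; it is stated as a well-known result and attributed to~\cite[Theorem~2.10]{JaLuRu00} (Janson--\L{}uczak--Ruci\'nski), so there is nothing to compare your argument against. Your approach---dominating the hypergeometric moment generating function by that of the binomial with the same mean (via negative association or, equivalently, Hoeffding's convex-ordering inequality for sampling without replacement) and then running the standard Chernoff optimisation---is exactly the proof given in that reference and is correct.

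Two minor remarks. You are right that the displayed inequality is missing an $\EXP[X]$ on the left-hand side; the relative form $\Pr\big[|X-\EXP[X]|>\eps\,\EXP[X]\big]$ is what is meant and what is used in the proof of \lemref{lem:partition}. Your comment about the factor~$2$ from the union bound is also accurate: in the single application in the paper only the lower tail is needed, so the issue does not arise.
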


\begin{proof}[Proof of \lemref{lem:partition}]
Choose~$n_0$ such that $\log n\le n^{2/15}$. Let~$V=V(G)$ and~$m:=m(n,d)$. We show the existence of a partition~$U_1,\dots,U_k$ which respects (\ref{eq:partition}) by a simple probabilistic argument. 

Choose a partition~$U_1,\dots,U_k$ of~$V$ into disjoint sets of respective sizes $n_1,\dots,n_k$ uniformly at random. We show that with positive probability, (\ref{eq:partition}) holds for all sets $X\subseteq V$ with $1\le|X|< m$ and all $i\in\{1,\dots,k\}$.

Let~$X\subseteq V$ with $1\le |X|< m$ and let~$i\in\{1,\dots,k\}$. Then the random variable~$|N_G(X)\cap U_i|$ is distributed according to the hypergeometric distribution~$\Hyp(n,n_i,|N_G(X)|)$ with
\[
\EXP\big[|N_G(X)\cap U_i|\big]=\frac{n_i}{n}\,|N_G(X)|\ge \frac{n_i}{n}\,d\,|X|=5d_i|X|.
\]
We apply \thmref{thm:hyp} with~$\eps=4/5$ and obtain 
\[
\Pr\big[|N_G(X)\cap U_i|\le d_i|X|\big]\le \euler^{-\frac{16}{15}d_i|X|}\le e^{-\frac{32}{15}|X|\log n}=n^{-\frac{32}{15}|X|}.
\]

Let~$q$ be the probability that there exists a set $X\subseteq V$ with $1\le|X|< m$ and an~$i\in\{1,\dots,k\}$ which violates property~(\ref{eq:partition}). Then, by the union bound,
\[
q\le\sum_{i=1}^k\sum_{j=1}^m\binom{n}{j} n^{-\frac{32}{15}{j} }<\sum_{i=1}^k\sum_{j=1}^n n^j n^{-\frac{32}{15}{j} }\le k n^{-\frac{2}{15}
}\le 1
\]
for sufficiently large~$n$.

We have shown that with positive probability the randomly chosen partition~$U_1,\dots,U_k$ satisfies property~(\ref{eq:partition}); therefore, such a partition exists and the first statement of \lemref{lem:partition} holds.

Finally, let~$U_1,\dots,U_k$ be such a partition that satisfies property~(\ref{eq:partition}). Let~$i\in\{1,\dots,k\}$ and consider a set~$W\subset V$ with~$U_i\subset W$ and the induced graph~$H=G[W]$. Then, by the choice of~$d_i$, we have $m(|W|,d_i)\ge m(n_i,d_i)\ge m(n,d)$. Thus, condition~(\eb) in \defref{def:expander} with $m=m(|W|,d_i)$ holds for~$H$ since~$G$ is an $(n,d)$-expander. By~(\ref{eq:partition}), $|N_H(X)|\ge d_i|X|$ holds for all~$X\subseteq V(H)$ with~$1\le |X|< m(n,d)$. Thus, similar to the proof of \lemref{lem:monotone}, it is sufficient to verify that $|N_H(X)|\ge d_i|X|$ holds also for all $X\subseteq V(H)$ with $m(n,d)\le |X|< m(|W|,d_i)$. Since~$G$ is an $(n,d)$-expander, we have for such a set~$X$ that
\[
|N_H(X)|\ge |W|-|X|-m(n,d)\ge 2 d_i(m(n_i,d_i)-1)-2m(n_i,d_i)\ge d_i m(n_i,d_i)\ge d_i|X|
\]
and the second statement of \lemref{lem:partition} holds.
\end{proof}

\subsection{Almost Spanning Trees, Hamilton Paths, and Star Matchings}
We now summarize three known results on embedding almost spanning trees, Hamilton paths, and star matchings in graphs with large expansion. These results are crucial for the proof of \thmref{thm:universality} (Tree-Universality).

In~\cite{Ha01}, Haxell extended a result of Friedman and Pippenger~\cite{FrPi87} and showed that one can embed every almost spanning tree with bounded maximum degree in a graph with sufficiently large expansion. Here, we present a formulation of this result in the flavor of Theorem~3 in~\cite{BaCsPeSa10}.
\begin{theorem}
\label{thm:almostspanning}
Let~$d,m,k\in\mathbb{N}$ and let~$H$ be a non-empty graph satisfying the following two conditions:
\begin{enumerate}[(i)]
\item $|N_H(X)|\ge d|X|+1$ for all $X\subseteq V(H)$ with $1\le|X|\le m$,
\item $|N_H(X)|\ge d|X|+k$ for all $X\subseteq V(H)$ with $m<|X|\le 2m$.
\end{enumerate}
Then~$H$ contains a copy of every tree~$T$ with $|V(T)|\le k+1$ and maximum degree at most~$d$.
\end{theorem}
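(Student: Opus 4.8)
The plan is to build an embedding of a given $T$ with $|V(T)|=k+1$ and $\Delta(T)\le d$ vertex by vertex, always maintaining a Friedman--Pippenger / Haxell type expansion invariant which guarantees that the partial embedding can be grown. Root $T$ at an arbitrary vertex and order its vertices $v_1,\dots,v_{k+1}$ so that each $v_j$ with $j\ge 2$ has exactly one neighbour, its parent $p(v_j)$, among $v_1,\dots,v_{j-1}$; then every $T_j:=T[\{v_1,\dots,v_j\}]$ is a subtree. We construct a chain of embeddings $\phi_1\subseteq\dots\subseteq\phi_{k+1}$, with $\phi_j$ embedding $T_j$ into $H$; the map $\phi_{k+1}$ is then the desired copy of $T$.

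The crux is to isolate the right invariant. Along with $\phi_j$ we keep track of which embedded tree-vertices are still \emph{active} (have children not yet embedded) and, for bookkeeping, we may reserve for each active vertex a private packet of host-neighbours earmarked for its missing children. We call the configuration \emph{good} if a suitable expansion statement survives: for every $X\subseteq V(H)$ with $1\le|X|\le 2m$, the number of neighbours of $X$ that are not yet committed is at least $d|X|$, plus a surplus --- the surplus being $1$ in the small regime $|X|\le m$ and roughly $k+1-j$ (the number of vertices of $T$ still to be placed) in the larger regime $m<|X|\le 2m$, up to a correction that does not charge committed vertices sitting inside $X$. The role of the two regimes is exactly as dictated by hypotheses (i)--(ii): the surplus $1$ for small sets is what always produces a fresh host-neighbour to extend into, while the surplus $\sim k+1-j$ for larger sets is what makes goodness self-reproducing, since each subsequent step erodes it by at most one unit. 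The base case --- place $v_1$ anywhere (and reserve up to $\deg_T(v_1)\le d$ of its neighbours) --- is good precisely because of conditions (i) and (ii) with the full value $k$.

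The inductive step is the heart of the proof. Given a good configuration for $T_j$, set $v:=v_{j+1}$, $s:=p(v)$, $a:=\phi_j(s)$, and seek $\phi_{j+1}(v)=w$ with $w$ an unused neighbour of $a$; a $w$ reserved for $v$ is at hand, or else the small-set surplus applied to $X=\{a\}$ supplies one. One then has to re-establish the reservation system: produce, disjointly and avoiding all committed vertices, the neighbours of $w$ needed for $v$'s future children, while possibly re-routing reservations that were disturbed because $w$ (and its neighbourhood) changed status. This is achieved by a single application of the defect form of Hall's theorem to the bipartite ``demand/supply'' graph whose left side is the list of outstanding children of embedded tree-vertices, whose right side is the set of still-free host vertices, and whose adjacency comes from $H$. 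Checking Hall's condition reduces, upon examining a putative obstruction, to an expansion inequality $|N_H(X)\setminus(\text{committed})|\ge d|X|+(\text{small surplus})$ for some set $X$; the need to allow $|X|$ up to $2m$ rather than $m$ is exactly the point where the second regime of the hypothesis is used, since a minimal obstruction may have to be enlarged by its own neighbourhood or by one packet of reservations, each of size at most $m$. A direct computation, using the surplus $\sim k+1-j$ of the old invariant and the fact that we have added one vertex, then shows the new configuration is good.

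I expect the main obstacle to be precisely this maintenance step: setting up the auxiliary bipartite graph and the ``committed/reserved/free'' bookkeeping so that Hall's condition transcribes cleanly into the two-regime expansion hypothesis, and verifying throughout that every set to which expansion is applied has size at most $2m$. Granting this, iterating the step to $j=k+1$ and discarding the reservations leaves an embedding of $T$ in $H$; since the hypotheses force $|V(H)|\ge d+1\ge 2$, no separate base-case argument for the existence of $\phi_1$ is needed, and the theorem follows.
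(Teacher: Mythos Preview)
The paper does not prove this theorem; it is quoted as a known result, attributed to Haxell (extending Friedman--Pippenger), in the formulation of Balogh--Csaba--Pei--Samotij. So there is no ``paper's own proof'' to compare against here.

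That said, your proposal is exactly the Friedman--Pippenger/Haxell scheme that underlies the cited result: grow the tree one vertex at a time from a root ordering, maintain for each partially embedded configuration a two-regime expansion invariant whose large-set surplus decreases by one with each embedded vertex, and restore the reservation system after each step via Hall's theorem on an auxiliary bipartite demand/supply graph. Your identification of the roles of the two hypotheses is correct --- the $+1$ for small sets produces the next host vertex, while the $+k$ for sets of size up to $2m$ absorbs the cumulative erosion over $k$ steps --- and your remark that minimal Hall obstructions may force one to test sets of size up to $2m$ rather than $m$ is precisely why the second regime is stated that way. The sketch is sound as an outline; the part you flag as the main obstacle (the bookkeeping that makes Hall's condition reduce cleanly to the invariant, with all test sets of size $\le 2m$) is indeed where the work lies in Haxell's argument, and carrying it out in full requires some care with the definition of ``committed'' vertices, but there is no missing idea.
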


In terms of $(n,d)$-expanders, we may reformulate the previous theorem as follows.
\begin{corollary}[Almost Spanning Tree Embedding]
\label{cor:almostspanning}
Let~$n,\Delta\in\mathbb{N}$ and let~$d\in\mathbb{R}^+$ with~$d\ge 2\Delta$. Then every $(n,d)$-expander is $\mathcal{T}(n-4\Delta m(n,d),\Delta)$-universal.
\end{corollary}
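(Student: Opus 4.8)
The plan is to obtain the corollary as a direct application of Haxell's embedding theorem (\thmref{thm:almostspanning}) to the expander $G$ itself, with the parameters of that theorem chosen to be $d_{\mathrm{thm}}=\Delta$, $m_{\mathrm{thm}}=m:=m(n,d)$, and $k_{\mathrm{thm}}=n-4\Delta m-1$. First I would clear away the degenerate cases. If $n-4\Delta m\le 1$, then $\mathcal{T}(n-4\Delta m,\Delta)$ is either empty or consists only of the single vertex, so every $(n,d)$-expander is trivially universal for it; and if $\Delta=1$, then $\mathcal{T}(n-4\Delta m,1)$ contains at most the trees $K_1$ and $K_2$, both of which $G$ contains, since $G$ has an edge (by~(\ea) applied to a single vertex when $m\ge2$, or because~(\eb) forces $G$ to be complete when $m=1$). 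Hence from now on I may assume $\Delta\ge2$ and $n-4\Delta m\ge2$, so that in particular $k_{\mathrm{thm}}\ge1$ and all of $d_{\mathrm{thm}},m_{\mathrm{thm}},k_{\mathrm{thm}}$ are positive integers.

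The only extra ingredient needed is an ``inverse expansion'' estimate: for every $X\subseteq V(G)$ with $|X|\ge m$ one has $|N_G(X)|\ge n-|X|-m+1$. This is proved from~(\eb) exactly as in \lemref{lem:exceptionalset}: writing $Z:=V(G)\setminus(X\cup N_G(X))$, if $|Z|\ge m$ then choosing an $m$-subset of $X$ and an $m$-subset of $Z$ produces two disjoint $m$-sets with no edge between them, contradicting~(\eb); hence $|Z|\le m-1$, and the claim follows from $n=|X|+|N_G(X)|+|Z|$.

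With this in hand I would verify the two hypotheses of \thmref{thm:almostspanning}. For hypothesis~(i), let $1\le|X|\le m$. If $|X|\le m-1$, then~(\ea) together with $d\ge2\Delta$ gives $|N_G(X)|\ge d|X|\ge2\Delta|X|\ge\Delta|X|+1$; if $|X|=m$, the inverse expansion estimate gives $|N_G(X)|\ge n-2m+1$, which is at least $\Delta m+1=\Delta|X|+1$ because $n\ge4\Delta m+2\ge(\Delta+2)m$. For hypothesis~(ii), let $m<|X|\le2m$; then $|X|\ge m$, so the inverse expansion estimate applies and gives $|N_G(X)|\ge n-|X|-m+1$. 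Substituting $k_{\mathrm{thm}}=n-4\Delta m-1$, the desired bound $|N_G(X)|\ge\Delta|X|+k_{\mathrm{thm}}$ is equivalent to $(\Delta+1)|X|\le4\Delta m-m+2$, and since $|X|\le2m$ it suffices that $2(\Delta+1)m\le4\Delta m-m+2$, i.e.\ that $m(2\Delta-3)\ge-2$, which holds for every $\Delta\ge2$. Now \thmref{thm:almostspanning} shows that $G$ contains a copy of every tree with at most $k_{\mathrm{thm}}+1=n-4\Delta m$ vertices and maximum degree at most $\Delta$; in particular $G$ is $\mathcal{T}(n-4\Delta m,\Delta)$-universal, as required.

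Since each step is a short computation, I do not expect a real obstacle; the points that need genuine care are the bookkeeping of the boundary cases and the observation that hypothesis~(ii) of \thmref{thm:almostspanning} concerns sets of size $\Theta(m)$, on which~(\ea) says nothing useful, so it must be verified through the ``inverse expansion'' consequence of~(\eb). This is also exactly where the loss of $4\Delta m$ vertices comes from: condition~(ii) must hold for sets as large as $2m$, which (via the inverse expansion bound) forces $k_{\mathrm{thm}}\le n-2(\Delta+1)m+O(m)$, costing roughly $2\Delta m$; checking~(i) at $|X|=m$ and the slack in the inverse-expansion bound absorb the rest.
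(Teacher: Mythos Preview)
Your proof is correct and follows essentially the same route as the paper: verify the two hypotheses of \thmref{thm:almostspanning} with parameters $d_{\mathrm{thm}}=\Delta$, $m_{\mathrm{thm}}=m(n,d)$, $k_{\mathrm{thm}}=n-4\Delta m-1$, using~(\ea) for small sets and the inverse-expansion consequence of~(\eb) for sets of size $\Theta(m)$. Your version is in fact slightly more careful than the paper's---you treat the boundary case $|X|=m$ in hypothesis~(i) separately via inverse expansion, whereas the paper invokes~(\ea) there even though~(\ea) is only stated for $|X|<m$---and you also handle the degenerate cases $\Delta=1$ and $n-4\Delta m\le 1$ explicitly, which the paper omits.
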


\begin{proof}
Without loss of generality we may suppose that~$\Delta\ge 2$. Let~$m:=m(n,d)$ and let $k:=|V(T)|$. Furthermore, let~$H$ be an $(n,d)$-expander with $n=k+4\Delta m$ and let~$T\in\mathcal{T}(k,\Delta)$.

Then, for all~$X\subseteq V(H)$ with $1\le |X|\le m$, we have  by (\ea) that
\[
|N_H(X)|\ge 2\Delta|X|\ge \Delta |X|+1.
\]
For all~$X\subseteq V(H)$ with $m\le |X|\le 2m$, we have
\[
|N_H(X)|\ge n-|X|-m\ge k+4\Delta m-3m\ge k+2\Delta m\ge\Delta|X|+k.
\]
The corollary then follows from \thmref{thm:almostspanning}.
\end{proof}

Next, we state a result of Hefetz, Krivelevich, and Szab{\'o}~\cite{HeKrSz09} on the Hamilton-connectedness of expanders with edge-connectivity between large sets. For this, let us briefly revisit the notion of \emph{Hamilton-connectedness}.  An \emph{$x$-$y$-Hamilton path} in a graph is a path with end-vertices~$x$ and~$y$ that visits each vertex of the graph exactly once. A graph is \emph{Hamilton-connected} if there exists an $x$-$y$-Hamilton path for every pair of vertices~$x$ and~$y$ in the graph. The following theorem is a simplified version of the results in~\cite{HeKrSz09}.
\begin{theorem}
\label{thm:hamilton}
Let~$n,d\in\mathbb{N}$ satisfy that~$n$ is sufficiently large and~$12\le d\le\sqrt{n}$. Let~$H$ be a graph on~$n$ vertices satisfying the following two conditions:
\begin{enumerate}[(i)]
\item  $|N_H(X)|\ge d|X|$ for all $X\subseteq V(H)$ with $0<|X|\le\frac{n\log d}{d\log n}$,
\item  $e_H(X,Y)>0$ for all disjoint $X,Y\subseteq V(H)$ with $|X|=|Y|\ge\frac{n\log d}{1035\log n}$.
\end{enumerate}
Then~$H$ is Hamilton-connected.
\end{theorem}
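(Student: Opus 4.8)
The plan is to deduce \thmref{thm:hamilton} from the main results of Hefetz, Krivelevich, and Szab\'o~\cite{HeKrSz09}, of which it is a simplified version with an explicit choice of the thresholds; the only genuine work is to verify that conditions~(i) and~(ii) imply the hypotheses used there --- in particular that $H$ is connected --- and that the constants line up. To see that $H$ is connected, I would argue as follows. By~(i), no nonempty $X\subseteq V(H)$ with $|X|\le\frac{n\log d}{d\log n}$ can be a union of connected components, since then $N_H(X)$ would be nonempty and yet contained in~$X$. Applying~(i) once more, to a subset of size $\big\lceil\frac{n\log d}{d\log n}\big\rceil$ of a hypothetical component, shows that every union of components other than $V(H)$ has more than $\frac{n\log d}{\log n}>\frac{n\log d}{1035\log n}$ vertices; hence by~(ii) there cannot be two disjoint such sets, and $H$ is connected. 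The remaining hypotheses of~\cite{HeKrSz09} are then immediate from~(i) and~(ii).

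If a self-contained argument is preferred, the method to use is the P\'osa rotation--extension technique. Take a longest path~$P$ in~$H$, with vertex set~$U$, and fix one of its endpoints~$a$. Rotating~$P$ while keeping~$a$ fixed produces the set~$R$ of endpoints of longest paths on~$U$ reachable from~$P$; since~$P$ is longest, no such path can be extended, so $N_H(R)\subseteq U$ and P\'osa's lemma gives $|N_H(R)|\le 2|R|$. Plugging this into~(i) --- applied first to~$R$ itself and then to a subset of~$R$ of size $\big\lceil\frac{n\log d}{d\log n}\big\rceil$ --- forces $|R|\ge\frac{n\log d}{3\log n}$. Repeating the rotation from the other endpoint yields, for every $w\in R$, an equally large set $S(w)$ of vertices~$w'$ admitting a longest path on~$U$ from~$w$ to~$w'$. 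If $U\neq V(H)$, one now plays these rotation sets off against~(ii): an edge guaranteed by~(ii) between two appropriately chosen rotation sets creates a cycle on~$U$, which, as~$H$ is connected and $U\subsetneq V(H)$, opens into a path longer than~$P$ --- a contradiction. Thus~$P$ is a Hamilton path, and a final round of rotations, again combined with~(ii), moves its two endpoints onto any prescribed pair~$x$,~$y$.

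The step I expect to be the main obstacle is the bookkeeping that makes the appeal to~(ii) legitimate: $S(w)$ depends on~$w$, so one does not immediately have two \emph{fixed} disjoint large sets between which an edge is forbidden. The standard remedy is the double-rotation argument of Bollob\'as and of Koml\'os--Szemer\'edi, which extracts from the nested rotation sets a pair of disjoint sets, each of size at least $\frac{n\log d}{1035\log n}$, such that any edge between them is a \emph{booster} --- an edge whose addition either lengthens the current longest path or, in the spanning case, closes a Hamilton cycle or produces the desired $x$-$y$ Hamilton path --- whereupon~(ii) supplies one. Tracking the loss incurred through these (two) rounds of rotation is exactly what fixes the constant~$1035$; since this computation is already done in~\cite{HeKrSz09}, I would in the end simply invoke their theorem.
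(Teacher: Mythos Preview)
Your proposal is correct and matches the paper's treatment: the paper does not prove \thmref{thm:hamilton} at all but simply states it as ``a simplified version of the results in~\cite{HeKrSz09}'', so your plan to invoke that reference is exactly what is done. Your additional self-contained sketch via P\'osa rotation--extension with the double-rotation bookkeeping is indeed the method underlying~\cite{HeKrSz09}, and is more than the paper itself provides.
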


As before, we give a reformulation of this result in terms of $(n,d)$-expanders.
\begin{corollary}[Hamilton Connectivity]
\label{cor:hamilton}
There exists an absolute constant~$n_0\in\mathbb{N}$ such that the following statement holds. Let~$n\in\mathbb{N}$ with~$n\ge n_0$ and let~$d\in\mathbb{R}^+$ with $d\ge\log n$. Then every $(n,d)$-expander is Hamilton-connected.
\end{corollary}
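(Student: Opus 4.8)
The plan is to derive the corollary from Theorem~\ref{thm:hamilton} by checking that every $(n,d)$-expander with $d \ge \log n$ satisfies the two hypotheses of that theorem, after possibly truncating the value of $d$. First I would invoke the monotonicity of the expansion properties: by \lemref{lem:monotone}, if $d \ge \log n$ is larger than $\sqrt n$, then an $(n,d)$-expander is also an $(n,d')$-expander for any $d'$ with $3 \le d' \le d$, so it suffices to treat $d' := \min\{d, \lfloor\sqrt n\rfloor\}$, which lies in the range $\log n \le d' \le \sqrt n$ required by Theorem~\ref{thm:hamilton} once $n$ is large enough that $\log n \ge 12$. For notational ease I would simply rename $d'$ back to $d$ and assume $\log n \le d \le \sqrt n$.

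Next I would verify hypothesis~(i) of Theorem~\ref{thm:hamilton}. For $X \subseteq V(H)$ with $0 < |X| \le \frac{n\log d}{d\log n}$, I need $|N_H(X)| \ge d|X|$. Since $d \ge \log n$, one has $\frac{\log d}{\log n} \le 1$, hence $\frac{n\log d}{d\log n} \le \frac{n}{d}$; and because $d \ge 2$ (true once $\log n \ge 2$), this is at most $\frac n2 \le \frac{n}{2d}\cdot d$. More carefully, I want this upper bound on $|X|$ to be strictly smaller than $m(n,d) = \lceil n/(2d)\rceil$, so that (\ea) applies directly and gives $|N_H(X)| \ge d|X|$. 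Indeed $\frac{n\log d}{d\log n} \le \frac{n}{d} \le 2\cdot\frac{n}{2d} \le 2 m(n,d)$ is not quite tight enough, so I would instead note $\frac{n\log d}{d\log n} \le \frac{n}{d\log n}\cdot\log d$ and use $\log d \le \frac12\log n$ when $d \le \sqrt n$, giving $\frac{n\log d}{d\log n} \le \frac{n}{2d} \le m(n,d)$; combined with the fact that sets of size exactly $m(n,d)$ can be handled by the calculation in \lemref{lem:monotone} (expansion to $n - |X| - m \ge d|X|$ using (\eb)), hypothesis~(i) follows.

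For hypothesis~(ii), I need $e_H(X,Y) > 0$ for all disjoint $X, Y$ with $|X| = |Y| \ge \frac{n\log d}{1035\log n}$. Since $d \ge \log n$ we have $\frac{n\log d}{1035\log n} \ge \frac{n\log\log n}{1035\log n}$, which for $n$ large is far bigger than $m(n,d) = \lceil n/(2d)\rceil \le \lceil \frac{n}{2\log n}\rceil$ — wait, that comparison goes the wrong way when $d$ is close to $\log n$, so let me reconsider: actually $\frac{n\log d}{1035\log n} \ge m(n,d)$ precisely when $\frac{\log d}{1035\log n} \ge \frac{1}{2d}$, i.e. $2d\log d \ge 1035\log n$, which holds once $d \ge \log n$ and $n$ is large since then $2d\log d \ge 2\log n\log\log n \gg 1035\log n$ fails only for bounded $n$; to be safe I would instead only claim $|X| = |Y| \ge \frac{n\log d}{1035\log n} \ge m(n,d)$ for $n \ge n_0$ and appeal to (\eb), which guarantees $e_H(X,Y) > 0$ for disjoint sets of size $m(n,d)$, hence (being larger) for $X, Y$ here. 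Then Theorem~\ref{thm:hamilton} applies and $H$ is Hamilton-connected.

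The main obstacle is purely bookkeeping: ensuring the threshold $\frac{n\log d}{d\log n}$ in hypothesis~(i) does not exceed $m(n,d)$ (so that (\ea) is directly usable) and simultaneously that $\frac{n\log d}{1035\log n}$ in hypothesis~(ii) is at least $m(n,d)$ (so (\eb) is usable), across the whole range $\log n \le d \le \sqrt n$; both reduce to elementary inequalities that hold for all sufficiently large $n$, which is exactly why the statement is phrased with an absolute constant $n_0$. I expect no genuine difficulty beyond choosing $n_0$ large enough to absorb these estimates and the condition $d \ge 12$ needed by Theorem~\ref{thm:hamilton}.
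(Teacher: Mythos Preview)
Your proposal is correct, but the paper takes a slicker route that sidesteps the bookkeeping you wrestle with. Rather than applying \thmref{thm:hamilton} with the variable parameter $d' = \min\{d,\lfloor\sqrt n\rfloor\}$, the paper fixes the theorem's parameter to the absolute constant $d_{\text{Thm~\ref*{thm:hamilton}}} := \euler^{1035}$. Then condition~(ii) becomes the requirement $e_H(X,Y)>0$ for $|X|=|Y|\ge \frac{n\log(\euler^{1035})}{1035\log n} = \frac{n}{\log n}$, which follows immediately from (\eb) since $m(n,d) \le n/d \le n/\log n$; no delicate comparison of $2d\log d$ against $1035\log n$ is needed. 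Condition~(i) follows from \lemref{lem:monotone}: once $n$ is large enough that $\log n \ge \euler^{1035}$, the graph is an $(n,\euler^{1035})$-expander, and the threshold $\frac{n\log(\euler^{1035})}{\euler^{1035}\log n} = \frac{1035\,n}{\euler^{1035}\log n}$ is easily below $m(n,\euler^{1035})$.

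What each approach buys: your route keeps the theorem's expansion parameter tied to the actual expansion $d$ of the graph, which is perhaps more natural but forces you to verify the two thresholds $\frac{n\log d}{d\log n}$ and $\frac{n\log d}{1035\log n}$ sit on the correct sides of $m(n,d)$ uniformly over the range $\log n \le d \le \sqrt n$; this is where your argument becomes fiddly (and where your own exposition visibly hesitates). The paper's trick of plugging in a fixed constant decouples the two hypotheses and reduces each to a one-line check. Both arguments ultimately need an astronomically large $n_0$, so neither is ``better'' in that respect.
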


\begin{proof}
Let~$G$ be an $(n,d)$-expander. For $d_{\text{Thm~\ref*{thm:hamilton}}}=\euler^{1035}$, consider the two conditions of \thmref{thm:hamilton}. Condition~(i) holds by \lemref{lem:monotone} for sufficiently large~$n$. Condition~(ii) holds since
\[
m\le\frac{n}{d}\le\frac{n}{\log n}=\frac{n\log d_{\text{Thm~\ref*{thm:hamilton}}}}{1035\log n}.
\]
Thus, $G$ is Hamilton-connected by \thmref{thm:hamilton}.
\end{proof}

Finally, we state a version of Hall's marriage theorem for expanders which shows that we can embed a star matching in a bipartite graph with large expansion in one direction and large minimum degree in the other direction.
\begin{lemma}[Star Matching]
\label{lem:starmatching}
Let~$d,m\in\mathbb{N}$  and let~$G$ be a graph. Suppose that two disjoint sets~$U,W\subseteq V(G)$ satisfy the following three conditions:
\begin{enumerate}[(i)]
\item $|N_G(X)\cap W|\ge d|X|$ for all $X\subseteq U$ with $1\le |X|\le m$,
\item $e_G(X,Y)>0$  for all $X\subseteq U$ and~$Y\subseteq W$ with $|X|=|Y|\ge m$,
\item $|N_G(w)\cap U|\ge m$ for all $w\in W$.
\end{enumerate}
Then, for every map~$k\colon U\to\{0,\dots,d\}$ that satisfies~$\sum_{u\in U} k(u)=|W|$, the set~$W$ can be partitioned into~$|U|$ disjoint subsets~$\{W_u\}_{u\in U}$ satisfying~$|W_u|=k(u)$ and~$W_u\subseteq N_G(u)\cap W$. We call the set of edges between the vertices of~$U$ and their respective parts in~$W$ a \emph{star matching}.
\end{lemma}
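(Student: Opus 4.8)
The plan is to reduce the statement to the defect version of Hall's marriage theorem applied to an auxiliary bipartite graph. First I would build a bipartite multigraph $B$ on the parts $U^*$ and $W$, where $U^*$ is obtained from $U$ by replacing each vertex $u\in U$ with $k(u)$ clones $u^{(1)},\dots,u^{(k(u))}$, each clone inheriting the full neighborhood $N_G(u)\cap W$; since $\sum_{u\in U}k(u)=|W|$, the two sides of $B$ have equal size, so a perfect matching in $B$ is exactly the desired partition $\{W_u\}_{u\in U}$ (collect, for each $u$, the vertices of $W$ matched to its clones). Thus it suffices to verify Hall's condition for $B$, i.e. that $|N_B(S)|\ge |S|$ for every $S\subseteq U^*$. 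Equivalently, writing $S$ as the clones of a subset $X\subseteq U$ with multiplicities at most $k(u)\le d$, it suffices to show $|N_G(X)\cap W|\ge \sum_{u\in X}k(u)$ for every nonempty $X\subseteq U$, since the left side is exactly $|N_B(S)|$ whenever $S$ consists of all clones over the vertex set $X$ (and taking all clones only makes $S$ larger, so this is the worst case).

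So the core of the argument is the inequality $|N_G(X)\cap W|\ge \sum_{u\in X}k(u)$ for all nonempty $X\subseteq U$, and here I would split into two regimes according to the size of $X$. If $1\le |X|\le m$, then condition~(i) gives $|N_G(X)\cap W|\ge d|X|\ge \sum_{u\in X}k(u)$, using $k(u)\le d$ for every $u$. If $|X|>m$, I would argue instead that $N_G(X)\cap W$ misses at most $m-1$ vertices of $W$: if $W\setminus N_G(X)$ had size at least $m$, pick any $m$-subset $Y\subseteq W\setminus N_G(X)$ and any $m$-subset $X'\subseteq X$; then $e_G(X',Y)=0$, contradicting condition~(ii). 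Hence $|N_G(X)\cap W|\ge |W|-(m-1)$. On the other hand, since the clone counts are at most $d$ and total $|W|$, and we may bound $\sum_{u\in X}k(u)\le |W|$ trivially but need a bit of slack — here is where condition~(iii) enters. Condition~(iii) says every $w\in W$ has at least $m$ neighbors in $U$; summing, $e_G(U,W)=\sum_{w\in W}|N_G(w)\cap U|\ge m|W|$, which I would not actually need directly. The clean route for $|X|>m$: it suffices to show $|W\setminus N_G(X)|\le |W|-\sum_{u\in X}k(u)$, i.e. $\sum_{u\in X}k(u)\le |N_G(X)\cap W|$, and since $\sum_{u\in U}k(u)=|W|$ this is $\sum_{u\notin X}k(u)\ge |W\setminus N_G(X)|$. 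Every vertex $w\in W\setminus N_G(X)$ has all its (at least $m$) neighbors in $U\setminus X$, so $U\setminus X\ne\emptyset$; but to get the counting inequality I would observe that the vertices of $W\setminus N_G(X)$ together with the map $k$ restricted to $U\setminus X$ must be reconciled — the honest fix is to note $|W\setminus N_G(X)|\le m-1< m\le$ (number of neighbors each such $w$ has in $U\setminus X$), and then apply condition~(i) to $U\setminus X$ if it is small, or condition~(ii) again; in fact once $|W\setminus N_G(X)|\le m-1$, we are done provided $\sum_{u\in X} k(u)\le |W|-(m-1)$, equivalently $\sum_{u\notin X}k(u)\ge m-1$, and this follows because the $m-1$ (or fewer) non-neighbors of $X$ still need to be covered — more precisely one applies condition~(iii) to see $W\setminus N_G(X)$ sends all edges into $U\setminus X$, so by the already-established Hall condition for small sets applied within $U\setminus X$ we get $|N_G(U\setminus X)\cap W|$ large, but the slick bound is simply $\sum_{u\notin X}k(u)\ge |N_G(\{w\})\cap(U\setminus X)|\ge m> m-1$ picking any single $w\in W\setminus N_G(X)$ (and if $W\setminus N_G(X)=\emptyset$ the inequality is immediate).

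I expect the bookkeeping in the large-$X$ case to be the main obstacle: one must carefully track how condition~(iii) guarantees that whatever vertices of $W$ fail to be dominated by $X$ are compensated for by clone-capacity on $U\setminus X$, and present this as a clean single inequality rather than the case-tangle above. Once the two regimes are handled, Hall's theorem delivers the perfect matching in $B$, and unpacking the clones yields the partition $\{W_u\}_{u\in U}$ with $|W_u|=k(u)$ and $W_u\subseteq N_G(u)\cap W$, which is precisely the claimed star matching.
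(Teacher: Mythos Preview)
Your reduction to the generalized Hall condition $|N_G(X)\cap W|\ge\sum_{x\in X}k(x)$ via cloning is exactly the paper's approach, and your small-$X$ case via condition~(i) matches theirs. The gap is your large-$X$ argument: the ``slick bound'' $\sum_{u\notin X}k(u)\ge |N_G(\{w\})\cap(U\setminus X)|$ is simply false, since the values $k(u)$ on $U\setminus X$ are unrelated to the degree of any particular $w$. In fact, because the lemma as stated allows $k(u)=0$, no argument can work here: take $m=2$, $d=3$, $|U|=6$, $|W|=12$, with eleven vertices of $W$ adjacent to all of $U$ and one exceptional $w^*$ adjacent only to two vertices $u_1,u_2\in U$; set $k(u_1)=k(u_2)=0$ and $k\equiv 3$ on the remaining four vertices. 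All three hypotheses hold, yet $w^*$ cannot be placed.

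The paper's proof splits into \emph{three} ranges. For $|X|>|U|-m$ it uses condition~(iii) directly: every $w\in W$ has at least $m>|U\setminus X|$ neighbours in $U$, hence at least one in $X$, so $N_G(X)\cap W=W$ and the Hall inequality is trivial. For the middle range $m\le|X|\le|U|-m$ the paper writes ``$k(u)\ge 1$ for all $u\in U\setminus X$'' --- this is not actually a stated hypothesis, but it does hold in every application of the lemma in the paper (the vertices of $U$ are always parents of at least one leaf), and with it one gets $\sum_{u\notin X}k(u)\ge|U\setminus X|\ge m>|W\setminus N_G(X)|$, which closes the case via~(ii) exactly as you set up. So you should separate out the regime $|X|>|U|-m$ where condition~(iii) does the real work, and for the middle regime either add the hypothesis $k(u)\ge 1$ or note that the lemma needs this correction.
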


\begin{proof}
To prove this lemma, we show for all~$X\subseteq U$ the generalized Hall's condition,
\begin{equation}
\label{eq:hall}
|N_G(X)\cap W|\ge\sum_{x\in X}k(x).
\end{equation}
We distinguish three cases: 

First, if $|X|< m$, then~$k(x)\le d$ for all~$x\in X$ and~(i) implies~(\ref{eq:hall}).

Second, if~$m\le |X|\le |U|-m$, then~$k(u)\ge 1$ for all~$u\in U\setminus X$ and~(ii) implies~(\ref{eq:hall}). 

Third, if~$|U|-m<|X|$, then~(iii) directly implies~(\ref{eq:hall}).

\noindent Thus,~(\ref{eq:hall}) holds for all~$X\subseteq U$ and the lemma is a direct consequence of the Max-Flow Min-Cut Theorem~\cite{ElFeSh56,FoFu56}.
\end{proof}

\section{Tree-Universality of (n,d)-Expanders}
\label{sec:universality}
This section is devoted to the proof of our main result, \thmref{thm:universality} (Tree-Universality), which we presented in the introduction. The proof is based on a case distinction on whether the embedded tree contains a long bare path or many leaves. This extends the ideas in~\cite{Kr10}.

\begin{definition}[Leaves, Bare Paths, and Levels]
Let~$T$ be a tree. A \emph{leaf} of~$T$ is a vertex of degree one in~$T$. A \emph{bare path} is a path in~$T$ whose vertices have all degree two in~$T$. If we remove all leaves from~$T$, we call the leaves and bare paths in the remaining tree \emph{second level leaves} and \emph{second level bare paths}, respectively.  For distinction, we call the leaves and bare paths of the original tree~$T$ also \emph{first level leaves} and \emph{first level bare paths}. 
\end{definition}

The following observation was already made in~\cite{Kr10} and states that a tree with bounded maximum degree contains a long bare path or many leaves.

\begin{lemma}
\label{lem:pathorleaves}
Let~$T$ be a tree, let $P$ be a bare path of maximum length in~$T$, and let~$L$ be the set of leaves in~$T$. Then
\[
2\big(|V(P)|+1\big)\big(|L|-1\big)\ge|V(T)|.
\]
\end{lemma}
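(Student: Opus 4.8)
The plan is to bound the number of vertices of $T$ by partitioning $V(T)$ into three groups — the leaves $L$, the ``branch vertices'' (vertices of degree at least three), and the remaining degree-two vertices — and to control each group in terms of $|L|$ and $|V(P)|$. First I would record the standard fact that in any tree, the number of branch vertices is at most $|L|-1$; this follows by summing degrees, since $\sum_v (\deg_T(v)-2) = -2$ and each leaf contributes $-1$ while each branch vertex contributes at least $+1$. So if $B$ denotes the set of vertices of degree $\ge 3$, then $|B| \le |L| - 1$.

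Next I would account for the degree-two vertices. Deleting the leaves and the branch vertices from $T$ leaves a collection of paths, each of whose vertices has degree two in $T$ — in other words, each is a bare path of $T$, hence has at most $|V(P)|$ vertices by maximality of $P$. The number of such path-components is what needs bounding: each maximal run of degree-two vertices sits ``between'' leaves and/or branch vertices, and a short combinatorial argument (e.g. rooting $T$ and charging each such run to the branch vertex or leaf immediately below it, or counting components after the deletion) shows there are at most $|B| + |L| \le 2(|L|-1) + 1$ of them; I would aim for the clean bound that the number of bare-path blocks among the degree-two vertices is at most $2(|L|-1)$ after absorbing the $+1$ suitably, or simply carry the slack. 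Thus the degree-two vertices number at most $2(|L|-1)\,|V(P)|$, give or take the bookkeeping constant.

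Combining the three estimates gives
\[
|V(T)| = |L| + |B| + (\text{\#degree-two vertices}) \le |L| + (|L|-1) + 2(|L|-1)|V(P)|,
\]
and one checks that the right-hand side is at most $2(|V(P)|+1)(|L|-1)$ once $|L| \ge 2$ (the case $|L| \le 2$, i.e. $T$ a path, being immediate since then $P = T$ up to the two endpoints). The main obstacle — and the only place any care is needed — is the combinatorial count of how many maximal degree-two runs can occur: one must argue that each such run can be injectively charged to a nearby leaf or branch vertex so that the total is at most $2(|L|-1)$, and then verify that the arithmetic collapses exactly into the stated product form. Everything else is routine degree-counting.
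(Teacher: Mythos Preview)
Your approach is essentially the same as the paper's: split $V(T)$ into leaves, branch vertices, and degree-two vertices; bound $|B|$ via the degree-sum identity; bound the number of maximal bare paths by $|L|+|B|$ via a rooting/charging argument; and multiply out. The paper does exactly this (rooting at a leaf and charging each maximal bare path to the adjacent leaf or branch vertex farther from the root).

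There is, however, a small but genuine arithmetic slip in your write-up that makes the final inequality fail. Your own degree-sum argument actually gives $|B|\le |L|-2$, not merely $|B|\le |L|-1$: from $\sum_v(\deg_T(v)-2)=-2$ with leaves contributing $-|L|$ and branch vertices contributing at least $+|B|$, one gets $|B|-|L|\le -2$. With the weaker bound $|B|\le |L|-1$ that you recorded, the number of maximal bare paths is only bounded by $|L|+|B|\le 2|L|-1$, and your displayed inequality
\[
|L|+(|L|-1)+2(|L|-1)|V(P)|\;\le\;2(|V(P)|+1)(|L|-1)
\]
is false (the left side exceeds the right by exactly $1$). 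Using the correct $|B|\le |L|-2$ gives at most $|L|+|B|\le 2(|L|-1)$ bare paths and
\[
|V(T)|\le |L|+|B|+|V(P)|\bigl(|L|+|B|\bigr)\le (|V(P)|+1)\cdot 2(|L|-1),
\]
which is precisely the claimed bound. So the ``absorbing the $+1$ suitably'' step is not needed once you use the sharp bound on $|B|$ that your argument already provides.
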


\begin{proof} Let~$V:=V(T)$, let~$L:=\{v\in V\mid\deg_T(v)=1\}$ and let $B:=\{v\in V\mid\deg_T(v)\ge 3\}$. Then we have $|B|\le |L|-2$, since
\[
-2=2|E(T)|-2|V(T)|=\sum_{v\in V}\big(\deg_T(v)-2\big)\ge |B|-|L|.
\]
Next, we root~$T$ at an arbitrary leaf. This allows us to injectively map the bare paths of~$T$ to the set~$L\cup B$ by assigning every bare path to the leaf or branching vertex adjacent to it farther away from the root. Therefore, the number of bare paths is at most~$|L|+|B|$. Since every vertex in~$V$ is either in~$L$, in~$B$, or in a bare path of~$T$, this implies that $|V|\le |L|+|B|+|V(P)|(|L|+|B|)$ and therefore the lemma.
\end{proof}

Consider the setting of \thmref{thm:universality}. Let~$c\in\mathbb{R}^+$ be sufficiently small and assume that~$n$ is sufficiently large. Let $T\in\mathcal{T}(n,\Delta)$ with $\log n\le\Delta\le c n^{1/2}$, and let~$G$ be an $(n,d)$-expander with
\[
d:=7\Delta n^{2/3}.
\]
Recall that $m:=m(n,d)=\lceil\frac{n}{2d}\rceil$. \lemref{lem:pathorleaves} tells us that~$T$ contains a bare path on~$50\Delta m$ vertices or has at least~$25\Delta m^2$ leaves, since $2(50\Delta m+1)(25\Delta m^2-1)< n$ for sufficiently large~$n$ and small~$c$. In fact, $2(50\Delta m+1)(25\Delta m^2-1)< n/\Delta$ for sufficiently large~$n$. Therefore, if $L$ is the set of leaves in~$T$, then $T-L$ still contains a bare path on~$50\Delta m$ vertices or has at least~$25\Delta m^2$ leaves, since~$|T-L|\ge n/\Delta$. Based on this observation, we consider three cases.

\paragraph{Case 1.} \emph{$T$ contains a first level bare path on at least~$50\Delta m$ vertices.}

In this case, we use \corref{cor:almostspanning} (Almost Spanning Tree Embedding) to first embed all of~$T$ in~$G$ except for the bare path (whose removal splits~$T$ into two rooted trees). Then we apply \corref{cor:hamilton} (Hamilton Connectivity) to also embed the bare path by connecting the two roots by a path covering all the unused vertices in~$G$. The details of this argument are given in \propref{prop:barepath}.

\paragraph{Case 2.} \emph{$T$ has at least~$25\Delta m^2$ first level leaves and contains a second level bare path on at least~$50\Delta m$ vertices.}

In this case,~$T$ has many leaves. We use \corref{cor:almostspanning} (Almost Spanning Tree Embedding) to first embed all of~$T$ in~$G$ except for the second level bare path and the leaves. Then we use \corref{cor:hamilton} (Hamilton Connectivity) to embed the bare path. Finally, we use \lemref{lem:starmatching} (Star Matching) to embed the leaves of~$T$. Note that once~$T$ without the leaves is embedded, we know which vertices in~$G$ are the images of the parents of the leaves of~$T$. We call these vertices in~$G$ the \emph{portals} of the leaves. In order to embed the leaves, we need to find a star matching in~$G$ between the set of portals and the set of vertices which remain free after the embedding of~$T$ without the leaves. However, if we are not careful, then after embedding~$T$ without the leaves (and thus fixing the set of portals), some of the remaining vertices of~$G$ may be not connected to any of the portals. As these vertices would prevent us from finding a star matching, we call them \emph{exceptional} vertices. We solve this problem by forcing the second level bare path to cover all exceptional vertices. The details of this argument are given in \propref{prop:leavepath}.

\paragraph{Case 3.} \emph{$T$ has at least~$25\Delta m^2$ first level leaves and at least~$25\Delta m^2$ second level leaves.}

In this case, $T$ has many (first level) leaves that are attached to second level leaves. We again use \corref{cor:almostspanning} (Almost Spanning Tree Embedding) to embed~$T$ without these two levels of leaves and then embed the leaves of each level separately using \lemref{lem:starmatching} (Star Matching). Here, there again may exist a set of exceptional vertices which can spoil the embedding of the second level leaves. We apply a similar argument as in Case~2, only this time the original leaves of~$T$ take the role that the second level bare path played before, \ie, cover the set of exceptional vertices. The details for this argument are given in \propref{prop:multileave}.

\paragraph{}In the remainder of this section we show three results (\propref{prop:barepath}, \propref{prop:leavepath}, and \propref{prop:multileave}) which cover the cases discussed above. \thmref{thm:universality} (Tree-Universality) is a direct consequence of these three propositions.

\begin{proposition}[Case 1]
\label{prop:barepath}
The statement of \thmref{thm:universality} holds with $\mathcal{T}(n,\Delta)$ restricted to trees that contain a first level bare path on at least~$50\Delta m$ vertices.
\end{proposition}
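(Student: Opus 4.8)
The plan is to follow the outline for Case~1: excise the interior of a long sub-path of the guaranteed bare path, embed the two pieces that remain, and then thread the excised path through the still-unused vertices of the host by means of a Hamilton path. Fix an $(n,d)$-expander $G$ with $d:=7\Delta n^{2/3}$, set $m:=m(n,d)$, and let $T\in\mathcal{T}(n,\Delta)$ contain a first level bare path $P$ on at least $50\Delta m$ vertices; throughout $n$ is large and $c$ is small. First I would reserve a small part of $G$ for the path: put $r:=\lceil 10n\log n/d\rceil$ and apply the Partition Lemma (\lemref{lem:partition}) with $k=2$ and part sizes $n-r$ and $r$. One checks that $d_U:=\tfrac{n-r}{5n}d$ satisfies $d_U\ge 2\Delta$ and that $d_R:=\tfrac{r}{5n}d\ge 2\log n$, so (using $\Delta\ge\log n$) the hypotheses of \lemref{lem:partition} hold, and a short computation gives $m':=m(n-r,d_U)=\lceil 5n/(2d)\rceil$. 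This yields a partition $V(G)=U\cup R$ with $|U|=n-r$ for which, by the ``moreover'' clause, $G[U]$ is an $(n-r,d_U)$-expander and $G[W]$ is a $(|W|,d_R)$-expander for every $W$ with $R\subseteq W$. The role of the constant $50$ is to guarantee that a sub-path of $P$ on $r+4\Delta m'+2$ vertices still fits inside $P$: discarding the ceilings, the required inequality $r+4\Delta m'+2\le 50\Delta m$ reduces to $10\log n+10\Delta\le 25\Delta$, which holds because $\Delta\ge\log n$, while the ceiling corrections are of order $O(\Delta)=O(cn^{1/3})$ and are absorbed by the slack $\tfrac{5\Delta n}{d}=\tfrac{5n^{1/3}}{7}$ once $c$ is small. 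I expect this bookkeeping --- verifying that $R$ together with the $4\Delta m(n-r,d_U)$ vertices ``wasted'' by the almost-spanning embedding can be hidden inside a bare path of only $50\Delta m$ vertices --- to be the one genuinely delicate point; the rest is just assembling \corref{cor:almostspanning}, \corref{cor:hamilton}, and \lemref{lem:partition}.

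Next I would split and embed $T$. Write $P=(p_0,\dots,p_L)$ with $L+1=|V(P)|\ge 50\Delta m$, set $s:=r+4\Delta m'+1$, and let $Q:=(p_0,\dots,p_s)$, which is a sub-path of $P$ by the inequality above. Deleting the internal vertices $p_1,\dots,p_{s-1}$ of $Q$ disconnects $T$ into a subtree $T_1\ni r_1:=p_0$ and a subtree $T_2\ni r_2:=p_s$; since $p_0$ and $p_s$ have degree two in $T$ and one neighbour of each has been deleted, each of $r_1,r_2$ has degree at most one in its own subtree. Form the tree $T^{*}:=T_1\cup\{r_1r_2\}\cup T_2$ obtained by adding a single edge joining $T_1$ and $T_2$; then $\Delta(T^{*})\le\max\{\Delta,2\}=\Delta$ and $|V(T^{*})|=|V(T_1)|+|V(T_2)|=n-(s-1)=(n-r)-4\Delta m(n-r,d_U)$. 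As $G[U]$ is an $(n-r,d_U)$-expander with $d_U\ge 2\Delta$, \corref{cor:almostspanning} makes it $\mathcal{T}(|V(T^{*})|,\Delta)$-universal, so I fix an embedding $\phi$ of $T^{*}$ into $G[U]$; forgetting the image of the edge $r_1r_2$, this embeds $T_1$ and $T_2$ into $G$ disjointly. Write $u_1:=\phi(r_1)$ and $u_2:=\phi(r_2)$.

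Finally I would thread the bare path. Let $W^{+}:=(V(G)\setminus\phi(V(T^{*})))\cup\{u_1,u_2\}$. Since $R\subseteq V(G)\setminus U\subseteq V(G)\setminus\phi(V(T^{*}))$ we have $R\subseteq W^{+}$, so $G[W^{+}]$ is a $(|W^{+}|,d_R)$-expander; moreover $|W^{+}|=(n-|V(T^{*})|)+2=s+1\ge r+2$, which tends to infinity, and $d_R\ge 2\log n\ge\log|W^{+}|$, so \corref{cor:hamilton} shows $G[W^{+}]$ is Hamilton-connected. Take a Hamilton path $u_1=w_0,w_1,\dots,w_s=u_2$ of $G[W^{+}]$ (it has exactly $s+1=|W^{+}|$ vertices) and extend $\phi$ by setting $\phi(p_t):=w_t$ for $1\le t\le s-1$. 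The extended map is injective --- its new images lie in $W^{+}\setminus\{u_1,u_2\}=V(G)\setminus\phi(V(T^{*}))$, disjoint from $\phi(V(T^{*}))$ --- and it sends each edge of $T$ to an edge of $G$: edges inside $T_1$ or $T_2$ are covered by the embedding of $T^{*}$, and each edge $p_tp_{t+1}$ of $Q$ maps to the edge $w_tw_{t+1}$ of the Hamilton path. Since $|V(T)|=n=|V(G)|$, this $\phi$ is a spanning embedding, so $G$ contains a copy of $T$, as required.
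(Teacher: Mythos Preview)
Your proof is correct and follows the same three-step strategy as the paper --- partition $V(G)$ via \lemref{lem:partition}, embed the non-path portion via \corref{cor:almostspanning}, and thread the excised path via \corref{cor:hamilton} --- with two small but welcome variations: you explicitly join $T_1$ and $T_2$ into a single tree $T^{*}$ before invoking \corref{cor:almostspanning} (the paper applies that tree-embedding corollary directly to the two-component forest, tacitly using the same bridge trick), and you adjoin the anchor images $u_1,u_2$ to the Hamilton-connectivity set $W^{+}$ rather than, as the paper does, locating neighbours of the anchors inside the reserved part via~(\ref{eq:partition}). Your calibration of $r$ and $s$ differs from the paper's (which simply takes the bare path of length exactly $50\Delta m$ and splits the host accordingly), but the arithmetic you outline checks out for small $c$.
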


\begin{proof}
We construct an embedding~$\phi$ of~$T$ onto~$G$ as follows. We split~$T$ into two parts and embed them consecutively. These parts are a (first level) bare path~$P$ on exactly~$50\Delta m$ vertices (chosen as a subpath of a longest bare path in~$T$) and the remaining forest~$F:=T[V(T)\setminus V(P)]$ on $n-50\Delta m$ vertices which consists of two trees. Note that~$|V(F)|\ge |V(P)|$ since~$50\Delta m\le n/3$ for sufficiently large~$n$. Let~$s_P$ and~$t_P$ be the two end-vertices of~$P$ and let~$s_F$ and~$t_F$, respectively, be their two neighbors in~$F$.

We construct an embedding~$\phi$ of~$T$ in~$G$ in two steps. The first step is to find an embedding~$\phi_F$ of the forest~$F$ in~$G$ and the second step is to find an embedding~$\phi_P$ of the path~$P$ in~$G[V\setminus\phi_F(F)]$. In this, we make sure that these embeddings satisfy $\{\phi_P(s_P),\phi_F(s_F)\}\in E(G)$ and $\{\phi_P(t_P),\phi_F(t_F)\}\in E(G)$.
 
We start by partitioning~$V$ into~$U_F$ and~$U_P$ which (partially) host the embeddings of~$F$ and~$P$. For this, we apply \lemref{lem:partition} (Partition Lemma) to partition~$V$ into two sets~$U_F$ and~$U_P$ with $|U_F|=|V(F)|+4\Delta m$ and $|U_P|=|V(P)|-4\Delta m$. Note that~$U_F$ and~$U_P$ are each of size at least $20\Delta m$. Since
\[
\frac{|U_P|}{5n}\,d\ge\frac{|U_P|}{10m}\ge 2\Delta~(\ge 2\log n),
\]
the prerequisites of \lemref{lem:partition} are satisfied. Thus, $G[U_F]$ is a $(|U_F|,2\Delta)$-expander and also $G[W_P]$ is a $(|W_P|,2\Delta)$-expander for every set~$W_P$ with~$U_P\subseteq W_P\subseteq V$.

Now, we turn to the actual constructions of~$\phi_F$ and~$\phi_P$. First, we determine~$\phi_F$. By \corref{cor:almostspanning} (Almost Spanning Tree Embedding), there exists an embedding~$\phi_F$ of~$F$ in~$G[U_F]$. Note that since this result only allows us to embed almost spanning trees, $U_F$ was chosen to be somewhat larger than~$|V(F)|$.

Next, we move the unused~$4\Delta m$ vertices of~$U_F$ to~$U_P$ and embed~$P$ by applying \corref{cor:hamilton} (Hamilton Connectivity). Let~$W_P:=V\setminus \phi_F(F)$. Since~$U_P\subseteq W_P$, we already know that $G[W_P]$ is a~$(50\Delta m,2\Delta)$-expander. Moreover, by the properties given in \lemref{lem:partition} (Partition Lemma), we can find two (distinct) vertices~$v\in N_G(\phi_F(s_F))\cap U_P$ and~$w\in N_G(\phi_F(t_F))\cap U_P$ to which we embed~$s_P$ and~$t_P$, respectively. Since~$2\Delta\ge\log n$, $G[W_P]$ contains a Hamilton path connecting~$v$ and~$w$ by \corref{cor:hamilton} (Hamilton Connectivity). Let~$\phi_P$ be the embedding of the bare path~$P$ to this Hamilton path. This concludes the construction of~$\phi$ and the proof of the proposition.
\end{proof}

\begin{proposition}[Case 2]
\label{prop:leavepath}
The statement of \thmref{thm:universality} holds with $\mathcal{T}(n,\Delta)$ restricted to trees that have at least~$25\Delta m^2$ first level leaves and contain a second level bare path on at least~$50\Delta m$ vertices.
\end{proposition}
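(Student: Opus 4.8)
The plan is to follow the three-step scheme sketched just before the statement: decompose $T$ into a forest, a bare path, and the leaves, and embed these pieces in turn using \corref{cor:almostspanning} (Almost Spanning Tree Embedding), \corref{cor:hamilton} (Hamilton Connectivity) and \lemref{lem:starmatching} (Star Matching), with \lemref{lem:partition} (Partition Lemma) preparing a suitable partition of $V(G)$ and \lemref{lem:exceptionalset} (Small Exceptional Sets) controlling where the last step can fail. Write $T^-:=T-L$, let $P$ be a bare path of $T^-$ on exactly $50\Delta m$ vertices chosen as a subpath of a longest bare path of $T^-$ (so its endpoints $s_P,t_P$ still have degree two in $T^-$), let $A,B$ be the two trees into which deleting $V(P)$ splits $T^-$, and let $a_0\in V(A)$, $b_0\in V(B)$ be the $T^-$-neighbours of $s_P$, $t_P$; thus $V(T)=V(A)\sqcup V(B)\sqcup V(P)\sqcup L$. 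I would then use \lemref{lem:partition} to split $V(G)$ into a forest part $U_{AB}$ (with enough slack built in to accommodate the almost-spanning embedding), a path part $U_P$, and a leaf part $U_L$ of size exactly $|L|$; the hypotheses of \lemref{lem:partition} hold because $\Delta\ge\log n$ and $d=7\Delta n^{2/3}$, and — crucially — the density $d_L$ attached to $U_L$ satisfies $d_L\ge\Delta+m$, which one gets from $|L|\ge 25\Delta m^2$ and $m\ge n^{1/3}/(14\Delta)$ provided the absolute constant $c$ in $\Delta\le cn^{1/3}$ is small enough. In particular $G[U_{AB}]$ and $G[W]$ (for every $W\supseteq U_P$) are $(\cdot\,,2\Delta)$-expanders, and $|N_G(X)\cap U_L|\ge d_L|X|$ for all $X$ with $1\le|X|<m$.

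Next I would embed $A\sqcup B$: applying \corref{cor:almostspanning} inside $G[U_{AB}]$ (to each of the two trees) gives an embedding $\phi_0$ of $A\sqcup B$ that leaves a small slack of vertices of $U_{AB}$ unused; in particular $\phi_0(a_0),\phi_0(b_0)$ are fixed, and so is the \emph{portal set} $\Pi:=\phi_0(S)$, where $S$ is the set of vertices of $A\cup B$ carrying at least one leaf of $T$. Now comes the key point. If $|\Pi|\ge m^2$, then since $G$ satisfies~(\eb), \lemref{lem:exceptionalset} — with the role of its set $W$ played by $\Pi$ — shows that at most $m-1$ vertices of $V(G)\setminus\Pi$ have fewer than $m$ neighbours in $\Pi$. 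Let $D$ be the set of these ``bad'' vertices that are still unused after $\phi_0$; then $|D|\le m-1$, and every still-unused vertex outside $D$ has at least $m$ neighbours in $\Pi$, hence also in the eventual full portal set.

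Then I would embed $P$ so as to swallow $D$. Take $W_P:=U_P\cup\big(U_{AB}\setminus\phi_0(A\cup B)\big)$ and modify it by exchanging at most $|D\cap U_L|\le m-1$ of the slack vertices of $U_{AB}$ for the vertices of $D\cap U_L$; this keeps $|W_P|=50\Delta m$ and $U_P\subseteq W_P$, and achieves $D\subseteq W_P$. Since $U_P\subseteq W_P$, the graph $G[W_P]$ is a $(50\Delta m,2\Delta)$-expander, hence Hamilton-connected by \corref{cor:hamilton}; using~(\ref{eq:partition}) one may pick distinct $v\in N_G(\phi_0(a_0))\cap U_P$ and $w\in N_G(\phi_0(b_0))\cap U_P$ and a Hamilton path of $G[W_P]$ from $v$ to $w$, which gives an embedding $\phi_1$ of $P$ with $s_P\mapsto v$, $t_P\mapsto w$. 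The leftover free set $U_L^{\star}:=V(G)\setminus(\phi_0(A\cup B)\cup\phi_1(P))$ then has size exactly $|L|$, contains no vertex of $D$, and differs from $U_L$ in at most $m-1$ vertices; hence $|N_G(X)\cap U_L^{\star}|\ge d_L|X|-(m-1)\ge\Delta|X|$ for $1\le|X|\le m$, while~(\eb) gives $e_G(X,Y)>0$ for disjoint $m$-sets $X\subseteq\Pi'$, $Y\subseteq U_L^{\star}$ (here $\Pi'$ is the full portal set), and every $w\in U_L^{\star}$ has at least $m$ neighbours in $\Pi'\supseteq\Pi$ because $D\cap U_L^{\star}=\emptyset$. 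So \lemref{lem:starmatching}, with the map $k$ recording the number of leaves hanging off each portal, produces the embedding of $L$ and hence the desired $\phi$.

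The hard part is the case $|\Pi|<m^2$, i.e.\ when few leaves of $T$ are attached to $A\cup B$: then the bulk of the leaves hangs off $P$ itself, but the vertices of $P$ become portals only \emph{after} $P$ is embedded, so ``route $P$ through the bad vertices'' is circular — which free vertices are bad depends on where the support vertices of $P$ land along the Hamilton path. I would resolve this by a sub-case argument: if the set $L_P$ of leaves attached to $V(P)$ has $|L_P|\ge 24\Delta m^2$, then — since each of the $50\Delta m$ vertices of $P$ has degree two in $T^-$ and so carries at most $\Delta-2$ leaves — one gets $24\Delta m^2\le 50\Delta^2 m$, whence $m<3\Delta$ and therefore $|V(P)|=50\Delta m>m^2$. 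This makes it possible to first fix the vertex set $W_P$ that the spine of $P$ will occupy (a $(50\Delta m,2\Delta)$-expander containing $U_P$), apply \lemref{lem:exceptionalset} with $W_P$ in the role of its set $W$ to bound by $m-1$ the vertices of $V(G)\setminus W_P$ with fewer than $m$ neighbours in $W_P$, absorb those vertices into the slack of $U_{AB}$ before choosing the Hamilton path, and only then embed the leaves $L_A\cup L_B\cup L_P$ by \lemref{lem:starmatching} with the portal set $\phi_1(S_P)$, where $S_P\subseteq V(P)$ is the set of support vertices on the spine. Making this double surgery close — keeping $W_P$ a genuine expander, keeping $U_L^{\star}$ of the right size and still expanding, and ensuring that no free vertex is left portal-isolated — is the delicate bookkeeping of the proof.
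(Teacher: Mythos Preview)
Your main line (the case $|\Pi|\ge m^2$) is essentially the paper's argument, but the sub-case $|\Pi|<m^2$ contains a genuine gap that your final paragraph does not close. Applying \lemref{lem:exceptionalset} with $W_P$ in the role of its $W$ only tells you that all but $m-1$ of the still-free vertices have at least $m$ neighbours \emph{in $W_P$}. It does \emph{not} say they have $\ge m$ neighbours in the eventual portal set $\phi_1(S_P)\subsetneq W_P$. Since the Hamilton path is chosen only after $W_P$ is fixed, you have no control over \emph{which} vertices of $W_P$ become images of support vertices of $P$; a free vertex could easily have all of its $\ge m$ neighbours in $W_P$ land on non-support positions of $P$. (Indeed, from your own inequality $m<3\Delta$ you only get $|S_P|/|W_P|\ge 24m/(50\Delta)$, which can be arbitrarily small.) Hence condition~(iii) of \lemref{lem:starmatching} is not verified for the actual portal set, and ``absorb those vertices into the slack of $U_{AB}$'' does not break the circularity you yourself flag: you are absorbing the wrong set of exceptional vertices.

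The paper avoids this second case altogether by choosing $P$ more carefully. Rather than taking a single second-level bare path on $50\Delta m$ vertices, it splits such a path into two disjoint sub-paths of $25\Delta m$ vertices each and lets $P$ be the one carrying \emph{fewer} vertices of $K=N_T(L)$. This pigeonhole choice forces the forest $F=T-L-P$ to contain at least half of the portals, so $|K_F|\ge |K|/2\ge m^2$ holds unconditionally. One is then always in your first case: after embedding $F$, the set $\phi_F(K_F)$ is large enough for \lemref{lem:exceptionalset}, the $\le m-1$ exceptional vertices are pushed into $W_P$, and the single Star-Matching step finishes the embedding. No sub-case analysis is needed.
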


\begin{proof}
Let~$L\subseteq V(T)$ be the set of (first level) leaves of~$T$. By the assumptions of the proposition, we have $|L|\ge 25\Delta m^2$. Let~$K$ be the set of neighbors of the leaves in~$T$, \ie,~$K=N_T(L)$.

Since~$T$ contains a second level bare path on at least~$50\Delta m$ vertices, we can find two vertex-disjoint second level bare paths on exactly~$25\Delta m$ vertices in~$T$ (e.g., two subpaths of a longest bare path). Of these two second level bare paths, let~$P$ be the one that contains \emph{at most} $|K|/2$ vertices of~$K$. 

Let~$F$ be the forest~$T[V(T)\setminus(L\cup V(P))]$; note that $F$ consists of two trees. Like in \propref{prop:barepath}, let~$s_P$ and~$t_P$ be the end-vertices of~$P$ and let~$s_F$ and~$t_F$ be their respective neighbors in~$F$.

We partition~$K$ into~$K_F:=K\cap V(F)$ and~$K_P:=K\cap V(P)$. Since each vertex in~$K$ is adjacent to at most~$\Delta$ vertices in~$L$, we have $|K|\ge 25 m^2$. Moreover, $|K_F|\ge| K_P|$ by the choice of~$P$ and therefore
\begin{equation}
\label{eq:manyforestportals}
|K_F|\ge m^2.
\end{equation}

Similarly as in \propref{prop:barepath}, we construct an embedding~$\phi$ of~$T$ in~$G$ in several steps. First, we construct an embedding~$\phi_F$ of the forest~$F$, then an embedding~$\phi_P$ of the path~$P$, and finally an embedding~$\phi_L$ of the leaves~$L$. In this, we make sure that the images of~$s_P$~and~$t_P$ are adjacent to those of~$s_F$ and~$t_F$, respectively, and that the image of each leaf in~$L$ is adjacent to the image of its respective neighbor in~$K$.
 
Again, we partition~$G$ into sets which partially host the embeddings of~$F$,~$P$, and~$L$. For this, we apply \lemref{lem:partition} (Partition Lemma) to partition~$V$ into the three parts~$U_F$, $U_P$, and~$U_L$ satisfying~$|U_F|=|V(F)|+4\Delta m$, $|U_L|=|L|+\Delta m$, and~$|U_P|=|V(P)|-5\Delta m$. Then~$U_F$ and~$U_P$ are each of size at least $20\Delta m$ and~$U_L$ is of size at least~$20\Delta m^2$. Hence, the subgraph $G[U_F]$ is a $(|U_F|,2\Delta)$-expander and also $G[W_P]$ is a $(|W_P|,2\Delta)$-expander for every set~$W_P$ with~$U_P\subseteq W_P\subseteq V$. Moreover, for every set~$X\subseteq V$ with~$1\le |X|< m$, it holds that
\[
|N_G(X)\cap U_L|\ge 2\Delta m |X|
\]
and therefore we have, for every set~$W_L$ of size~$|L|$ with $W_L\subseteq U_L$, that
\begin{equation}
\label{eq:lexpansion}
|N_G(X)\cap W_L|\ge |N_G(X)\cap U_L|-\Delta m\ge \Delta |X|
\end{equation}
for all sets~$X\subseteq V\setminus W_L$ with~$1\le |X|< m$.

In order to construct~$\phi$, we first apply \corref{cor:almostspanning} (Almost Spanning Tree Embedding) to find an embedding~$\phi_F$ of~$F$ in~$U_F$. Let~$W_F:=\phi(F)$. Later, we move the remaining~$4\Delta m$ vertices in~$U_F\setminus W_F$ to~$U_P$. 

Next, we embed the second level bare path~$P$. However, before doing so, we identify the exceptional set of vertices~$Z\subseteq U_L$ which might later spoil the application of \lemref{lem:starmatching} (Star Matching) for the embedding of~$L$. This set, denoted by~$Z$, contains all vertices in~$U_L$ that have fewer than~$m$ neighbors in~$\phi(K)$, the set of portals. At this point of the construction of~$\phi$, we only know~$\phi(K_F)$, which is equal to~$\phi_F(K_F)$. However, since~$|K_F|\ge |K_P|$ and therefore~$|\phi_F(K_F)|\ge |\phi(K)|/2$, we may already define~$Z$.

Let~$Z:=\{u\in U_L\mid |N_G(u)\cap\phi_F(K_F)|<m\}$. We already know that $|\phi(K_F)|=|K_F|\ge m^2$. Thus, since~$G$ is an $(n,d)$-expander, we have by \lemref{lem:exceptionalset} (Small Exceptional Sets) that $|Z|\le m\le\Delta m$. Let~$W_L\subseteq U_L$ be an arbitrary set of size~$|L|$ that contains no vertex in~$Z$. In the third step of the embedding, $W_L$ will be the image of~$L$ under~$\phi_L$. Note that however we embed~$K_P$, this choice of~$W_L$ ensures that~$|N_G(u)\cap\phi(K)|\ge m$ holds for every vertex~$u\in W_L$. In fact, the only reason why we separated the embedding of the second level bare path~$P$ from the embedding of~$F$ is to take care of the exceptional set~$Z$.

Now, we return to the embedding~$\phi_P$ of the second level bare path~$P$. So far, we constructed the embedding~$\phi_F$ of~$F$ to the set~$W_F\subseteq U_F$ of size~$|V(F)|$ and reserved the set~$W_L\subseteq U_L$ of size~$|L|$ for the embedding~$\phi_L$ of~$L$. Let~$W_P:=V\setminus(W_F\cup W_L)$. Then~$|W_P|=|P|$ and we have already seen that $G[W_P]$ is a $(|W_P|,2\Delta)$-expander. Moreover, as in the proof of the previous proposition, we can choose two distinct vertices~$v\in N_G(\phi_F(s_F))\cap U_P$ and~$w\in N_G(\phi_F(t_F))\cap U_P$ as the images of~$s_P$ and~$t_P$ in~$\phi_P$, respectively. Afterwards, we define~$\phi_P$ by embedding~$P$ onto a Hamilton path between~$v$ and~$w$ in~$G[W_P]$ given by \corref{cor:hamilton} (Hamilton Connectivity).

Finally, we construct an embedding~$\phi_L$ of~$L$ by applying \lemref{lem:starmatching} (Star Matching). At this point, the embedding of~$K$ is already given by the embeddings~$\phi_F$ of~$K_F$ and~$\phi_P$ of~$K_P$. Thus, it suffices to verify that the conditions of \lemref{lem:starmatching} are satisfied. Condition~(i) holds by~(\ref{eq:lexpansion}), condition~(ii) holds since~$G$ is an $(n,d)$-expander, and condition~(iii) holds since we excluded~$Z$ from~$U_L$ when choosing~$W_L$. Thus, we find an embedding~$\phi_L$ of~$L$ to~$G$ that respects the edges between~$K$ and~$L$ in~$T$. This concludes the construction of~$\phi$ and the proof of the proposition.
\end{proof}

\begin{proposition}[Case 3]
\label{prop:multileave}
The statement of \thmref{thm:universality} holds with $\mathcal{T}(n,\Delta)$ restricted to trees that have at least~$25\Delta m^2$ (first level) leaves and at least~$25\Delta m^2$ second level leaves.
\end{proposition}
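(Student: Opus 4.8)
The argument should run in close parallel to that of \propref{prop:leavepath}, with the first level leaves of $T$ taking over the role played there by the bare path $P$. Write $L$ for the set of first level leaves of $T$ and $L'$ for the set of second level leaves, so $|L|,|L'|\ge 25\Delta m^2$ by hypothesis, and set $F:=T[V(T)\setminus(L\cup L')]$; since deleting all leaves of a tree leaves a (smaller) tree, $F$ is again a tree, and $V(F)$ is precisely the set of non-leaves of $T-L$. Let $K':=N_T(L')\cap V(F)$ be the set of neighbours in $F$ of the second level leaves and $K:=N_T(L)\subseteq V(F)\cup L'$ the set of neighbours of the first level leaves. The key structural remark I would record first is that \emph{every second level leaf carries at least one first level leaf of $T$}: a vertex $v\in L'$ has $\deg_{T-L}(v)=1$, so if no first level leaf were attached to $v$ we would have $\deg_T(v)=1$, i.e.\ $v\in L$, a contradiction. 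Hence $L'\subseteq K$, and since every vertex of $K\cup K'$ has $T$-degree at most $\Delta$, we get $|K'|\ge|L'|/\Delta\ge 25m^2$ and therefore also $|V(F)|\ge|K'|\ge m^2$, so $F$ is far from degenerate and \lemref{lem:exceptionalset} will apply to $\phi(K')$.

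The plan then proceeds in the usual three stages. First apply \lemref{lem:partition} (Partition Lemma) to split $V(G)$ into $U_F,U_{L'},U_L$ of sizes $|V(F)|+20\Delta m$, $|L'|-19\Delta m$ and $|L|-\Delta m$ (they sum to $n$, each is at least of order $\Delta m$, so the prerequisite $\tfrac{|U_i|}{5n}d\ge 2\log n$ holds); this makes $G[U_F]$ a $(|U_F|,2\Delta)$-expander and, crucially, ensures $|N_G(v)\cap U_{L'}|\ge\tfrac{|U_{L'}|}{5n}d$ for \emph{every} vertex $v$. Stage~1: embed $F$ onto a set $W_F\subseteq U_F$ of size $|V(F)|$ using \corref{cor:almostspanning}, and call $L_F:=U_F\setminus W_F$ the $20\Delta m$ leftover vertices. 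Stage~2: since $|\phi(K')|=|K'|\ge m^2$, \lemref{lem:exceptionalset} (Small Exceptional Sets) gives that $Z':=\{v\in V(G):|N_G(v)\cap\phi(K')|<m\}$ has at most $m$ elements; choose $W_{L'}\subseteq U_{L'}\cup L_F$ with $|W_{L'}|=|L'|$ and $W_{L'}\cap Z'=\emptyset$, discarding (besides $Z'$) preferentially vertices of $L_F$, so that $U_{L'}\setminus W_{L'}$ has at most $m$ vertices; then verify conditions (i)--(iii) of \lemref{lem:starmatching} for the pair $(\phi(K'),W_{L'})$ — (i) from the Partition Lemma bound on $U_{L'}$ minus the $\le m$ discarded vertices (and, for $|X|=m$, from $|W_{L'}|\ge 25\Delta m^2$ and condition~(\eb)), (ii) because $G$ is an $(n,d)$-expander, (iii) because $W_{L'}$ avoids $Z'$ — obtaining an embedding $\phi_{L'}$ of $L'$ onto $W_{L'}$ respecting the edges between $K'$ and $L'$ in $T$. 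Stage~3: set $R_L:=V(G)\setminus(W_F\cup W_{L'})$, so $|R_L|=|L|$ and $R_L$ contains $Z'\cap(U_{L'}\cup L_F)$, that is, exactly the vertices that Stage~2 could not absorb; embed $L$ onto $R_L$ by a second application of \lemref{lem:starmatching}, with portals $\phi(K)=\phi(K\cap V(F))\cup W_{L'}$. Gluing $\phi_F$, $\phi_{L'}$ and $\phi_L$ produces the required embedding of $T$ onto $G$.

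The one genuinely delicate point — and the place where the ``first level leaves cover the exceptional set'' idea has to earn its keep — is condition (iii) of \lemref{lem:starmatching} in Stage~3: every vertex of $R_L$ must have at least $m$ neighbours among the portals $\phi(K)$, even though, unlike the Hamilton path of $P$ in \propref{prop:leavepath}, first level leaves cannot be routed onto an arbitrary target set. The resolution is that $\phi(K)\supseteq\phi(L')=W_{L'}$ — this is exactly why the remark $L'\subseteq K$ is needed — and $W_{L'}$ was arranged to contain all but at most $m$ vertices of the whole partition part $U_{L'}$; since $|N_G(v)\cap U_{L'}|\ge\tfrac{|U_{L'}|}{5n}d$ holds for \emph{every} $v\in V(G)$ by the Partition Lemma, and $\tfrac{|U_{L'}|}{5n}d\ge 2m$ once $\Delta\le cn^{1/3}$ with $c$ small enough (in exactly the constant-bookkeeping style of \propref{prop:barepath} and \propref{prop:leavepath}), every vertex of $V(G)$, in particular every vertex of $R_L$, has at least $m$ neighbours in $W_{L'}\subseteq\phi(K)$. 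In other words, the $L$-matching carries no exceptional vertices of its own, so $Z'$ is the only exceptional set one has to dispose of, and the first level leaves dispose of it. Conditions (i) and (ii) in Stage~3 then follow routinely from $R_L\supseteq U_L$ (up to a handful of vertices), from $|R_L|\ge 25\Delta m^2$, and from $G$ being an $(n,d)$-expander, and the remaining size inequalities are entirely analogous to the two previous cases.
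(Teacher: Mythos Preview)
Your proof is correct and follows the same three–stage strategy as the paper (embed the non-leaf part via \corref{cor:almostspanning}, then two rounds of \lemref{lem:starmatching}, using \lemref{lem:exceptionalset} to control the only exceptional set and the observation $L'\subseteq K$ to make the final matching exception-free). The one substantive difference is that the paper does \emph{not} strip all first and second level leaves: it fixes a subset $M$ of exactly $25\Delta m^2$ second level leaves and removes only $M$ together with the first level leaves pendant at $M$, so that the residual tree $F$ still has $n-O(\Delta^2m^2)$ vertices; your $F$, by contrast, can be as small as $\Theta(m^2)$. This forces you to work harder with the Partition Lemma (hence your $20\Delta m$ slack on $U_F$, matching $4\Delta\,m(|U_F|,d_F)\le 20\Delta m$ in \corref{cor:almostspanning}, and the shuttling of $L_F$ into $W_{L'}$), whereas in the paper $|U_F|$ is essentially $n$ and the embedding of $F$ is immediate with only $4\Delta m$ slack. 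Both routes are sound; the paper's subset trick simply keeps the arithmetic lighter.
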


\begin{proof}
Let~$L'$ be the set of first level leaves of~$T$ and~$M'$ be the set of second level leaves of~$T$, \ie, the set of leaves of~$T[V(T)\setminus L]$. By the assumptions of the proposition, $|L'|\ge 25\Delta m^2$ and~$|M'|\ge 25\Delta m^2$. Note that every second level leaf~$v\in M'$ has at least one first level leaf attached to it since otherwise~$v$ would have been in~$L'$ to begin with.

As in the proof of \propref{prop:leavepath}, we split~$T$ into three parts. Let~$M$ be an arbitrary subset of~$M'$ of size exactly~$25\Delta m^2$, let~$L:=N_T(M)\cap L'$ be the set of first level leaves with neighbors in~$M$, and let~$F$ be the induced subtree $F:=T[V(T)\setminus(L\cup M)]$. Let~$K:=N_{T-L}(M)$ be the neighbors of the second level leaves~$M$ in~$T$ without the first level leaves. Then~$|K|\ge 25 m^2$. Note that by definition~$M$ is the set of portals of~$L$, that is~$M=N_T(L)$. Also note that $|M\cup L|\le 25\Delta^2m^2$ and thus $|V(F)|\ge 16\Delta m$ for sufficiently large~$n$.

We construct an embedding~$\phi$ of~$T$ in~$G$ by defining three partial embeddings~$\phi_F$,~$\phi_M$, and~$\phi_L$ of~$F$, $M$, and~$L$, respectively. As before, we make sure that these embeddings respect the edges linking~$K$ to~$M$ and~$M$ to~$L$ in~$T$. A crucial step in this process will be again to handle the exceptional set~$Z$ that might spoil the embedding of~$M$. In the proof of \propref{prop:leavepath}, we forced~$Z$ to be covered by the image of the second level path, now we will force~$Z$ to be covered by the images of~$L$.

We again apply \lemref{lem:partition} (Partition Lemma) and partition~$V$ into three parts~$U_F$, $U_M$, and~$U_L$ satisfying~$|U_F|=|V(F)|+4\Delta m$, $|U_M|=|M|+\Delta m$, and $|U_L|=|L|-5\Delta m$. This implies that~$U_F$ is of size at least $20\Delta m$ and that~$U_M$ and~$U_L$ are each of size at least $20\Delta m^2$. Hence, $G[U_F]$ is a $(|U_F|,2\Delta)$-expander. Moreover, for every set~$W_L$ of size~$|L|$ with $U_L\subseteq W_L$ we have for all sets~$X\subseteq V$ with~$1\le |X|< m$, that
\begin{equation}
\label{eq:lmexpansion}
|N_G(X)\cap W_L|\ge |N_G(X)\cap U_L|\ge 2\Delta m|X|
\end{equation}
and for every set~$W_M$ of size~$|M|$ with $W_M\subseteq U_M$ we have for all sets~$X\subseteq V$ with~$1\le |X|< m$, that
\begin{equation}
\label{eq:mexpansion}
|N_G(X)\cap W_M|\ge |N_G(X)\cap U_M|-\Delta m\ge \Delta m|X|.
\end{equation}

The construction of the embedding~$\phi$ closely follows that of the previous proposition. We first apply \corref{cor:almostspanning} (Almost Spanning Tree Embedding) to find an embedding~$\phi_F$ of~$F$ in~$U_F$. Let~$W_F:=\phi_F(F)$. We later move~$U_F\setminus W_F$ to~$U_L$.

Next, we give an embedding~$\phi_M$ of~$M$. Let~$Z$ be the exceptional vertices in~$M$, \ie, let $Z:=\{u\in U_M\mid |N_G(u)\cap\phi_F(K)|<m\}$. Then~$|Z|<m$ by \lemref{lem:exceptionalset} (Small Exceptional Sets). Let~$W_M$ be an arbitrary subset of~$U_M\setminus Z$ of size~$|M|$ and let~$W_L$ be the set given by~$V\setminus(W_F\cup W_M)$. Then, by the choice of~$W_M$ and by~(\ref{eq:lmexpansion}) and~(\ref{eq:mexpansion}), we can apply \lemref{lem:starmatching} (Star Matching) to find an embedding~$\phi_M$ of~$M$ in~$U_M$ which respects the edges between~$K$ and~$M$ in~$T$.

Finally, we give an embedding~$\phi_L$ of~$L$. Because of~(\ref{eq:lmexpansion}) and~(\ref{eq:mexpansion}), the preconditions of \lemref{lem:starmatching} (Star Matching) also hold for~$L$. Note that this time there is no exceptional set, since~(\ref{eq:mexpansion}) guarantees the minimum degree constraint~(iii) in \lemref{lem:starmatching}. Hence, we can embed~$L$ such that the edges between~$M$ and~$L$ in~$T$ are respected. This concludes the proof of the proposition and hence of \thmref{thm:universality}.
\end{proof}

\section{Random Graphs}
\label{sec:random}
The typical random graph is one of the most prominent examples of a strong expander. Random graphs a.a.s.\ have the property that between all pairs of sufficiently large vertex sets the edge density is concentrated around its expectation. The following result links this pseudo-random property to $(n,d)$-expansion and allows us to show that binomial random graphs and random regular graphs with sufficiently large (expected) degree are $(n,d)$-expanders.
\begin{lemma}
\label{lem:random}
Let~$C\in\mathbb{R}^+$, let~$(\mathcal{G}_n)_{n\in\mathbb{N}}$ be a sequence of probability distributions over all graphs on $n$ vertices, and let~$d\colon\mathbb{N}\to\mathbb{R}^+$ satisfy~$d\ge 3$. Suppose there exists an absolute constant~$n_0\in\mathbb{N}$ such that, for all $n\ge n_0$, for~$G$ drawn according to~$\mathcal{G}_n$, and for all disjoint sets $X,Y\subseteq V(G)$ that satisfy~$1\le |X|< m(n,d)$ and $|Y|=n-\lceil(d+1)|X|\rceil+1$ or satisfy~$|X|=|Y|=m(n,d)$, it holds that
\begin{equation}
  \label{eq:random}
  \Pr\big[e_G(X,Y)=0\big]\le C n^{-\frac{6d|X||Y|}{n}}.
\end{equation}
Then a random graph drawn according to~$\mathcal{G}_n$ is a.a.s.\ an $(n,d)$-expander.
\end{lemma}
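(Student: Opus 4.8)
The goal is to verify conditions (\ea) and (\eb) of \defref{def:expander} for a graph $G$ drawn according to $\mathcal{G}_n$, using only the hypothesis~(\ref{eq:random}) together with a union bound. The plan is to treat the two conditions separately and to phrase each as the failure event being a union of ``bad'' events indexed by small (or medium-sized) vertex sets, each of which has probability bounded by~(\ref{eq:random}). The key observation that makes (\ea) tractable is that $|N_G(X)| \ge d|X|$ fails if and only if there is a set $Y$ disjoint from $X$, of size exactly $n - |X| - (\lceil d|X|\rceil - 1) = n - \lceil (d+1)|X|\rceil + 1$, with $e_G(X,Y) = 0$ (take $Y$ to be the complement of $X \cup N_G(X)$, padded or trimmed to the right size); this is precisely the first family of pairs $(X,Y)$ appearing in the hypothesis. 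Similarly, (\eb) fails exactly when some disjoint pair $X,Y$ with $|X| = |Y| = m(n,d)$ has $e_G(X,Y) = 0$, which is the second family.

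\textbf{Step 1: Reduce (\ea) to a union bound.} Fix $n \ge n_0$ and write $m := m(n,d)$. For each $i$ with $1 \le i < m$, the event that some set $X$ of size $i$ violates $|N_G(X)| \ge d|X|$ is contained in the event that there exist disjoint $X, Y$ with $|X| = i$ and $|Y| = n - \lceil (d+1)i \rceil + 1$ and $e_G(X,Y) = 0$. By~(\ref{eq:random}), each such pair contributes at most $C n^{-6d i |Y|/n}$, and there are at most $\binom{n}{i}\binom{n}{|Y|} \le n^i \cdot n^{|Y|}$ such ordered pairs. So the probability that (\ea) fails is at most
\[
\sum_{i=1}^{m-1} n^{i + |Y_i|}\, C\, n^{-\frac{6 d\, i\, |Y_i|}{n}}, \qquad |Y_i| := n - \lceil (d+1)i\rceil + 1.
\]
The main task here is to check that the exponent $i + |Y_i| - \frac{6 d i |Y_i|}{n}$ is sufficiently negative — concretely, that it is at most $-2i$ or so — so that the sum over $i$ is $O(n^{-1})$. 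Since $|Y_i| \ge n - (d+1)m \ge n - (d+1)\cdot\frac{n}{2d} - 1 \ge n/3$ for $d \ge 3$ and $n$ large, we have $\frac{6 d i |Y_i|}{n} \ge 2 d i \ge 6i$, which comfortably beats $i + |Y_i| \le i + n$; one must be a little careful because $|Y_i|$ itself is of order $n$, but $\frac{6 d i |Y_i|}{n} - |Y_i| = |Y_i|\big(\frac{6di}{n} - 1\big)$ and $\frac{6di}{n} \ge \frac{6d}{n} > 1$ already fails for small $d$, so instead one uses $\frac{6di|Y_i|}{n} \ge 2di \ge 2|Y_i| \cdot \frac{di}{n} $... the clean way is: $\frac{6di|Y_i|}{n} \ge \frac{6di}{n}\cdot\frac{n}{3} = 2di \ge 2i + 2\cdot\frac{di}{2} \ge 2i + \frac{n}{2}\cdot\frac{i}{m}\cdots$ — in any case the honest check is that $\frac{6di|Y_i|}{n} - i - |Y_i| \ge i + 1$ for all $1 \le i < m$, $d \ge 3$, $n \ge n_0$, which follows since $|Y_i| \le n$ gives $\frac{6di|Y_i|}{n} \ge 2di \ge 6i$ while $|Y_i| \le n - \lceil(d+1)i\rceil + 1 \le n - 4i + 1$... the inequality to pin down is $2di - i - (n - 4i + 1) \ge i+1$, i.e. $2di + 2i \ge n + 2$; this holds once $i$ is not too small relative to $n/d$, and for small $i$ one instead uses $|Y_i| \ge n/3$ directly in the product term. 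This bookkeeping — choosing, for each range of $i$, whether to lower-bound $|Y_i|$ in the product or upper-bound it in the additive term — is the only genuinely fiddly part, and it is routine.

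\textbf{Step 2: Reduce (\eb) to a union bound.} The event that (\eb) fails is the union over disjoint pairs $X, Y$ of size $m$ each of $\{e_G(X,Y) = 0\}$. There are at most $\binom{n}{m}^2 \le n^{2m}$ such pairs, and~(\ref{eq:random}) bounds each by $C n^{-6 d m^2/n}$. Since $m = \lceil n/(2d)\rceil \ge n/(2d)$, we get $\frac{6 d m^2}{n} \ge \frac{6 d m}{n}\cdot m \ge 3m$, so the total is at most $C n^{2m} n^{-3m} = C n^{-m}$, which tends to $0$ as $n \to \infty$ (note $m \ge n/(2d) \ge n/(2\cdot n/6) = 3 $, and in fact $m \to \infty$ unless $d = \Theta(n)$; if $d$ is linear in $n$ then $m$ is a constant but $n^{-m} \to 0$ still). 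Combining Steps 1 and 2 via a further union bound, the probability that $G$ fails to be an $(n,d)$-expander is $o(1)$, proving the lemma.

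\textbf{Main obstacle.} There is no conceptual obstacle — the whole argument is a two-line union bound once the right reformulation of (\ea) is spotted. The only thing requiring care is the arithmetic in Step 1: verifying that the exponent $i + |Y_i| - 6di|Y_i|/n$ is negative enough, uniformly over $1 \le i < m$ and over all admissible $d$ (including the regime $d = \Theta(n)$, where $m$ is bounded and one cannot rely on $m \to \infty$). Splitting the range of $i$ and, in each piece, deciding whether to exploit the largeness of $|Y_i|$ multiplicatively or its smallness additively handles this cleanly, but it is the step where an incautious estimate could go wrong.
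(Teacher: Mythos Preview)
Your overall strategy matches the paper's exactly: reformulate (\ea) via the complementary set $Y$, then union-bound over pairs $(X,Y)$ using~(\ref{eq:random}). Step~2 is correct and essentially identical to the paper's treatment of~(\eb).

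The gap is in Step~1, and it is not just bookkeeping. Your bound $\binom{n}{|Y_i|} \le n^{|Y_i|}$ is far too crude: take $d=3$ and $i=1$, so $|Y_1|=n-3$, and the exponent $i+|Y_i|-6di|Y_i|/n$ is then $1+(n-3) - 18(n-3)/n \approx n-20$, which is hugely positive. No splitting of the range of $i$ can repair this, since the failure already occurs at $i=1$; all the hedging in your write-up (``lower-bound $|Y_i|$ in the product or upper-bound it in the additive term'') cannot make a positive exponent of order $n$ become negative.

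The fix --- and this is what the paper does --- is to count $Y$ via its complement: since $|Y_i| = n - \lceil(d+1)i\rceil+1$, one has $\binom{n}{|Y_i|} = \binom{n}{\lceil(d+1)i\rceil-1} \le n^{(d+1)i}$. Combined with your correct observation that $|Y_i|\ge n/3$ gives $6di|Y_i|/n \ge 2di$, the exponent becomes at most $i + (d+1)i - 2di = (2-d)i \le -i$ for $d\ge 3$, and the sum over $1\le i<m$ is $O(n^{-1})$. With this single substitution the entire argument goes through cleanly, with no case-splitting at all.
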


\begin{proof}
Let~$m:=m(n,d)$, let~(\nea) be the event that there exist two disjoint sets $X,Y\subseteq V(G)$ with $1\le |X|< m$ and $|Y|= n-\lfloor(d+1)|X|\rfloor$ for which $e_G(X,Y)=0$ holds, and let~(\neb) be the event that there exist two disjoint sets $X,Y\subseteq V(G)$ with $|X|=|Y|=m$ for which $e_G(X,Y)=0$ holds. 

Clearly, $G$ is an $(n,d)$-expander if it satisfies neither~(\nea) nor~(\neb). Since $6dm^2/n\ge 3m$, we have by~\eqref{eq:random} and the union bound that
\[
\Pr\big[(\neb)\big]\le C\binom{n}{m}\binom{n-m}{m} n^{-3m}\le C n^{-1}.
\]

For~$1\le k< m$, we have
\[
\frac{6dk(n-\lfloor(d+1)k\rfloor)}{n}\ge 6dk-3\lfloor(d+1)k\rfloor\ge 2dk.
\]
 Thus, 
\[
\Pr\big[(\nea)\big]\le C\sum_{k=1}^m\binom{n}{k}\binom{n}{\lfloor dk\rfloor}n^{-2dk}\le C\cdot n\cdot n^k\cdot n^{dk}\cdot n^{-2dk}\le C n^{-1}.
\]
Hence, by the union bound, $\Pr[(\nea)\,\vee\,(\neb)]=o(1)$ and therefore~$G$ a.a.s.\ satisfies~(\ea) and~(\eb), \ie, $G$ is a.a.s.\ an $(n,d)$-expander.
\end{proof}

If the average degree~$p n$ (actually, $p(n-1)$) of the random graph~$G(n,p)$ is of order~$d\log n$, then $G(n,p)$ has expansion~$d$.
\begin{lemma}
\label{lem:gnp}
Let~$d\colon\mathbb{N}\to\mathbb{R}^+$ satisfy $d\ge 3$. Then $G(n,7d n^{-1}\log n)$ is a.a.s.\ an $(n,d)$-expander.
\end{lemma}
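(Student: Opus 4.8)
The plan is to derive this immediately from \lemref{lem:random}. I would set $p := 7dn^{-1}\log n$ and take $\mathcal{G}_n$ to be the distribution of the binomial random graph $G(n,p)$. (We may assume $p\le 1$; otherwise $G(n,p)$ is the complete graph $K_n$, and since $|N_{K_n}(X)|=n-|X|\ge d|X|$ for every $X$ with $1\le|X|<m(n,d)$, while all pairs of distinct vertices are adjacent, $K_n$ plainly satisfies both (\ea) and (\eb).) By \lemref{lem:random} applied with the constant $C:=1$, it then suffices to verify the tail bound \eqref{eq:random}, that is,
\[
\Pr\big[e_G(X,Y)=0\big]\le n^{-\frac{6d|X||Y|}{n}},
\]
for all disjoint $X,Y\subseteq V(G)$ in the two families singled out in \lemref{lem:random}.

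In fact this estimate holds for \emph{every} pair of disjoint sets, and it is a one-line consequence of independence. Since $X$ and $Y$ are disjoint, the $|X||Y|$ potential edges joining them appear independently, each with probability $p$, so
\[
\Pr\big[e_G(X,Y)=0\big]=(1-p)^{|X||Y|}\le\euler^{-p|X||Y|}=\euler^{-\frac{7d|X||Y|}{n}\log n}=n^{-\frac{7d|X||Y|}{n}}\le n^{-\frac{6d|X||Y|}{n}},
\]
using $1-p\le\euler^{-p}$ and then $d|X||Y|/n\ge 0$. This is precisely \eqref{eq:random} with $C=1$, so \lemref{lem:random} yields that $G(n,p)$ is a.a.s.\ an $(n,d)$-expander.

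There is essentially no obstacle here: the content has been packaged into \lemref{lem:random}, whose hypothesis was deliberately phrased as an upper bound on $\Pr[e_G(X,Y)=0]$ precisely so that the binomial case reduces to this short computation (and so that the random regular case in \lemref{lem:regular} reduces to a switching estimate in the same format). The only points requiring a moment's care are invoking independence correctly — hence the use of disjointness of $X$ and $Y$ — and dispatching the harmless boundary regime $p>1$.
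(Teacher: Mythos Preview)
Your proof is correct and follows essentially the same route as the paper: invoke \lemref{lem:random} and verify its hypothesis via the one-line computation $\Pr[e_G(X,Y)=0]=(1-p)^{|X||Y|}\le \euler^{-p|X||Y|}$ for disjoint $X,Y$. The paper's version is terser (omitting the explicit substitution and the $p>1$ aside), but the argument is the same.
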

Together with \thmref{thm:universality}, this lemma immediately implies \thmref{thm:gnp}.

\begin{proof}
This lemma is a direct consequence of \lemref{lem:random}, since
\[
\Pr\big[e_G(X,Y)=0\big]=(1-p)^{|X||Y|}\le\euler^{-p|X||Y|}
\]
holds for all disjoint vertex sets~$X$ and~$Y$ in a graph~$G$ drawn from $G(n,p)$.
\end{proof}

A second example of strong expanders are random regular graphs. Similar to the random graph~$G(n,p)$, a random regular $n$-vertex graph is a.a.s.\ an $(n,d)$-expander if its degree is of order $d\log n$. Note that our proof is restricted to random regular graphs with relatively large degree (at least of order $\sqrt{n}\log n$).
\begin{lemma}
\label{lem:regular}
Let the function $d\colon\mathbb{N}\to\mathbb{R}^+$ satisfy $d\ge \sqrt{n}\log n$. Let $r\colon\mathbb{N}\to\mathbb{N}$ satisfy that $r n$ is even and that $7d\log n\le r\ll n$. Then the random $r$-regular graph is a.a.s.\ an $(n,d)$-expander.
\end{lemma}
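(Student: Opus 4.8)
The proof starts from \lemref{lem:random}, applied to the uniform distribution over $r$-regular graphs on $\{1,\dots,n\}$; since $d\ge\sqrt n\log n\ge 3$ for $n$ large, it remains only to verify the tail estimate \eqref{eq:random}, i.e.\ $\Pr[e_G(X,Y)=0]\le C n^{-6d|X||Y|/n}$, in the two size regimes it requires: \emph{(i)} $|X|=|Y|=m$ with $m:=m(n,d)$, and \emph{(ii)} $1\le|X|<m$ with $|Y|=n-\lceil(d+1)|X|\rceil+1$. The hypothesis $d\ge\sqrt n\log n$ keeps $m=O(\sqrt n/\log n)$, so $X$ is small in both regimes, and $r\ge 7d\log n\ge 7\sqrt n\log^2 n$ makes $r$ much larger than $m$; these relations, together with the standing assumption $r\ll n$, are what keep the error terms in the switchings below of lower order.

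I would estimate $\Pr[e_G(X,Y)=0]$ by switching arguments carried out directly within the family of simple $r$-regular graphs on $\{1,\dots,n\}$. (The configuration model is of no help here: conditioning a uniform pairing on producing a simple graph costs a factor $\euler^{\Theta(r^2)}$, which for $r\ge 7\sqrt n\log^2 n$ already dwarfs the bound we are after.) In regime \emph{(i)} let $\mathcal C_j$ be the set of $r$-regular graphs with exactly $j$ edges between $X$ and $Y$; an edge-switching that removes one edge incident to $X$ and one incident to $Y$ and rewires the four endpoints so as to create a single new edge between $X$ and $Y$ gives $|\mathcal C_j|\le(1+o(1))\frac{(j+1)n}{|X||Y|r}|\mathcal C_{j+1}|$, so that, bounding $|\mathcal C_k|$ by the total number of $r$-regular graphs for $k:=\lceil r|X||Y|/n\rceil$ and using Stirling's formula,
\[
\Pr\big[e_G(X,Y)=0\big]\le\frac{|\mathcal C_0|}{|\mathcal C_k|}\le\prod_{j=0}^{k-1}(1+o(1))\frac{(j+1)n}{|X||Y|r}\le\euler^{-(1-o(1))r|X||Y|/n}.
\]
In regime \emph{(ii)} the event $e_G(X,Y)=0$ is much more restrictive — every edge incident to $X$ must land in the small set $S:=V(G)\setminus Y$, of size $s:=\lceil(d+1)|X|\rceil-1$ — and in fact no such graph exists at all unless $|X|>(r+1)/(d+1)$. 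For such $X$, I would switch edges incident to $X$ out of $S$ one at a time; telescoping the resulting ratio bound (the factorials cancel) gives a bound of the shape $\Pr[e_G(X,Y)=0]\le\big((1+o(1))\,s/n\big)^{(1-o(1))r|X|}$, with the range where $|X|$ is close to $m$ (so $s\approx n/2$ and this last estimate degenerates) instead falling back on the switching of regime~\emph{(i)}, whose two sets now have comparable sizes.

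These bounds beat $n^{-6d|X||Y|/n}$ for $n$ large, which is where the constant $7$ gets used: in regime \emph{(i)}, $r|X||Y|/n\ge\frac{7}{6}\cdot\frac{6d|X||Y|\log n}{n}$; in regime \emph{(ii)}, $\log(n/s)\ge\log 2$ because $s<(d+1)m\le n/2+o(n)$, and then $r|X|\log(n/s)\ge 6d|X||Y|\log n/n$ once $r\ge 7d\log n$. (In both regimes $r|X||Y|/n\to\infty$, since $d\ll n/\log n$, so the polynomial factors lost to Stirling are harmless.) With \eqref{eq:random} verified, \lemref{lem:random} gives that the random $r$-regular graph is a.a.s.\ an $(n,d)$-expander, which is \lemref{lem:regular}.

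I expect the genuinely delicate point to be making the switching counts uniform over \emph{all} $r$-regular graphs: a handful of vertices of atypically high codegree with $X$ or $Y$, or an atypical number of edges inside $X\cup Y$, can push the forward and backward counts for an individual graph away from their nominal values, and since the comparison with the target exponent is tight one really needs $(1+o(1))$ factors rather than merely bounded constants. The standard remedy applies — work with a subfamily of \emph{typical} $r$-regular graphs, bound the exceptional ones by an auxiliary switching, and perform the rewiring in blocks wherever a single step accumulates too much error — and it is exactly this bookkeeping that forces $X$ (hence $m$) to be small and $r$ to be $o(n)$; as the statement remarks, the restriction $r\ll n$ looks like an artifact of precisely this method.
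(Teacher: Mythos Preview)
Your framework matches the paper's exactly: reduce to \lemref{lem:random} and bound $\Pr[e_G(X,Y)=0]$ by a switching argument carried out directly inside the class of simple $r$-regular graphs (you are right that the configuration model is useless at this density). The difference is in the switching itself. The paper does not treat the two size regimes separately; following~\cite{KrSuVuWo01}, it uses a single \emph{three}-edge switching that covers both at once: from $G'\in\mathcal C_{j-1}$ one deletes edges $\{x,v\},\{y,v'\},\{w,w'\}$ (with $x\in X$, $y\in Y$) and inserts $\{x,y\},\{v,w\},\{v',w'\}$. Comparing the forward count ($\le jr^2n^2$) with the reverse count ($\ge(1-o(1))|X||Y|r^3n$) gives $|\mathcal C_j|\ge(1-o(1))\tfrac{k}{j}|\mathcal C_{j-1}|$ with $k=\lfloor 2r|X||Y|/n\rfloor$, and telescoping yields $\Pr[G\in\mathcal C_0]\le\euler^{-(1-o(1))k}\le n^{-6d|X||Y|/n}$ in both regimes simultaneously, with no case split and no separate treatment of $|X|$ near $m$ (when $|X|$ is so small that $\mathcal C_0=\emptyset$, the bound is vacuous).

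The three-edge switching is also what dissolves the difficulty you anticipate in your final paragraph. Your two-edge rewiring forces the new non-$XY$ edge $\{u,v\}$ to join two \emph{prescribed} vertices (a neighbour of $x$ and a neighbour of $y$), so the constraint $\{u,v\}\notin E$ is a codegree condition and the count really can vary from graph to graph --- hence your proposed detour through typical subfamilies and block switchings. In the paper's switching, by contrast, the new non-$XY$ edges are $\{v,w\}$ and $\{v',w'\}$ with $(w,w')$ a \emph{freely chosen} edge of $G'$: excluding $w\in N(v)$ or $w'\in N(v')$ costs only $O(r^2)$ out of the $rn$ ordered edges, which is $o(rn)$ because $r\ll n$. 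The only other graph-dependent quantities in the reverse count are controlled by $j$ itself (through $\sum_{x\in X}|N(x)\cap Y|=j-1$) or by $|X|<m=o(r)$, so the $(1-o(1))$ factors are worst-case over $\mathcal C_{j-1}$ and no typicality argument is needed. The hypotheses $d\ge\sqrt n\log n$ (so that $m=o(r)$) and $r\ll n$ enter exactly here, to make these exclusions lower-order.
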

Together with \thmref{thm:universality}, \lemref{lem:regular} implies \thmref{thm:regular}. To prove \lemref{lem:regular}, we follow the proof of Theorem~2.2 in~\cite{KrSuVuWo01} which uses a switching argument originally introduced in~\cite{McWo90}.  
\begin{proof}
Let~$n$ be a sufficiently large integer. Let~$V$ be a vertex set of size~$n$ and let~$X$ and~$Y$ be two sets such that~$|X|=|Y|=m(n,d)$ or~$1\le |X|< m(n,d)$ and $|Y|=n-(d+1)|X|$. By \lemref{lem:random}, it is sufficient to show that
\[
\Pr[e_G(X,Y)=0]\le n^{-\frac{6d|X||Y|}{n}}
\]
in order to prove the theorem.

Let $k:=\lfloor\frac{2r|X||Y|}{n}\rfloor$ and, for~$j\in\{0,\hdots,k\}$, let~$\mathcal{C}_j$ be the class of $n$-vertex $r$-regular graphs on~$V$ that have exactly~$j$ edges with end-vertices in~$X$ and~$Y$. 

First, let $j\in\{1,\hdots,k\}$ and let~$G\in\mathcal{C}_j$. We consider the following procedure. We choose an ordered vertex pair~$(x,y)\in X\times Y$ and two other ordered vertex pairs $(v,w),(v',w')\in V\times V$ such that~$\{x,y\}$, $\{v,w\}$, and~$\{v',w'\}$ are edges of~$G$. We delete these three edges from~$E(G)$ and then add each of the three pairs~$\{x,v\}$, $\{y,v'\}$, and~$\{w,w'\}$ to~$E(G)$, provided each of them forms a new edge in~$G$. If the resulting graph is in~$\mathcal{C}_{j-1}$, we call this procedure a \emph{forward switching}. The total number of forward switchings in~$G$ is at most
\[
j r^2n^2
\]
since there are exactly~$j$ choices for~$(x,y)$ and at most~$2|E(G)|=rn$ choices each for $(v,w)$ and $(v',w')$.

Next, also for $j\in\{1,\hdots,k\}$, let~$G\in\mathcal{C}_{j-1}$ and consider the following procedure. We choose three ordered vertex pairs~$(x,v)\in X\times V$, $(y,v')\in Y\times V$, and~$(w,w')\in V\times V$, such that~$\{x,v\}$, $\{y,v'\}$, and~$\{w,w'\}$ are edges of~$G$. We delete all three edges from~$E(G)$ and add each of the three pairs $\{x,y\}$, $\{v,w\}$, and~$\{v',w'\}$ to~$E(G)$, provided each of them forms a new edge in~$G$. If the new graph is in~$\mathcal{C}_j$, we call this procedure a \emph{reverse switching}.

There are $|X||Y|-j=(1-o(1))|X||Y|$ ways to choose vertices~$(x,y)\in X\times Y$ such that~$\{x,y\}\notin E(G)$. Then there are at least~$rn-j-2r=(1-o(1))rn$ ways to choose the pair~$(w,w')$, since we can choose (and count twice) any edge which does not run between~$X$ and~$Y$ and which is not incident to~$x$ or~$y$. Finally, there are at least $(r-3)^2-2j=(1-o(1))r^2$ ways to choose the vertices~$v$ and~$v'$, since we can choose any combination of a neighbor of~$x$ and a neighbor of~$y$ except for~$w$, $w'$, and $v$ or~$v'$, respectively, and except for the~$j$ edges between~$X$ and~$Y$. Therefore, for sufficiently large~$n$, the total number of reverse switchings in~$G$ is at least
\[
(1-o(1))|X||Y|r^3n
\]

Every pair of a forward switching and a graph in~$\mathcal{C}_j$ can be identified with the corresponding pair of a backward switching and a graph in~$\mathcal{C}_{j-1}$. Thus,
\[
\frac{j r^2n^2}{4}|\mathcal{C}_j|\ge(1-o(1))\frac{|X||Y|r^3n}{2}|\mathcal{C}_{j-1}|.
\]
Hence,
\[
|\mathcal{C}_j|\ge(1-o(1))\frac{k}{j} |\mathcal{C}_{j-1}|
\]
and therefore
\[
|\mathcal{C}_k|\ge(1-o(1))^k\frac{k^k}{k!}|\mathcal{C}_0|\ge\euler^{(1-o(1))k}|\mathcal{C}_0|
\]

Thus, for sufficiently large~$n$,
\[
\Pr[e_G(X,Y)=0]=\Pr[G\in\mathcal{C}_0]\le\euler^{-(1-o(1))k}\le n^{-\frac{6d|X||Y|}{n}}
\]
which concludes the proof of \thmref{lem:regular}.
\end{proof}

\section{Locally Sparse Expanders}
\label{sec:sparse}

So far, the only examples of $(n,d)$-expanders that we considered were the Binomial random graph and random regular graphs. Note that even if $d \in O(1)$, then $G(n,p)$ becomes an $(n,d)$-expander only when $p \gg \log n/n$. But when the edge probability is so high, then a.a.s.~$G(n,p)$ also contains a triangle. Nevertheless, in this subsection we will see that there exist $(n,d)$-expanders with almost logarithmic girth. Moreover, we will show that even for~$d$ slightly smaller than~$\sqrt{n}$, there still exists $(n,d)$-expanders which are triangle-free, while a random graph with the same expansion already contains a copy of~$K_5$.

\begin{definition}[$(k,\ell)$-locally sparse]
We call a graph \emph{$(k,\ell)$-locally sparse} if all of its induced $k$-vertex subgraphs have at most~$\ell$ edges.
\end{definition}

For example, a $(r+1,\binom{r+1}{2})$-locally sparse graph is~$K_{r+1}$-free, \ie, has clique number at most~$r$, and a connected $(r+1,r+1)$-locally sparse graph does not contain a cycle of length at most~$r+1$, \ie, has girth at least~$r+2$. Note that, for~$r=1$, the two previous observations coincide. 
\begin{lemma}
\label{lem:sparse}
There exists an absolute constant~$c\in\mathbb{R}^+$ such that the following statement holds. Let~$n,k,\ell\in\mathbb{N}$ with~$\ell\ge 2$ and let~$d:=c n^{1-(k-2)/(\ell-1)}\log^{-1}n$ satisfy~$d\ge 3$. Then there exists an $(n,d)$-expander~$H$ that is $(k,\ell)$-sparse. Moreover, $H$ satisfies
\begin{equation}
\label{eq:sparse}
e_H(X,Y)\ge\frac{48d|X||Y|\log n}{n}
\end{equation}
for all (not necessarily disjoint) sets $X,Y\subseteq V(H)$ which satisfy~$1\le |X|< m(n,d)$ and $|Y|=n-\lceil(d+1)|X|\rceil+1$ or satisfy~$|X|=|Y|=m(n,d)$.
\end{lemma}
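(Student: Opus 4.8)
\medskip
\noindent\textbf{Proof proposal.}
I would build~$H$ by deleting a carefully chosen sparse set of edges from a binomial random graph. Fix a large absolute constant~$C$, then a small absolute constant~$c$ (depending only on~$C$), and let $G\sim G(n,p)$ with $p:=Cd\log n/n$; by the choice of~$d$ this means $p=Cc\,n^{-(k-2)/(\ell-1)}$. First, as in \lemref{lem:random} and \lemref{lem:gnp}, $G$ a.a.s.\ satisfies a strengthened form of~\eqref{eq:sparse}: for either admissible shape of~$(X,Y)$, $\Pr[e_G(X,Y)<\tfrac12 p|X||Y|]\le\euler^{-p|X||Y|/8}$ by Chernoff, and since $p|X||Y|$ beats $(|X|+|Y|)\log n$ in both cases, a union bound over the at most $\binom n{|X|}\binom n{|Y|}$ pairs closes for~$C$ large; thus a.a.s.\ $e_G(X,Y)\ge\tfrac12 p|X||Y|$ on both shapes, which for $C\ge 192$ is at least $96\,d|X||Y|\log n/n$. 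It is moreover enough to secure~\eqref{eq:sparse} for the final graph~$H$: if $e_H(X,Y)\ge 48\,d|X||Y|\log n/n$ on both shapes, then for $1\le|X|<m(n,d)$ a set $Y\subseteq V(H)\setminus(X\cup N_H(X))$ of the prescribed size~$n-\lceil(d+1)|X|\rceil+1$ exists and has $e_H(X,Y)=0$, forcing $|N_H(X)|\ge d|X|$, while the shape $|X|=|Y|=m(n,d)$ gives $e_H(X,Y)>0$; so $H$ is then an $(n,d)$-expander by \defref{def:expander}.

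Next I would delete the dense spots. Call $F\subseteq G$ a \emph{witness} if $|V(F)|\le k$ and $F$ has at least~$\ell$ edges, so that~$G$ is $(k,\ell)$-locally sparse precisely when it has no witness; repeatedly delete one edge of a surviving witness, let~$D$ be the removed set, and put $H:=G-D$. Deleting edges never increases edge counts inside vertex sets, so $H$ is $(k,\ell)$-locally sparse. The point of the calibration of~$d$ is that~$D$ is not merely sparse but spread out. By the first moment, the expected number of witnesses is at most $\sum_F n^{|V(F)|}p^{|E(F)|}$ over isomorphism types of graphs~$F$ on at most~$k$ vertices, exactly~$\ell$ edges, and no isolated vertex (adding edges or vertices only decreases $n^{|V|}p^{|E|}$), and the dominant term — $F$ on exactly~$k$ vertices — contributes $n^kp^\ell$. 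Substituting $p=Cc\,n^{-(k-2)/(\ell-1)}$, the exponent $k-\ell(k-2)/(\ell-1)=(2\ell-k)/(\ell-1)$ is exactly that of $\binom n2 p$, so $n^{k-2}p^{\ell-1}=(Cc)^{\ell-1}$ and hence $\EXP[|D|]\le O_{k,\ell}(1)\,(Cc)^{\ell-1}\binom n2 p$; the same count of witness-edges crossing a pair gives $\EXP[e_D(X,Y)]\le O_{k,\ell}(1)\,(Cc)^{\ell-1}p|X||Y|$ for every~$(X,Y)$, and choosing~$c$ small makes this constant $\le\tfrac1{16}$ (the bounds on~$k,\ell$ implicit in $d\ge 3$ keep $O_{k,\ell}(1)$ in check). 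Chernoff plus a union bound over the $\binom n{|X|}\binom n{|Y|}$ pairs of each admissible shape — again using that $p|X||Y|$ beats $(|X|+|Y|)\log n$ — then gives $e_D(X,Y)\le\tfrac18 p|X||Y|$ a.a.s.\ on both shapes.

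On the intersection of these a.a.s.\ events, every admissible pair satisfies
$e_H(X,Y)=e_G(X,Y)-2e_D(X,Y)\ge\tfrac12 p|X||Y|-\tfrac14 p|X||Y|=\tfrac14 p|X||Y|\ge 48\,d|X||Y|\log n/n$
for $C\ge 192$, so~\eqref{eq:sparse} holds for~$H$, and by the first paragraph~$H$ is the required $(k,\ell)$-locally sparse $(n,d)$-expander; any realization of the good event works.

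\smallskip
\noindent\emph{Main obstacle.} The heart is the two balance computations: the exponent $1-(k-2)/(\ell-1)$ is chosen precisely so that $\EXP[|D|]$ and, for every large pair, $\EXP[e_D(X,Y)]$ are comparable to a \emph{small} constant times the number of edges and $p|X||Y|$ respectively — which is what lets the deletions be absorbed by the slack in~\eqref{eq:sparse} while the expansion drops by only a constant factor. Making the implicit $O_{k,\ell}(1)$ factors genuinely harmless through the choice of the absolute constant~$c$, and disposing of the non-disjoint pairs and the degenerate shape $|X|=1$ in~\eqref{eq:sparse}, are the remaining points that need care.
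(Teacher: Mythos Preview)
Your overall plan---sample $G(n,p)$ with $p=\Theta(d\log n/n)$, delete edges to kill all witnesses, and show that enough edges survive between every admissible pair---is exactly the paper's approach. The serious gap is the sentence ``Chernoff plus a union bound \ldots\ then gives $e_D(X,Y)\le\tfrac18 p|X||Y|$ a.a.s.'' The quantity $e_D(X,Y)$ (or the count of witnesses crossing $(X,Y)$ that bounds it) is \emph{not} a sum of independent indicators: whether a given potential witness is present in $G$ is highly correlated with whether other overlapping witnesses are present, and the greedy deletion only makes the dependence worse. A small expectation is not enough; you need an exponential tail in $p|X||Y|$ to beat the $\binom{n}{|X|}\binom{n}{|Y|}$ union bound, and Chernoff simply does not apply here.

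The paper's fix is precisely what your deletion scheme lacks: instead of deleting one edge per surviving witness, it fixes a \emph{maximal edge-disjoint} family $\mathcal{F}(G)$ of $(k,\ell)$-subgraphs and removes all of their edges. Maximality still forces $H$ to be $(k,\ell)$-sparse, and edge-disjointness is what buys concentration: if $t$ members of $\mathcal{F}(G)$ each cross $(X,Y)$, then these $t$ copies together span $\ell t$ distinct edges of $G$, so
\[
\Pr\big[|\mathcal{F}(X,Y)|\ge t\big]\le \binom{|X||Y|}{t}\,n^{(k-2)t}\,p^{\ell t},
\]
which is a genuine exponential tail (no Chernoff needed). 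With $t=\Theta(p|X||Y|/\ell)$ this becomes $\le e^{-\Omega(p|X||Y|)}$ and the union bound closes. A smaller but related issue is your ``$O_{k,\ell}(1)$ is in check'': since $c$ must be an absolute constant while $k,\ell$ are parameters of the lemma, this dependence has to be absorbed explicitly; the paper does it by taking $c$ small enough that $p\le e^{-7}n^{-(k-2)/(\ell-1)}$, which yields an $e^{-7(\ell-1)}\ge 15e\ell\cdot e^{\ell}$ factor that cancels the $\ell$-dependence in the tail estimate.
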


If we apply \lemref{lem:sparse} with~$k=r+1$ and~$\ell=\binom{r+1}{2}$, then the resulting graph has clique number at most~$r$. This, together with \thmref{thm:universality}, implies \thmref{thm:sparse}. Note that if we were able to improve the bound on~$d$ in \thmref{thm:universality} to $c n^{1/r}\log^{-1} n$, then \lemref{lem:sparse} with~$k=r+1$ and~$\ell=r+1$ would immediately give the existence of tree-universal graphs with girth at least~$r+2$.

Before proving \lemref{lem:sparse}, we first state the well-known Chernoff's inequality (see, e.g.,~\cite[Theorem~2.3]{JaLuRu00}) which we use to bound the tail probabilities of the binomial distribution~$\Bin(n,p)$. Recall that, for a random variable~$X\sim\Bin(n,p)$, we have $\Pr[X=k]=\binom{n}{k}p^k(1-p)^{n-k}$ for all~$0\le k\le n$ and~$\EXP[X]=p n$.
\begin{theorem}[Chernoff's inequality]
\label{thm:bin}
Let~$\eps$ be a positive constant satisfying~$\eps\le 3/2$ and let~$X\sim\Bin(n,p)$. Then
\[
\Pr\big[|X-\EXP[X]|>\eps\big]\le\euler^{-\frac{\eps^2}{3} \EXP[X]}.
\]
\end{theorem}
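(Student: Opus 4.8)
The plan is to establish the standard relative-deviation form of Chernoff's inequality, i.e.\ $\Pr[|X-\EXP[X]|>\eps\EXP[X]]\le 2\,\euler^{-\eps^{2}\EXP[X]/3}$, by the exponential-moment (Bernstein) method. Concretely, I would prove the two one-sided estimates
\[
\Pr[X\ge(1+\eps)\mu]\le\euler^{-\eps^{2}\mu/3}\qquad\text{and}\qquad\Pr[X\le(1-\eps)\mu]\le\euler^{-\eps^{2}\mu/3}\quad(0\le\eps\le\tfrac32),
\]
where $\mu:=\EXP[X]=np$; the two-sided bound is then a union bound over the two tails, and the resulting factor~$2$ is immaterial since every invocation of the inequality in this paper uses only one of the two tails. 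Write $X=X_{1}+\dots+X_{n}$ as a sum of independent $\Bin(1,p)$ variables, so that $\EXP[\euler^{tX}]=\prod_{i}\EXP[\euler^{tX_{i}}]=(1+p(\euler^{t}-1))^{n}\le\euler^{\mu(\euler^{t}-1)}$, using $1+x\le\euler^{x}$.

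For the upper tail and any $t>0$, Markov's inequality applied to $\euler^{tX}$ gives $\Pr[X\ge(1+\eps)\mu]\le\euler^{-t(1+\eps)\mu}\EXP[\euler^{tX}]\le\euler^{-t(1+\eps)\mu+\mu(\euler^{t}-1)}$; minimising the exponent at $t=\log(1+\eps)$ yields $\Pr[X\ge(1+\eps)\mu]\le\big(\euler^{\eps}(1+\eps)^{-(1+\eps)}\big)^{\mu}$. For the lower tail and any $s>0$, the same reasoning applied to $\euler^{-sX}$ gives $\Pr[X\le(1-\eps)\mu]\le\euler^{s(1-\eps)\mu+\mu(\euler^{-s}-1)}$, and for $\eps\in(0,1)$ the choice $s=-\log(1-\eps)$ yields $\Pr[X\le(1-\eps)\mu]\le\big(\euler^{-\eps}(1-\eps)^{-(1-\eps)}\big)^{\mu}$. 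When $\eps\ge 1$ the event $\{X\le(1-\eps)\mu\}$ is either empty or equals $\{X=0\}$, and $\Pr[X=0]=(1-p)^{n}\le\euler^{-\mu}\le\euler^{-\eps^{2}\mu/3}$ because $\eps^{2}/3\le\tfrac34<1$, so the lower-tail bound holds trivially there.

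It then remains to prove the two elementary inequalities $\euler^{\eps}(1+\eps)^{-(1+\eps)}\le\euler^{-\eps^{2}/3}$ for $0\le\eps\le\tfrac32$ and $\euler^{-\eps}(1-\eps)^{-(1-\eps)}\le\euler^{-\eps^{2}/2}\le\euler^{-\eps^{2}/3}$ for $0\le\eps<1$; raising these to the power $\mu\ge0$ gives exactly the two displayed tail bounds. I would take logarithms and study $h(\eps):=\eps-(1+\eps)\log(1+\eps)+\eps^{2}/3$ and $k(\eps):=-\eps-(1-\eps)\log(1-\eps)+\eps^{2}/2$, both of which vanish at $\eps=0$. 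For $k$ one has $k'(\eps)=\eps+\log(1-\eps)\le 0$ on $(0,1)$, so $k\le 0$ there. For $h$ one has $h'(\eps)=\tfrac{2\eps}{3}-\log(1+\eps)$, which is \emph{not} of one sign on $[0,\tfrac32]$: it is negative just past $0$, attains a minimum, and returns to $0$ at a point inside $(1,\tfrac32)$. Hence $h$ decreases from $0$ into negative values and then increases, and one concludes $h\le 0$ on all of $[0,\tfrac32]$ by checking the right endpoint, $h(\tfrac32)=\tfrac94-\tfrac52\log\tfrac52<0$.

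The only point that will require real care is this last inequality $\euler^{\eps}(1+\eps)^{-(1+\eps)}\le\euler^{-\eps^{2}/3}$ over the \emph{full} range $\eps\in[0,\tfrac32]$: because $h'$ changes sign, the crude ``derivative is non-positive'' argument does not apply, and one must verify that the unique interior critical point of $h$ is a minimum and that $h$ stays non-positive at $\eps=\tfrac32$. All remaining ingredients --- the moment-generating-function bound, the two one-parameter optimisations, and the degenerate $\eps\ge1$ case of the lower tail --- are entirely routine.
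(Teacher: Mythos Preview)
The paper does not prove this statement; it is quoted as a well-known result with a reference to Janson, {\L}uczak, and Ruci\'nski's \emph{Random Graphs} (their Theorem~2.1/Corollary~2.3 for the binomial case). So there is no proof in the paper to compare your argument against.

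Your derivation is correct and is exactly the standard exponential-moment proof one finds in that reference: bound the moment generating function by $\exp(\mu(\euler^{t}-1))$, optimise in~$t$ to obtain the Chernoff--Okamoto bounds $(\euler^{\pm\eps}(1\pm\eps)^{-(1\pm\eps)})^{\mu}$, and then reduce to the two calculus inequalities you wrote down. Your treatment of the upper-tail inequality $h(\eps)=\eps-(1+\eps)\log(1+\eps)+\eps^{2}/3\le 0$ on $[0,\tfrac32]$ is the right one: $h'$ does change sign, so checking the endpoint $h(\tfrac32)=\tfrac94-\tfrac52\log\tfrac52<0$ is indeed necessary and sufficient. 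The lower-tail case and the degenerate $\eps\ge 1$ subcase are handled cleanly.

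Two minor remarks. First, you correctly silently repaired the typo in the displayed statement: the event should be $\{|X-\EXP[X]|>\eps\,\EXP[X]\}$, not $\{|X-\EXP[X]|>\eps\}$. Second, the union bound over the two tails produces a factor of~$2$ that the paper's statement omits; the cited source does carry this factor, and as you observe it is immaterial for the single application in Lemma~\ref{lem:sparse}, which uses only the lower tail.
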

We are now ready to prove \lemref{lem:sparse}. The main idea of this proof originates in~\cite{Kr95}.
\begin{proof}[Proof of \lemref{lem:sparse}]
In order to prove the lemma, we will show that the random graph~$G(n,p)$ with $p:=144d n^{-1}\log n$ is a.a.s.\ an $(n,d)$-expander, even if we remove all edges of some maximal family of edge-disjoint subgraphs on $k$ vertices and~$\ell$ edges.

Let~$c\in\mathbb{R}^+$ be small enough such that~$p\le\euler{}^{-7}n^{-(k-2)/(\ell-1)}$ and assume that~$n$ is sufficiently large. Furthermore, let~$\mathcal{F}$ be the class of all graphs with~$k$ vertices and~$\ell$ edges and, for every $n$-vertex graph~$G$, let~$\mathcal{F}(G)$ be an arbitrary (but fixed) family of edge-disjoint copies of graphs in~$\mathcal{F}$ in~$G$.

Now, let~$G$ be a random graph drawn according to~$G(n,p)$ and let~$H$ be the random graph resulting from~$G$ by removing all edges covered by~$\mathcal{F}(G)$. Because the family~$\mathcal{F}(G)$ is maximal, $H$ is $(k,\ell)$-sparse.

For all (not necessarily disjoint) vertex sets~$X$ and~$Y$ in~$H$ which satisfy~$1\le |X|< m(n,d)$ and $|Y|=n-\lceil(d+1)|X|\rceil+1$ or which satisfy~$|X|=|Y|=m(n,d)$, we are going to show that a.a.s.~(\ref{eq:sparse}) in \lemref{lem:sparse} holds. Clearly, this implies that~$H$ is an $(n,d)$-expander and that there indeed exists a graph which satisfies the properties stated in \lemref{lem:sparse}.

Let $X,Y\subseteq V(H)$ be two sets which satisfy~$1\le |X|< m(n,d)$ and $|Y|=n-\lceil(d+1)|X|\rceil+1$ or which satisfy~$|X|=|Y|=m(n,d)$. Then, by Chernoff's inequality (\thmref{thm:bin}), we have
\[
\Pr\Big[e_G(X,Y)\le\frac{p|X|(|Y|-1)}{2}\Big]\le \euler^{-\frac{p|X|(|Y|-1)}{12}}.
\]
Note that the~$-1$ in the product~$|X|(|Y|-1)$ is due to the fact that~$X$ and~$Y$ may intersect and~$G$ is simple, \ie, has no loops. Since~$|Y|\ge 5$ and thus $5(|Y|-1)\ge 4|Y|$ for sufficiently large~$n$, this implies
\begin{equation}
\label{eq:chernoffpart}
\Pr\Big[e_G(X,Y)\le\frac{2p|X||Y|)}{5}\Big]\le \euler^{-\frac{p|X||Y|}{15}}.
\end{equation}

Next, let~$\mathcal{F}(X,Y)$ be the family of sets in $\mathcal{F}(G)$ which contain at least one edge between~$X$ and~$Y$. For every~$t\in\mathbb{N}$, there are at most~$\binom{|X||Y|}{t}n^{(k-2)t}$ candidates for a collection of~$t$ edge-disjoint copies in~$G$ of graphs in~$\mathcal{F}(G)$ which cover at least one edge between~$X$ and~$Y$: $\binom{|X||Y|}{t}$ choices for the $t$ different (potential) edges between~$X$ and~$Y$ and at most~$n^{(k-2)t}$ choices for the remaining~$k-2$ vertices of each of the copies. Since the~$t$ copies are edge-disjoint and all graphs in~$\mathcal{F}(G)$ have~$\ell$ edges, each candidate collection occurs with probability
\[
p^{\ell t}=p^t p^{(\ell-1)t}\le\Big(\frac{p}{\euler^{7(\ell-1)}}\Big)^t n^{-(k-2)t}
\]
since $p\le\euler{}^{-7}n^{-(k-2)/(\ell-1)}$. Now, for~$\ell\ge 2$ we have
\[
\euler^{7(\ell-1)}=\euler^{6\ell-7}\euler{}^{\ell}\ge 15\euler\ell\euler{}^{\ell}
\]
and therefore
\[
p^{\ell t}\le\Big(\frac{p}{15\euler\ell}\Big)^t\euler^{-\ell t}n^{-(k-2)t}.
\]
Thus, by the union bound,
\[
\Pr\Big[|\mathcal{F}(X,Y)|\ge t\Big] \le\binom{|X||Y|}{t}n^{(k-2)t}\Big(\frac{p}{15\euler\ell}\Big)^t\euler^{-\ell t}n^{-(k-2)t}\le\Big(\frac{p|X||Y|}{15\ell t}\Big)^t\euler^{-\ell t}
\]
and, for~$t=\frac{p|X||Y|}{15\ell}$, this implies that
\begin{equation}
\label{eq:deletionpart}
\Pr\Big[ |\mathcal{F}(X,Y)|\ge\frac{p|X||Y|}{15\ell}\Big]\le \euler^{-\frac{p|X||Y|}{15}}
\end{equation}
Observe that by deleting the edges contained in a graph from $\mathcal{F}(X,Y)$ we can reduce $e_G(X,Y)$ by at most $\ell$. Thus, by again applying the union bound to~(\ref{eq:chernoffpart}) and~(\ref{eq:deletionpart}), we get
\[
\Pr\Big[e_H(X,Y)\le\frac{p|X||Y|}{3}\Big]\le\Pr\Big[e_G(X,Y)\le\frac{2p|X||Y|}{5}\;\vee\;|\mathcal{F}(X,Y)|\ge\frac{p|X||Y|}{15\ell}\Big]\le 2\euler^{-\frac{p|X||Y|}{15}}.
\]
By substituting~$p=144d n^{-1}\log n$, we obtain
\[
\Pr\Big[e_H(X,Y)<\frac{48d|X||Y|\log n}{n}\Big]\le 2n^{-\frac{48d|X||Y|}{5n}}.
\]
Now, since $48/5\ge 6$, \lemref{lem:sparse} follows from the same union-bound argument over all sets~$X$ and~$Y$ as in \lemref{lem:random}.
\end{proof}

Using \lemref{lem:sparse}, we can now prove \lemref{lem:radius} which was presented in the introduction.
\begin{proof}[Proof of \lemref{lem:radius}]
Let~$c$ be any positive constant that is smaller than that in \lemref{lem:sparse} and which satisfies~$c\le 1/60$. We assume that $n$ is sufficiently large. Let~$d:=c n^{1/r}\log^{-1}n$ and note that~$d\ge 3$ by the choice of~$r$. 

For~$r=1$, let~$H$ be the complete graph~$K_n$ and for~$r\ge 2$ let~$H$ be the graph from \lemref{lem:sparse} with~$k=\ell=r+1$. In both cases~$H$ has girth~$r+2$ (note that the graphs constructed in \lemref{lem:sparse} are expanders and thus they are always connected). Let~$G$ be a random graph drawn uniformly at random from~$\mathcal{G}_H$. Thus, independently for every~$\{v,w\}\in E(H)$, the graph~$G$ contains either the two edges~$\{u_v,u_w\}$ and~$\{u'_v,u'_w\}$ or the two edges~$\{u_v,u'_w\}$ and~$\{u_v,u'_w\}$, where each choice has probability~$1/2$.

Our first observation is that the radius of~$G$ is at least as large as the girth of~$H$, \ie, at least~$r+2$, since each path between two vertices~$u_v$ and~$u'_v$ in~$G$ corresponds to a non-trivial closed walk in~$H$. Next, we show that~$G$ is a.a.s.\ a $(2n,d)$-expander.

Next, let~$X$ and~$Y$ be two disjoint sets in~$V(G)$ which satisfy~$1\le |X|< m(2n,d)$ and $|Y|=n-\lceil(d+1)|X|\rceil+1$ or which satisfy~$|X|=|Y|=m(2n,d)$. We need to show that
\[
\Pr[e_G(X,Y)=0]\le n^{-\frac{6d|X||Y|}{n}}.
\]
Let $A:=\{v\in V(H) \mid u_v\in X\,\vee\,u'_v\in X\}$ and $B:=\{v\in V(H) \mid u_v\in Y\,\vee\,u'_v\in Y\}$ be the projections of~$X$ and~$Y$ to~$V(H)$. Note that~$A$ and~$B$ may intersect. Clearly, we have $|A|\ge|X|/2$ and~$|B|\ge|Y|/2$. Consider all ordered pairs~$(v,w)\in A \times B$. For each of them mark an arbitrary pair from $\{u_v,u'_v\}\times\{u_w,u'_w\}$ in~$X\times Y$. Then there exist exactly~$e_H(A,B)$ marked pairs, each of which forms an edge in~$G$ between~$X$ and~$Y$ independently with probability~$1/2$. Thus,
\begin{equation}
\label{eq:radius}
\Pr[e_G(X,Y)=0]\le 2^{-e_H(A,B)}\le e^{-\frac{e_H(A,B)}{2}}.
\end{equation}

For the case~$r=1$, we have $H=K_n$. Then, since~$e_H(A,B)$ counts edges between ordered pairs and since~$A$ and~$B$ may intersect, we have for sufficiently large~$Y$ that
\[
e_H(A,B)=|A||B|-|A|=\frac{|X||Y|-2|X|}{4}\ge\frac{|X||Y|}{5}
\]
and therefore, since~$d\le\frac{1}{60}n\log^{-1}n$, we have
\[
\Pr[e_G(X,Y)=0]\le e^{-\frac{|X||Y|}{10}}\le n^{-\frac{6d|X||Y|}{n}}.
\]

For the case~$r\ge 2$, $H$ is the graph provided by \lemref{lem:sparse}. Since~$H$ satisfies~(\ref{eq:sparse}), \ie,
\[
e_H(A,B)\ge\frac{48d|A||B|\log n}{n}
\]
we get, by inequality~(\ref{eq:radius}) and by ~$|A|\ge|X|/2$ and~$|B|\ge|Y|/2$, that
\[
\Pr\Big[e_G(X,Y)=0\Big]\le n^{-\frac{6d|X||Y|}{n}}.
\]

 In both cases, we have 
\[
\Pr\Big[e_G(X,Y)=0\Big]\le n^{-\frac{6d|X||Y|}{n}}\le(2n)^{-\frac{6d|X||Y|}{2n}}
\]
and the \lemref{lem:radius} follows from \lemref{lem:random}.
\end{proof}

\section{The Maker-Breaker Expander Game}
\label{sec:makerbreaker}
This section is devoted to the proof of \thmref{thm:mbuniversal}, which we presented in the introduction. To this end, we first formulate the \emph{Maker-Breaker Expander Game}, in which Maker tries to claim a subset of the edges that induces an $(n,d)$-expander.
\begin{definition}[Maker-Breaker Expander Game]
Let~$n\in\mathbb{N}$, let $d\in\mathbb{R}^+$, and let~$G$ be a graph. Then the \emph{Maker-Breaker $(n,d)$-expander game on~$G$} is the Maker-Breaker game on the hypergraph $(E(G),\mathcal{F})$, where $\mathcal{F}$ consists of all edge sets~$F\subseteq E(G)$ such that the subgraph $(V(G),F)$ is an $(n,d)$-expander.
\end{definition}
If the $(1\,{:}\,b)$ Maker-Breaker expander game is played on an $(n,15bd\log n)$-expander, then Maker can always secure the edges of an $(n,d)$-expander.
\begin{theorem}
\label{thm:makerbreaker}
There exists an absolute constant~$n_0\in\mathbb{N}$ such that the following statement holds. Let~$n,b\in\mathbb{N}$ and~$d\in\mathbb{R}^+$ satisfy~$n\ge n_0$ and~$d\ge 3$. Then the $(1\,{:}\,b)$ Maker-Breaker $(n,d)$-expander game is Maker's win on every $(n,15 b d\log n)$-expander.
\end{theorem}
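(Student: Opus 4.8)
The plan is to recast the game as a hitting (transversal) game and then apply a biased Erd\H{o}s--Selfridge-type potential argument. Fix an $(n,D)$-expander $G$ with $D:=15bd\log n$ and set $m:=m(n,d)$. Since both (\ea) and (\eb) are preserved by adding edges, Maker wins the $(n,d)$-expander game on $G$ precisely when the set $M$ of edges he ends up owning is itself an $(n,d)$-expander, and this fails exactly when $M$ is disjoint from (equivalently, is entirely owned by Breaker) one of the following ``dangerous'' edge-sets of $G$. A \emph{Type-A} dangerous set is $E_G(X,T)$ for some $X$ with $1\le|X|<m$ and some $T\subseteq N_G(X)$ with $|T|=|N_G(X)|-\lceil d|X|\rceil+1$; if $M$ avoids such a set then $N_M(X)\subseteq N_G(X)\setminus T$, so $|N_M(X)|<d|X|$ and (\ea) fails. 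A \emph{Type-B} dangerous set is $E_G(X,Y)$ for disjoint $X,Y$ with $|X|=|Y|=m$; $M$ avoiding it is exactly a failure of (\eb). Hence Maker wins if and only if he claims at least one edge from every dangerous set, i.e.\ he prevents Breaker from ever fully claiming one.

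By the biased Erd\H{o}s--Selfridge criterion, a player with bias $1$ (moving first) against a player with bias $b$ can guarantee to claim at least one element of every set in a family $\mathcal{H}$ provided $\sum_{A\in\mathcal{H}}2^{-|A|/b}<\tfrac12$, so it suffices to check that the total potential of all dangerous sets is this small. This is where the strong expansion of $G$ enters. For Type-A sets with $|X|=j<m(n,D)$, property (\ea) for $G$ gives $|N_G(X)|\ge Dj$, so $|E_G(X,T)|\ge|T|\ge(D-d)j\ge\tfrac{14}{15}Dj$, which is $\Omega(bdj\log n)$; since there are at most $\binom nj\binom n{\lceil dj\rceil}\le n^{(1+d)j+1}$ of them, their combined potential is $\sum_{j\ge1}n^{-\Omega(dj)}=o(1)$ (recall $\log=\ln$ and $d\ge3$). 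For $|X|=j$ in the remaining range $m(n,D)\le j<m$ one uses (\eb) for $G$ to get $|N_G(X)|\ge n-j-m(n,D)$ and then \lemref{lem:density}, which again yields $|E_G(X,T)|=\Omega(bdj\log n)$, enough to absorb the same binomial count. Finally, for Type-B sets, (\eb) for $G$ together with \lemref{lem:density} gives $|E_G(X,Y)|\ge\frac{|X||Y|}{4\,m(n,D)}=\Omega(Dm^2/n)=\Omega(bn\log n/d)$, and since there are at most $\binom nm^2\le(en/m)^{2m}$ of them with $m=\Theta(n/d)$, their combined potential is $o(1)$ as well. Summing the three contributions keeps the total below $\tfrac12$, and the criterion applies.

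The \textbf{main obstacle} is exactly this potential estimate for the \emph{large} dangerous sets --- the Type-B sets, and the Type-A sets with $|X|$ of order $n/d$. For these the only available lower bound on the number of edges is the one from \lemref{lem:density}, of order $bn\log n/d$ (resp.\ $bd|X|\log n$), and it must be weighed against binomial coefficients such as $\binom nm^2$ or $\binom n{\lceil d|X|\rceil}$; making this inequality close is precisely what forces the extra logarithmic factor, i.e.\ the choice $D=15bd\log n$ (the ``clean'' small-set Type-A part, by contrast, has an enormous surplus). A minor technical point is to invoke Erd\H{o}s--Selfridge in the correct form: here the hitting player has the smaller bias \emph{and} moves first, but moving first only helps the opponent, so the standard ``Maker-moves-first'' statement is safely on the conservative side. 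Once \thmref{thm:makerbreaker} is established, \thmref{thm:mbuniversal} follows at once: apply it with $d:=7\Delta n^{2/3}$, so that (by \lemref{lem:monotone}) an $(n,cb\Delta n^{2/3}\log n)$-expander is an $(n,15bd\log n)$-expander, and then invoke \thmref{thm:universality} to see that the $(n,7\Delta n^{2/3})$-expander Maker secures is $\mathcal{T}(n,\Delta)$-universal.
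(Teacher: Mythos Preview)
Your proposal is correct and follows essentially the same route as the paper's proof: both recast the game as a transversal game and apply Beck's biased Erd\H{o}s--Selfridge criterion with the threshold $\sum_F 2^{-|F|/b}<\tfrac12$, bounding the potential via property~(\ea) for small~$X$ and via \lemref{lem:density} for the larger sets. The only cosmetic difference is the parametrisation of the Type-A dangerous sets --- the paper takes $Y$ to be a large ``complementary'' set of size $n-\lceil(d+1)|X|\rceil+1$, whereas you take $T\subseteq N_G(X)$ of size $|N_G(X)|-\lceil d|X|\rceil+1$ --- but the resulting edge-count lower bounds and the binomial enumeration are the same, and your remark on the move order matches the paper's observation that role reversal only strengthens the conclusion.
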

\thmref{thm:makerbreaker}, together with \thmref{thm:universality}, implies \thmref{thm:mbuniversal}. The key to the proof of this theorem is Beck's generalization of the Erd\H{o}s-Selfridge criterion for Breaker's win~\cite{ErSe73,Be82}.
\begin{theorem}
\label{thm:beck}
Let~$a,b\in\mathbb{N}$. Let~$(X,\mathcal{F})$ be a finite hypergraph. Suppose that
\[
\sum_{F\in \mathcal{F}}(1+b)^{-|F|/a}<\frac{1}{1+b}.
\]
Then the $(a:b)$ Maker-Breaker game on~$(X,\mathcal{F})$ is Breaker's win.
\end{theorem}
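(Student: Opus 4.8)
The plan is to prove this --- it is Beck's biased form of the Erd\H{o}s--Selfridge criterion --- by the \emph{potential-function} method, with Breaker playing greedily against a ``danger'' function.

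First I would set up the potential. During a play of the $(a:b)$ game, call $F\in\mathcal{F}$ \emph{alive} if Breaker has so far claimed none of its vertices; for an alive $F$ let $u(F)$ be the number of its vertices that are still unclaimed by anyone, and put $w(F):=(1+b)^{-u(F)/a}$, with $w(F):=0$ once $F$ is dead. Let $\Phi:=\sum_{F\in\mathcal{F}}w(F)$, and for an unclaimed vertex $z$ let $w(z):=\sum_{F\ni z}w(F)$ be its \emph{danger}. Two observations frame the whole argument: at the outset $\Phi=\sum_{F}(1+b)^{-|F|/a}$, which the hypothesis makes strictly less than $\tfrac{1}{1+b}$; and Maker wins only by claiming every vertex of some still-alive set $F$, at which instant $u(F)=0$, $w(F)=1$, and so $\Phi\ge 1$. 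Hence it suffices to equip Breaker with a strategy under which $\Phi$ never reaches $1$.

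Next I would pin down Breaker's strategy and the two estimates behind it. Breaker plays greedily: on each of his turns (Maker having just claimed his $a$ vertices) he claims, one after another, a currently-highest-danger unclaimed vertex, $b$ times, recomputing dangers after each pick. \emph{(i) A Maker move inflates $\Phi$ by a factor at most $1+b$.} If Maker claims $z_1,\dots,z_a$, then each alive $F$ has $u(F)$ drop by $k_F:=|F\cap\{z_1,\dots,z_a\}|$, so $w(F)$ gets multiplied by $(1+b)^{k_F/a}$, and since $t\mapsto(1+b)^t$ is convex with $k_F/a\in[0,1]$ this factor is at most $1+\tfrac{b\,k_F}{a}$; summing, $\Phi$ increases by at most $\tfrac{b}{a}\sum_{j}w(z_j)\le \tfrac{b}{a}\cdot a\cdot\Phi=b\,\Phi$. \emph{(ii) A greedy Breaker move cancels that inflation.} Each vertex Breaker claims kills every alive set through it and drops $\Phi$ by that vertex's current danger, and one shows that the $b$ greedily chosen vertices together carry at least as much danger mass as Maker's preceding move added, so $\Phi$ returns to below $\tfrac{1}{1+b}$ before Maker's next turn. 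An induction on rounds then finishes: $\Phi<\tfrac{1}{1+b}$ at the start of every round, hence $\Phi<1$ right after Maker's move (so no alive set has been completed --- Maker has not won), and Breaker's reply restores $\Phi<\tfrac{1}{1+b}$; therefore $\Phi<1$ throughout and Breaker wins.

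The step I expect to be the main obstacle is (ii) --- making precise how much danger a greedy Breaker move destroys. For the unbiased case $b=1$ this is essentially a one-line comparison, but for $b\ge 2$ several of Breaker's picks may sit inside a single alive set, so their danger contributions overlap and one must bound the removed mass pick by pick (for instance, lower-bounding the danger of the $j$-th pick against the potential left after the first $j-1$ picks). The one genuinely analytic ingredient is the convexity bound $(1+b)^{t}\le 1+bt$ for $t\in[0,1]$, which trades the multiplicative effect of Maker's $a$ simultaneous moves for an additive bound calibrated to the biases $a$ and $b$; everything else is bookkeeping and a routine check of the initial condition.
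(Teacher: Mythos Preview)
The paper does not prove this theorem; it is quoted as a known black box due to Beck (extending Erd\H{o}s--Selfridge), so there is no in-paper argument to compare your attempt against. Your potential-function framework is indeed the standard one, and your step~(i) is correct as written.

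There is, however, a genuine gap in step~(ii). Your claimed invariant---that Breaker's $b$ greedy picks always restore $\Phi<\tfrac{1}{1+b}$, equivalently that $\Phi$ is non-increasing over each Maker-first round---is false. Take $a=b=1$ and $\mathcal{F}=\bigl\{\{z,v_1,v_2\},\{z,v_3,v_4\},\{z,v_5,v_6\}\bigr\}$: here $\Phi_0=3\cdot 2^{-3}=\tfrac38<\tfrac12$, Maker claims $z$ to reach $\Phi=\tfrac34$, and Breaker's single greedy pick (any $v_i$, each of danger $\tfrac14$) only brings $\Phi$ down to $\tfrac12>\Phi_0$. So the potential can strictly increase over a Maker-first round, and your induction on the invariant $\Phi<\tfrac{1}{1+b}$ breaks at round~2. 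The difficulty is exactly the one you flagged but did not resolve: after Maker has moved, his claimed vertices are no longer available as comparison points for Breaker's greedy choice.

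The standard repair is to shift the round boundaries. Pair Breaker's reply in round~$k$ with Maker's move in round~$k{+}1$ and show that over each such \emph{Breaker-first} block the potential does not increase. This succeeds because during Breaker's turn all of Maker's forthcoming picks $z_1,\dots,z_a$ are still unclaimed, so each greedy Breaker pick $y_i$ has danger at least $w(z_j)$ (evaluated at that moment) for every~$j$; tracking the $\lambda=(1+b)^{1/a}$ factors then gives Maker's subsequent increase $\le(\lambda^a-1)\max_j w(z_j)=b\max_j w(z_j)\le\sum_{i=1}^b w(y_i)$, which is Breaker's preceding decrease. Together with your step~(i), which yields $\Phi<1$ immediately after Maker's very first move, this shows $\Phi<1$ right after \emph{every} Maker move, so no winning set is ever completed. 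The correct invariant to carry is therefore ``$\Phi<1$ just after Maker's move'', not ``$\Phi<\tfrac{1}{1+b}$ at the start of each round''.
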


\begin{proof}[Proof of \thmref{thm:makerbreaker}]
  Let~$n,d_0,d,b\in\mathbb{N}$ satisfy~$d\ge 3$ and $d_0\ge 15 b d\log n$. Let~$m:=m(n,d)$, let~$m_0:=m(n,d_0)$, and let~$G$ be an $(n,d_0)$-expander. Then we have $d_0\ge 10bd\log_2 n$ and also $m_0\le\frac{n}{20 b d \log_2 n}$ for sufficiently large~$n$. 

Since \thmref{thm:beck} only allows us to bound the bias for Breaker's win, we reformulate the $(1\,{:}\,b)$ Maker-Breaker $(n,d)$-expander game on~$G$ so that the roles of Maker and Breaker are reversed. For this, consider the hypergraph~$(E(G),\mathcal{F})$ where~$F\in\mathcal{F}$ if~$F$ consists of all edges of~$G$ between two disjoint sets~$X\subseteq V(G)$ and~$Y\subseteq V(G)$ such that either
\begin{enumerate}[(i)]
\item $1\le |X|< m$ and~$|Y|=n-\lceil(d+1)|X|\rceil+1$, or
\item $|X|=|Y|=m$.
\end{enumerate}

Consider the outcome of one game of the $(b\,{:}\,1)$ Maker-Breaker game on~$(E(G),\mathcal{F})$ and let~$H$ be the subgraph of~$G$ defined by the edges claimed by Breaker.

If Breaker wins, then~$H$ satisfies the conditions~(\ea) and~(\eb) in \defref{def:expander} and~$H$ is an $(n,d)$-expander. Thus, if we show that the generalized Erd\H{o}s-Selfridge criterion,
\[
\sum_{F\in \mathcal{F}}2^{-|F|/b}<\frac{1}{2},
\]
holds for~$\mathcal{F}$, then the $(b\,{:}\,1)$ Maker-Breaker game on~$(E(G),\mathcal{F})$ is \emph{Breaker's win} and therefore the $(1\,{:}\,b)$ Maker-Breaker $(n,d)$-expander game on~$G$ is \emph{Maker's win}. Also note that because of the exchanged roles Breaker starts the game. This, however, only strengthens the result.

For~$k,\ell\in\{1,\dots,n\}$, let~$\mathcal{F}(k,\ell)$ be the family of edge sets~$F\in\mathcal{F}$ such that~$F$ contains all edges in~$G$ between a set~$X$ of size~$k$ and a set~$Y$ of size~$\ell$. Then, for every~$F\in\mathcal{F}$, either $F\in\mathcal{F}\big(k,n-(d+1)k+1\big)$ holds for some~$k\in \{1,\dots,m\}$ or it holds that $F\in\mathcal{F}(m,m)$. Therefore,
\[
\sum_{F\in \mathcal{F}}2^{-|F|/b}=\sum_{k=1}^{m}\sum_{F\in\mathcal{F}(k,n-(d+1)k+1)}2^{-|F|/b}+\sum_{F\in\mathcal{F}(m,m)}2^{-|F|/b}.
\]

Let~$X$ and~$Y$ be two disjoint subsets of~$V(G)$ with $1\le |X|< m$ and $|Y|=n-(d+1)|X|+1$. First, suppose that~$|X|< m_0$. Since~$G$ is an $(n,d_0)$-expander, we have~$|N_G(X)|\ge d_0|X|$. Thus, since $d\ge 3$,
\[
e_G(X,Y)\ge |N_G(X)\cap Y|\ge d_0|X|-d|X|\ge b(d+2)|X|\log_2 n.
\]
Next, suppose that $m_0\le |X|< m$. Since~$d\ge 3$ and thus~$|Y|\ge \frac{n}{3}$, we have by \lemref{lem:density} that
\[
e_G(X,Y)\ge\frac{|X||Y|}{4m_0}\ge\frac{n}{12m_0}|X|\ge b(d+2)|X|\log_2 n.
\]
Thus, we have. for~$d\ge 3$,
\[
\sum_{k=1}^{m}\sum_{F\in\mathcal{F}(k,n-(d+1)k+1)}2^{-|F|/b}\le\sum_{k=1}^{m}\binom{n}{k}\binom{n}{dk}n^{-(d+2)k}\le 2n^{-1}.
\]

Finally, suppose that $|X|=|Y|=m$. Then, again by \lemref{lem:density}, we have
\[
e_G(X,Y)\ge\frac{|X||Y|}{4m_0}\ge\frac{m n}{8d m_0}\ge \frac{5}{2}b m\log_2 n.
\]
Therefore,
\[
\sum_{F\in\mathcal{F}(m,m)}2^{-|F|/b}\le\binom{n}{m}\binom{n}{m}n^{-5m/2}\le n^{-1/2}.
\]

Hence,
\[
\sum_{F\in\mathcal{F}}2^{-|F|/b}\le\frac{3}{n^{1/2}}<\frac{1}{2},
\]
and Beck's generalization of the Erd\H{o}s-Selfridge criterion is indeed satisfied.
\end{proof}

\section{Conclusion}
We have shown that, for sufficiently large~$n$, every $(n,8n^{2/3}\max\{\Delta,\log n\})$-expander is universal for the class of all $n$-vertex trees with maximum degree at most~$\Delta$. This implies that binomial random graphs and random regular graphs with sufficiently large (average) degree are a.a.s.\ $\mathcal{T}(n,\Delta)$-universal. Our result also leads to constructions of locally sparse $\mathcal{T}(n,\Delta)$-universal graphs. We have also discussed $\mathcal{T}(n,\Delta)$-universality in the setting of the Maker-Breaker game.

One major open problem is to establish the smallest value of~$p$ for which $G(n,p)$ becomes a.a.s.\ $\mathcal{T}(n,\Delta)$-universal. Here, our work leaves a substantial gap of~$n^{2/3}$ compared to the lower bound in~\cite{Kr10}. Also, it would be interesting to see why the corresponding lower bound for $(n,d)$-expanders in \thmref{thm:lower} differs so drastically from that in~\cite{Kr10} and to possibly find pseudo-random sufficient conditions which do not yield this discrepancy. In the spirit of \thmref{thm:sparse}, it would be nice to see constructions of tree-universal graphs which are triangle-free or even have large girth. Finally, although our embedding results are (for the most part) constructive, they do not give an efficient algorithm to find the embeddings. Here, an algorithmic version would be also desirable.

\bibliographystyle{amsplain}
\bibliography{abbreviations,proceedings,tree-universality}

\end{document}